\newtheorem{theorem}{Theorem}[section]
\newtheorem{proposition}[theorem]{Proposition}
\newtheorem{corollary}[theorem]{Corollary}
\newtheorem{lemma}[theorem]{Lemma}
\newtheorem{example}{Example}[section] %{\theorembodyfont{\rm}}
\newenvironment{proof}{\noindent \bf Proof
\rm}{\hspace*{\fill} $\Box $ \vskip7pt}
\title{Hopf algebra structure of incidence algebras}
\author{Dieter Denneberg, Universität Bremen \\{\small
denneberg@math.uni-bremen.de}}
\newcommand{\N}{\ensuremath{\mathbb{N}}}
\newcommand{\Prim}{\ensuremath{\mathbb{P}}}
\newcommand{\R}{\ensuremath{\mathbb{R}}}
\newcommand{\C}{\ensuremath{\mathbb{C}}}
\newcommand{\Quat}{\ensuremath{\mathbb{H}}}
\newcommand{\Zeta}{{\rm Z}}
\newcommand{\Mu}{{\rm M}}
\newcommand{\id}{\mbox{\rm id}}
\newcommand{\image}{\mbox{\rm image}}
\newcommand{\intv}{\mbox{\,\rm Intv}\,}
\newcommand{\Hom}{\mbox{\,\rm Hom}\,}
\newcommand{\Bilin}{\mbox{\,\rm Bilin}\,}
\begin{document}

\newcounter{reg1}

\maketitle

\begin{abstract}
The incidence algebra of a partially ordered set (poset) supports in a natural way also a coalgebra structure, so that it becomes a m-weak bialgebra even a m-weak Hopf algebra with Möbius function as antipode. Here m-weak means that multiplication and comultiplication are not required to be coalgebra- or algebra-morphisms, respectively. A rich theory is obtained in computing modulo an equivalence relation on the set of intervals in the poset.
\end{abstract}

\noindent
%\begin{keywords}
%MSC-class: 16T15, 16T30
%\end{keywords}

\section{Introduction}

Usually, the theory of algebras, coalgebras and bialgebras is formulated without reference to bases of the linear spaces supporting these structures. But in practice, some algebraic structures, especially comultiplication, is given by referring to specific bases. Incidence algebras of posets possess a canonical base, namely the set of intervals of the poset.

Many interesting incidence algebras are no bialgebras, multiplication and comultiplication are not compatible. Nevertheless they possess an antipode. So we introduce the issues of {\it m-weak bialgebra}\footnote{The {\it m} in {\it m-weak} refers to {\it m}ultiplication. In the literature (see \cite{Boehm}, \cite{Nill}) the term {\it weak bialgebra} is used for relaxing the conditions on unit and counit.
}
and {\it m-weak Hopf algebra} and look what remains valid from the theory of bialgebras and Hopf algebras.

It is supposed that the reader is familiar with tensor products of linear spaces. The elements of coalgebras and bialgebras are repeated in a concise form. In order to ease the access we avoid Sweedler's notation (\cite{Sweedler}) and confine the theory to linear spaces. Most results generalize to modules over a commutative ring. In the next section, Section 2, we fix our notations and present some generalizations and examples needed later on. Section 3 is devoted to the convolution algebra of a coalgebra-algebra pair. The more involved dual problem, to define a coalgebra for an algebra-coalgebra pair is treated in Section 4.
Except for Example \ref{AlgDual} Section 4 can be skipped at first reading. Section 5 contains the basics of m-weak bialgebras and m-weak Hopf algebras.

Then we turn to incidence algebras and their m-weak Hopf algebra structure. In Section 6 the interval algebra and interval coalgebra of a poset are defined w.r.t.\ a bialgebra compatible equivalence relation, a strengthening of Schmitt's issue of order compatibility (\cite{Schmitt}\footnote{Notice, that {\it incidence Hopf algebras} are no incidence algebras in the usual sense: the multiplication there is not convolution. But comultiplication is the same as in our paper.}). Section 7 gives the usual definition of incidence algebra of a poset and shows that in many cases it can be embedded in the convolution algebra of the pair interval coalgebra and interval algebra of the poset. For finite posets, the Möbius function is the antipode. Section 8 presents classical examples of commutative incidence algebras of infinite posets, which are (m-weak) Hopf algebras. Final Section 9 treats morphisms of incidence algebras.

The present paper is based on notes of my lecture "Hopf Algebren und Inzidenz\-algebren" in winter 2009/2010. The author expresses his thanks to Gleb Koshevoy and Hans Eberhard Porst for helpful discussions and hints.

%\newpage
\section{Algebras and Coalgebras}

Throughout let $K$ be a field. Applications between $K$-linear spaces are supposed to be linear. They are uniquely defined by their values on a base. We define linear applications mostly by their values on a specific base.

All $K$-algebras are supposed to be unitary. Then a $K${\bf -algebra} ${\cal A}$ is a $K$-linear space $A$ with multiplication $\cdot$ and unit $1=1_A$, in the classical notation a triple ${\cal A}=(A,\cdot,1)$ or, in the tensor product notation, a triple $(A,\triangledown,\eta)$ with linear applications
$$
\triangledown: A\otimes A \rightarrow A\;, \qquad \eta: K\rightarrow A\;,
$$
such that associativity and unitarity hold,
\begin{eqnarray*}
\label{ass}
\triangledown\circ (\triangledown\otimes\id_A) &=& \triangledown\circ(\id_A\otimes\triangledown) , \\
\label{unitarity}
 \triangledown\circ(\eta\otimes\id_A) &=& \id_A \;=\;   \triangledown\circ(\id_A\otimes\eta) \;.
\end{eqnarray*}

Reversing the arrows a {\bf coalgebra} is defined as a triple $\;{\cal C}=(C,\vartriangle,\epsilon)$ with linear applications called {\bf comultiplication} and {\bf counit}
$$
\vartriangle: C \rightarrow C\otimes C\;, \qquad \epsilon: C\rightarrow K\;,
$$
such that coassociativity (\ref{coass}) and counitarity (\ref{counitarity}) hold,
\begin{eqnarray}
\label{coass}
(\vartriangle\otimes\,\id_C) \,\circ \vartriangle &=& (\id_C\,\otimes\vartriangle) \, \circ \vartriangle\;, \\
\label{counitarity}
(\epsilon\otimes\id_C) \,\circ \vartriangle &=& \id_C \;=\;  (\id_C\otimes\epsilon) \,\circ \vartriangle\;.
\end{eqnarray}
By  (\ref{counitarity}) comultiplication is an injective linear mapping. This fact is dual to the fact that $\triangledown$ is surjective. first examples for coalgebras are collected at the end of this section.

An algebra is commutative iff $\triangledown\circ t=\triangledown$, where for a linear space $L$ the {\bf twist mapping} $t$ is defined as
\begin{eqnarray*}
t:L\otimes L \rightarrow L\otimes L\;,\qquad t(a\otimes b)= b\otimes a\,.
\end{eqnarray*}
$t$ is a linear isomorphism. A coalgebra is called {\bf cocommutative} if $\;t\,\circ \vartriangle=\vartriangle$. Generally the {\bf opposite algebra} $\;{\cal A}^{op}:=(A,\triangledown^{op},\eta)$ of the algebra $\,{\cal A}$ is defined by
$\triangledown^{op}:=\triangledown\circ t$; similarly the {\bf opposite coalgebra}$\;{\cal C}^{op}:=(C,\vartriangle^{op},\epsilon)$ of the coalgebra $\;{\cal C}$ by $\vartriangle^{op}:=\;t\,\circ \vartriangle$.

A linear application $\alpha: A \rightarrow A'$ between algebras is an algebra morphism iff
\begin{eqnarray*}
\triangledown'\circ (\alpha\otimes\alpha) \;=\; \alpha \circ \triangledown\,, \qquad
\eta' \;=\; \alpha \circ \eta\,.
\end{eqnarray*}
A linear application $\gamma: C \rightarrow C'$ between coalgebras is, by definition,  a {\bf coalgebra morphism} if
\begin{eqnarray*}
(\gamma\otimes\gamma)\,\circ \vartriangle \;=\; \vartriangle' \circ \gamma\,, \qquad \epsilon \;=\; \epsilon'\circ\gamma \,.
\end{eqnarray*}

The tensor product $(A\otimes A',\overline{\triangledown},\overline{\eta})$ of two algebras $(A,\triangledown,\eta)$ and $(A',\triangledown',\eta')$ is again an algebra with multiplication
$$
\overline{\triangledown}:= (\triangledown\otimes \triangledown') \circ T
$$
and unit
$$
\overline{\eta}:= \eta\otimes \eta'
$$
Generally, $\,T=T_{L,L'}$ denotes the {\bf capital twist mapping} for the linear spaces $L$, $L'$, defined by
\begin{eqnarray*}
T:
\begin{array}{ccc}
(L &\otimes& L') \\
 &\otimes&  \\
(L &\otimes& L')
\end{array}
\rightarrow
\left(
\begin{array}{ccc}
L \\
\otimes \\
L
\end{array} \right)
\otimes
\left(
\begin{array}{ccc}
L' \\
\otimes   \\
L'
\end{array} \right) \;, \quad
\begin{array}{lll}
(a &\otimes& a') \\
 &\otimes&  \\
(b &\otimes& b')
\end{array}
\mapsto
\left(
\begin{array}{cc}
a \\
\otimes \\
b
\end{array} \right)
\otimes
\left(
\begin{array}{cc}
a' \\
\otimes   \\
b'
\end{array} \right) \;.
\end{eqnarray*}
In the sequel we will omit the brackets. Clearly $T$ is a linear isomorphism and $T\circ T=\id$.

The tensor product $(C\otimes C',\underline{\vartriangle},\underline{\epsilon})$ of two coalgebras $(C,\vartriangle,\epsilon)$ and $(C',\vartriangle',\epsilon')$ is again a coalgebra with comultiplication
$$
\underline{\vartriangle}:= T\circ(\vartriangle\otimes \vartriangle')
$$
and counit
$$
\underline{\epsilon}:= \epsilon\otimes \epsilon'
$$

Overlining and underlining is compatible with passing to the opposite,
\begin{eqnarray}
\label{querop}
(\overline{\triangledown})^{op} = \overline{\triangledown^{op}}\,,\quad
(\underline{\vartriangle})^{op} = \underline{\vartriangle^{op}}\,.
\end{eqnarray}
The proof is straightforward using
\begin{eqnarray*}
T\circ t = (t\otimes t) \circ T \,,\quad t\circ T = T\circ (t\otimes t)\,.
\end{eqnarray*}

\begin{proposition}
\label{TensMorph}
\begin{enumerate}
\item
The tensor product of algebra or coalgebra morphisms is again an algebra or coalgebra morphism, respectively.
\item
For an algebra (coalgebra) the twist mapping $t$ is an algebra (coalgebra) morphism.
\item
For algebras (coalgebras) the capital twist mapping $T$ is an algebra (coalgebra) morphism.
\end{enumerate}
\end{proposition}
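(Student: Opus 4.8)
The plan is to treat all three parts as instances of one pattern: in each case a canonical linear map — the factorwise tensor of two morphisms, the twist $t$, or the capital twist $T$ — is claimed to respect the tensor-product (co)algebra structures introduced above, and I would verify this by checking the two defining conditions of a (co)algebra morphism separately, namely compatibility with multiplication (comultiplication) and with the unit (counit). Since all maps are linear and determined by their values on pure tensors, I would settle the multiplicative identities by evaluating both composites on a generic element such as $(a\otimes a')\otimes(b\otimes b')$, working from the explicit formulas $\overline{\triangledown}=(\triangledown\otimes\triangledown')\circ T$ and $\overline{\eta}=\eta\otimes\eta'$ together with their coalgebra duals $\underline{\vartriangle}=T\circ(\vartriangle\otimes\vartriangle')$ and $\underline{\epsilon}=\epsilon\otimes\epsilon'$.

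For part 1, given algebra morphisms $\alpha\colon A\to B$ and $\alpha'\colon A'\to B'$, the unit condition for $\alpha\otimes\alpha'$ is immediate from $\eta_B=\alpha\circ\eta_A$ and $\eta_{B'}=\alpha'\circ\eta_{A'}$. The multiplicative condition needs, beyond the morphism equations $\triangledown\circ(\alpha\otimes\alpha)=\alpha\circ\triangledown$ and its primed counterpart, only the naturality of the capital twist, i.e.\ that $T$ commutes with maps applied factorwise; combining these yields $\overline{\triangledown}\circ\big((\alpha\otimes\alpha')\otimes(\alpha\otimes\alpha')\big)=(\alpha\otimes\alpha')\circ\overline{\triangledown}$. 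The coalgebra half is the arrow-reversed statement and is verified the same way.

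For part 2, I would show that $t$ is a morphism of the tensor-square algebra $\mathcal A\otimes\mathcal A$ onto itself: the unit is preserved because the two factors coincide, and the identity $t\circ\overline{\triangledown}=\overline{\triangledown}\circ(t\otimes t)$ drops out on pure tensors, or abstractly from the relation $t\circ T=T\circ(t\otimes t)$ recorded before the proposition. Part 3 is the analogous verification for $T$ regarded as a map $(\mathcal A\otimes\mathcal A')\otimes(\mathcal A\otimes\mathcal A')\to(\mathcal A\otimes\mathcal A)\otimes(\mathcal A'\otimes\mathcal A')$, which again reduces to the coherence relations between the small and capital twists.

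The obstacle I anticipate is entirely bookkeeping, and it is concentrated in part 3: there are four tensor factors and several distinct twist maps in play — the small $t$, the capital $T$ on different pairs of spaces, and the twists induced on the regrouped products — so the real care lies in naming each space correctly and confirming that the identities $T\circ t=(t\otimes t)\circ T$ and $t\circ T=T\circ(t\otimes t)$ are applied to the intended factors. Once the algebra statements are in place, the coalgebra statements require no new idea: they follow by reversing every arrow and replacing $(\triangledown,\eta)$ by $(\vartriangle,\epsilon)$, so I would either invoke this duality or repeat the pure-tensor computation verbatim.
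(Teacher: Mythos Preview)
Your proposal is correct and matches the paper's own argument essentially point for point: the paper also handles only the algebra case and invokes duality for coalgebras, proves part~1 via the naturality relation $T\circ(\alpha\otimes\alpha)=((\alpha_1\otimes\alpha_1)\otimes(\alpha_2\otimes\alpha_2))\circ T$, calls part~2 ``straightforward'', and for part~3 spells out the required identity between the various twists while explicitly leaving the ``straightforward but tedious'' verification to the reader. The only difference is presentational---you phrase the twist compatibilities via the recorded relations $t\circ T=T\circ(t\otimes t)$ whereas the paper is slightly more terse---but the substance and the level of detail are the same.
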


\begin{proof}
We prove the algebra cases. The coalgebra cases prove dually.

1. Let $\alpha_i:A_i\rightarrow A_i'$ be algebra morphisms and $\alpha:=\alpha_1\otimes\alpha_2:A_1\otimes A_2\rightarrow A_1'\otimes A_2'$. We know $\eta_i' \;=\; \alpha_i \circ \eta_i$ and $\triangledown_i'\circ (\alpha_i\otimes\alpha_i) \;=\; \alpha_i \circ \triangledown_i$, whence $\eta_1'\otimes\eta_2' \;=\; (\alpha_1\otimes\alpha_2) \circ (\eta_1\otimes\eta_2)$ and $(\triangledown_i'\otimes\triangledown_2')\circ ((\alpha_1\otimes\alpha_1)\otimes(\alpha_2\otimes\alpha_2)) \;=\; (\alpha_1\otimes\alpha_2) \circ (\triangledown_1\otimes\triangledown_2)$. On the other hand $T\circ\alpha\otimes\alpha= ((\alpha_1\otimes\alpha_1)\otimes(\alpha_2\otimes\alpha_2)) \circ T$ so that $\triangledown'\circ (\alpha\otimes\alpha) \;=\; \alpha \circ \triangledown$, where $\triangledown:=(\triangledown_1\otimes\triangledown_2)\circ T$ and $\triangledown':=(\triangledown_1'\otimes\triangledown_2')\circ T$ denote the multiplications of $A_1\otimes A_2$ and $A_1'\otimes A_2'$, respectively.

2. The proof is straightforward.

3. Let $(A,\triangledown,\eta)$ and $(A',\triangledown',\eta')$ be algebras. We have to show that $T_{A,A'}:(\overline{A}\otimes\overline{A},(\overline{\triangledown}\otimes \overline{\triangledown})\circ T_{\overline{A},\overline{A}}\,,\overline{\eta}\otimes\overline{\eta}) \rightarrow (A_2\otimes A_2',(\triangledown_2\otimes \triangledown_2')\circ T_{A_2,A_2'}\,,\eta_2\otimes\eta_2')$ is an algebra morphism.
Here we have used the following algebras $(\overline{A},\overline{\triangledown},\overline{\eta}):=(A\otimes A',(\triangledown\otimes\triangledown')\circ T_{A,A'},\eta\otimes\eta')\,$,
$\;(A_2,\triangledown_2,\eta_2):=(A\otimes A,(\triangledown\otimes\triangledown)\circ T_{A,A},\eta\otimes\eta)$ and similarly $\;(A_2',\triangledown_2',\eta_2')$.

Clearly $T_{A,A'}$ respects the unit. The proof of the other condition
$$
((\triangledown_2\otimes \triangledown_2')\circ T_{A_2,A_2'})\circ (T_{A,A'}\otimes T_{A,A'})  \;=\; T_{A,A'}\circ ((\overline{\triangledown}\otimes \overline{\triangledown})\circ T_{\overline{A},\overline{A}})
$$
is straightforward but tedious. It is left to the reader.
\end{proof}

The Eckmann-Hilton argument is needed to decide, under which conditions $\triangledown$ is an algebra morphism.

\begin{theorem}
\label{EckmannHilton}
Let ${\cal A}=(A,\triangledown,\eta)$ and ${\cal A'}=(A,\triangledown',\eta')$ be $K$-algebras on the same linear space $A$. If $\triangledown':(A\otimes A,\overline{\triangledown},\overline{\eta}) \rightarrow (A,\triangledown,\eta)$ is an algebra morphism, then $\triangledown'=\triangledown$, $\eta'=\eta$ and ${\cal A}$ is commutative.
\end{theorem}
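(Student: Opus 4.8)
The plan is to recognize the statement as the classical Eckmann--Hilton argument and to reduce it to a single identity, the interchange law, evaluated on three carefully chosen tuples built from the two units. First I would unwind the hypothesis that $\triangledown'$ is an algebra morphism into its two defining equations. Writing the tensor square as $(A\otimes A,\overline{\triangledown},\overline{\eta})$ with $\overline{\triangledown}=(\triangledown\otimes\triangledown)\circ T$ and $\overline{\eta}=\eta\otimes\eta$, and abbreviating $a\triangledown b:=\triangledown(a\otimes b)$, $a\triangledown' b:=\triangledown'(a\otimes b)$, the multiplicative condition $\triangledown\circ(\triangledown'\otimes\triangledown')=\triangledown'\circ\overline{\triangledown}$ reads, on elements $a,b,c,d\in A$,
$$
(a\triangledown' b)\,\triangledown\,(c\triangledown' d)\;=\;(a\triangledown c)\,\triangledown'\,(b\triangledown d)\,,
$$
which is exactly the interchange law between the two multiplications. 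I would also record the two facts I really use: $1:=\eta(1)$ is a two-sided unit for $\triangledown$ and $1':=\eta'(1)$ is a two-sided unit for $\triangledown'$, both coming from ${\cal A}$ and ${\cal A}'$ being algebras. Note that associativity of neither product is needed.

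Then I would feed three special tuples into the interchange law. Substituting $(a,b,c,d)=(1,1',1',1)$ collapses the left-hand side, via the $\triangledown'$-unit $1'$ and then the $\triangledown$-unit $1$, to $1\triangledown 1=1$, and the right-hand side, via the $\triangledown$-unit and then the $\triangledown'$-unit, to $1'\triangledown'1'=1'$; hence $1=1'$, and since a linear map $K\rightarrow A$ is determined by its value at $1\in K$, also $\eta=\eta'$. With the common unit in hand, substituting $(a,b,c,d)=(a,1,1,d)$ yields $a\triangledown d$ on the left and $a\triangledown' d$ on the right, so $\triangledown=\triangledown'$. Finally $(a,b,c,d)=(1,b,c,1)$ yields $b\triangledown c$ on the left and $c\triangledown' b=c\triangledown b$ on the right, proving commutativity of ${\cal A}$.

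I expect no genuine difficulty in the argument itself; it is short and purely formal once the interchange law is isolated. The only place demanding care is the bookkeeping in the first step: the translation of the arrow-level morphism condition into the element-level interchange law must get the capital twist $T$ right, and one must keep scrupulous track of which unit, $1$ or $1'$, absorbs which factor in each of the four subproducts. A secondary point worth flagging is that the unit-preservation half of the morphism hypothesis, $\eta=\triangledown'\circ\overline{\eta}$, is never invoked: the conclusion $1=1'$ already follows from the interchange law together with the two unit axioms, so the theorem in fact holds under the weaker assumption that $\triangledown'$ merely respects $\triangledown$-multiplication.
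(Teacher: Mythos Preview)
Your proof is correct and is essentially the same Eckmann--Hilton argument as the paper's: both extract the interchange law from the morphism hypothesis and then evaluate it on special tuples built from the two units. The only cosmetic differences are that the paper carries out the last two steps diagrammatically rather than elementwise and orders them as ``$\triangledown'\circ t=\triangledown$, then $\triangledown'=\triangledown$'' (whence commutativity), whereas you do ``$\triangledown'=\triangledown$, then commutativity'' directly.
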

In fact this folk theorem holds for monoids (e.g.\ \cite{Porst_Vorl} 3.1) since the linear structure of $A$ is not needed in the proof. Even more, associativity is not needed, too. \\

\begin{proof}
First we show $\eta'=\eta$, i.e.\ $1_A'=1_A$ where $1_A:=\eta(1)$,  $1_A':=\eta'(1)$.
\begin{eqnarray*}
1_A \;=\; \triangledown(1_A\otimes 1_A)
&=& \triangledown(\triangledown'(1_A'\otimes 1_A)\otimes \triangledown'(1_A\otimes 1_A') \\
&=& \triangledown\circ(\triangledown'\otimes \triangledown')(1_A'\otimes 1_A) \otimes (1_A\otimes 1_A') \\
&=& \triangledown'\circ(\triangledown\otimes \triangledown)\circ T((1_A'\otimes 1_A) \otimes (1_A\otimes 1_A')) \\
&=& \triangledown'\circ(\triangledown\otimes \triangledown)(1_A'\otimes 1_A) \otimes (1_A\otimes 1_A') \\
&=& \triangledown'(1_A'\otimes 1_A') \;=\; 1_A'\,.
\end{eqnarray*}
In the second equation we used unitarity of $\triangledown'$ and in the fourth that $\triangledown'$ is an algebra morphism w.r.t.\ ${\cal A}$. The fifth equation holds since the capital twist $T$ does not change the argument.

Next we show $\triangledown'\circ t=\triangledown$ and then $\triangledown'=\triangledown$, which completes the proof. Within the diagram
\begin{center}
\begin{minipage}{6cm}
\xymatrix@=4em{
{\begin{array}{c}
A \\ \otimes \\ A
\end{array}}
 \ar[r]^{\sim}\ar[d]^t &
{\begin{array}{c}
K \otimes A \\
 \otimes  \\
A \otimes K
\end{array}}
\ar[r]^{\scriptsize{\begin{array}{c}
\eta \otimes \id \\
 \otimes  \\
\id \otimes \eta
\end{array}}}
\ar[d]^{T} &
{\begin{array}{c}
A \otimes A \\
 \otimes  \\
A \otimes A
\end{array}}
\ar[r]^{\scriptsize{\begin{array}{l}
\triangledown' \\ \otimes \\ \triangledown'
\end{array}}}
\ar[d]^{T} \ar[dr]^{\overline{\triangledown}}
 & {\begin{array}{c}
A \\ \otimes \\ A
\end{array}}
 \ar[dr]^{\triangledown}
 \\
{A\otimes A} \ar[r]^{\sim} &
{\begin{array}{c}
K \quad\; A \\
\otimes\otimes\otimes \\
A \quad\; K
\end{array}}
\ar[r]^{\scriptsize{\begin{array}{c}
\eta \quad\; \id \\
\otimes\otimes\otimes  \\
\id \quad\, \eta
\end{array}}} &
{\begin{array}{c}
A \quad\; A \\
\otimes\otimes\otimes \\
A \quad\; A
\end{array}}
\ar[r]^{\triangledown\otimes\triangledown}
 & {A\otimes A} \ar[r]^{\triangledown'} & A
 }
\end{minipage}
\end{center}
the right parallelogram commutes since $\triangledown'$ is an algebra morphism, the other cells commute trivially. Furthermore both horizontal compound arrows from $A\otimes A$ to $A\otimes A$ are the identity. So $\triangledown'\circ t=\triangledown$ is proved.
The remaining equation $\triangledown'=\triangledown$ proves similarly, replacing $t$ with $\id$ and interchanging $K$ with $A$ in the left capital twist $T$.
\end{proof}

\begin{corollary}
\label{element}
\begin{enumerate}
\item
The multiplication $\triangledown:{\cal A}\otimes {\cal A} \rightarrow {\cal A}$ of an algebra ${\cal A}$ is an algebra morphism if and only if ${\cal A}$ is a commutative algebra.
\item
The comultiplication $\vartriangle :{\cal C} \rightarrow {\cal C}\otimes {\cal C}$ of a coalgebra ${\cal C}$ is a coalgebra morphism if and only if $\,{\cal C}$ is cocommutative.
\end{enumerate}
\end{corollary}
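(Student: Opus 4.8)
The plan is to read both equivalences off the Eckmann--Hilton Theorem~\ref{EckmannHilton} together with a short direct check of the morphism equations, and then to obtain the coalgebra statement by dualization. I treat part~1 in full; part~2 follows by reversing every arrow. For the forward implication of part~1, suppose $\triangledown$ is an algebra morphism $(A\otimes A,\overline{\triangledown},\overline{\eta})\rightarrow(A,\triangledown,\eta)$. Here I would simply invoke Theorem~\ref{EckmannHilton} with the choice ${\cal A}'={\cal A}$, that is $\triangledown'=\triangledown$ and $\eta'=\eta$. With this choice the hypothesis of that theorem is literally the assumption that $\triangledown$ is an algebra morphism with respect to $\cal A$, and its conclusion then yields that $\cal A$ is commutative (the other conclusions $\triangledown'=\triangledown$, $\eta'=\eta$ being automatic under this identification). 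So the forward direction costs nothing beyond recognizing it as a degenerate instance of the folk theorem.

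For the converse, assume $\cal A$ is commutative and verify the two morphism conditions for $\triangledown$ directly. The unit condition $\eta=\triangledown\circ\overline{\eta}=\triangledown\circ(\eta\otimes\eta)$ holds in any unitary algebra, since it merely asserts $1_A\cdot 1_A=1_A$. The multiplicativity condition reads $\triangledown\circ(\triangledown\otimes\triangledown)=\triangledown\circ(\triangledown\otimes\triangledown)\circ T$; evaluating both sides on a decomposable element $(a\otimes b)\otimes(c\otimes d)$ and using associativity gives $abcd$ on the left and $acbd$ on the right, so the equation is equivalent to $bc=cb$ for all $b,c$ (take $a=d=1_A$), i.e.\ to commutativity. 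Hence commutativity delivers the multiplicativity square, and part~1 is complete.

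Part~2 is the formal dual: reversing arrows turns the unit condition into the automatically valid counit condition $\underline{\epsilon}\circ\vartriangle=\epsilon$ and turns the multiplicativity square into the comultiplicativity square $(\vartriangle\otimes\vartriangle)\circ\vartriangle=T\circ(\vartriangle\otimes\vartriangle)\circ\vartriangle$, which by the dual of the above forces $t\circ\vartriangle=\vartriangle$, that is cocommutativity. I do not expect a genuine obstacle anywhere in this proof; the only point demanding care is the bookkeeping of the capital twist $T$, since it is precisely $T$ that permutes the inner tensor factors and thereby converts the morphism identity into the commutativity (respectively cocommutativity) relation.
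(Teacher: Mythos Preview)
Your proposal is correct and follows essentially the same approach as the paper: the forward implication of part~1 is exactly the paper's application of Theorem~\ref{EckmannHilton} with $\triangledown'=\triangledown$, the converse is what the paper dismisses as ``well known and proves easily'' (you have simply written it out), and part~2 is obtained by duality in both treatments. Your explicit computation $abcd=acbd$ and the reduction via $a=d=1_A$ are the right bookkeeping for the capital twist, so there is nothing missing.
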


\begin{proof}
1. If $\triangledown$ is an algebra morphism, apply Theorem \ref{EckmannHilton} with $\triangledown'=\triangledown$ to see that ${\cal A}$ is commutative. The converse is well known and proves easily.
2. proves with the dual of Theorem \ref{EckmannHilton}.
\end{proof}

Often comultipication and counit of a coalgebra ${\cal C}=(C,\vartriangle,\epsilon)$ are defined by means of a specific base ${\cal B}$ of the linear space $C$.

\begin{example}
\label{diagonal0}
Let $C$ be a $K$-linear space and ${\cal B}$ a base of $C$. Define linear applications
\begin{eqnarray*}
\vartriangle_{\cal B} :C &\rightarrow& C \otimes C\,,\qquad \vartriangle_{\cal B}(b) := b\otimes b \quad \mbox{for } b\in {\cal B}\,, \\
\epsilon_{\cal B} :C &\rightarrow& K\;, \qquad \epsilon_{\cal B}(b):=1_K \quad \mbox{for } b\in {\cal B}\,.
\end{eqnarray*}
then $(C,\vartriangle_{\cal B},\epsilon_{\cal B})$ is a coalgebra.
\end{example}
A converse of Example \ref{diagonal0} holds.

\begin{proposition}
\label{BtB}
The set ${\cal B}:=\{ b\in C\mid \;\vartriangle(b)=b\otimes b,\, \epsilon(b)=1_K\}$ in a coalgebra $(C,\vartriangle,\epsilon)$ is linearly independent.
\end{proposition}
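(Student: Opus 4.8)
The elements of ${\cal B}$ are the \emph{group-like} elements of the coalgebra, and the claim is the standard fact that such elements are linearly independent. First I note that every $b\in{\cal B}$ is nonzero, since $\epsilon(b)=1_K\neq 0$; in particular a single element of ${\cal B}$ is already linearly independent. The plan is to argue by minimal counterexample: supposing ${\cal B}$ were linearly dependent, I would choose distinct elements $b_1,\ldots,b_n\in{\cal B}$ forming a linearly dependent set with $n$ as small as possible. Minimality forces $n\geq 2$ and makes $b_1,\ldots,b_{n-1}$ linearly independent, so I may write $b_n=\sum_{i=1}^{n-1}c_i b_i$ with $c_i\in K$.

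The heart of the argument is to evaluate comultiplication on this relation in two ways. On one hand, linearity of $\vartriangle$ together with $\vartriangle(b_i)=b_i\otimes b_i$ gives $\vartriangle(b_n)=\sum_{i=1}^{n-1}c_i\,(b_i\otimes b_i)$. On the other hand, $\vartriangle(b_n)=b_n\otimes b_n=\sum_{i,j=1}^{n-1}c_i c_j\,(b_i\otimes b_j)$. Equating the two expressions and using that the family $\{b_i\otimes b_j : 1\le i,j\le n-1\}$ is linearly independent in $C\otimes C$, I can compare coefficients: the diagonal terms yield $c_i=c_i^2$ and the off-diagonal terms yield $c_i c_j=0$ for $i\neq j$.

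These relations force each $c_i\in\{0,1\}$ and show that no two of the $c_i$ can be simultaneously nonzero, so at most one of them equals $1$ and the rest vanish. To pin it down I would apply the counit: $1_K=\epsilon(b_n)=\sum_{i=1}^{n-1}c_i\,\epsilon(b_i)=\sum_{i=1}^{n-1}c_i$, whence exactly one coefficient, say $c_k$, equals $1$ and the others are $0$. But then $b_n=b_k$, contradicting the assumption that $b_1,\ldots,b_n$ are distinct. Hence no such minimal dependent family exists and ${\cal B}$ is linearly independent.

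The one step requiring care, and the main obstacle, is the passage from linear independence of $b_1,\ldots,b_{n-1}$ in $C$ to linear independence of the tensors $b_i\otimes b_j$ in $C\otimes C$; this is precisely what licenses the coefficient comparison. It holds because a linearly independent family extends to a basis of $C$, and the pairwise tensor products of basis elements form a basis of $C\otimes C$, so the $b_i\otimes b_j$ are part of a basis. Everything else is a routine coefficient computation, and it is the minimality of $n$ that converts the two constraints coming from comultiplication and counit into the desired contradiction.
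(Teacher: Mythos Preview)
Your proof is correct and follows essentially the same minimal-counterexample argument as the paper: isolate one group-like element as a combination of a linearly independent family of the others, apply $\vartriangle$ to both sides, and compare coefficients in $C\otimes C$ to obtain $c_i^2=c_i$ and $c_ic_j=0$ for $i\neq j$, which forces a contradiction with distinctness. The only cosmetic difference is that the paper uses minimality to ensure all coefficients are nonzero (hence immediately $n-1=1$), whereas you instead invoke the counit to single out the unique nonzero coefficient; both routes are equally valid.
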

\begin{proof}
(from \cite{Street} Proposition 7.2) Assuming that ${\cal B}$ is linearly dependent, we have to derive a contradiction. Let $m$ be the smallest natural number such that there are pairwise different elements $b_0,b_1,\dots ,b_m\in {\cal B}$ which are linearly dependent. The minimal property implies $b\neq 0$ for all $b\in{\cal B}$, whence $m\ge 1$. The dependence relation $\sum_{i=0}^m \alpha_i b_i =0$ has all coefficients $\alpha_i \neq 0$ by the minimal property of $m$. We may assume $\alpha_0 =-1$ so that
$$
b_0 = \sum_{i=1}^m \alpha_i b_i\,,
$$
and $b_1,\dots ,b_m$ are linearly independent by the minimal property of $m$. Applying the linear application $\vartriangle$ we get
\begin{eqnarray*}
\sum_{i,j=1}^m \alpha_i \alpha_j \; b_i\otimes b_j \;=\; b_0\otimes b_0 \;=\; \vartriangle(b_0) &=& \sum_{i=1}^m \alpha_i \vartriangle(b_i)
\;=\; \sum_{i=1}^m \alpha_i \; b_i\otimes b_i \,.
\end{eqnarray*}
Observing that $\{b_i \otimes b_j\mid i,j=1,\dots,m\}$ is a linearly independent set of vectors, and comparing the coefficients, we see $\alpha_i \alpha_j = 0$ for $i\neq j$ and $\alpha_i^2=\alpha_i$.
But this can happen only if $m=1$ and $\alpha_1=1$, contradicting $b_0\neq b_1$.
\end{proof}

\begin{example}
\label{DecoposCo}
Let $C$ be a linear space with a base ${\cal B}=\{b_n \mid n\in\N_0 \}$ which is totally ordered by it's enumeration. Define for $n\in \N_0$
\begin{eqnarray*}
\vartriangle_{\cal B}(b_n)&:=& \sum_{k=0}^n b_k\otimes b_{n-k}\,, \\
\epsilon_{\cal B}(b_n) &:=& \left\{
\begin{array}{ll}
1 & \mbox{if } \quad n=0 \\
0 & \mbox{else}
\end{array} \right.,
\end{eqnarray*}
Then $(C,\vartriangle_{\cal B},\epsilon_{\cal B})$ is a coalgebra.
\end{example}

\begin{example}
\label{PolynCoalg}
We need characteristic $0$, so let $K$=$\C$.
The algebra $(\C[X],\cdot,1)=(\C[X],\triangledown,\eta)$ of polynomials has two natural bases with the corresponding natural coalgebra structures from Example \ref{DecoposCo}.

For more conveniant representation of applications we use the natural algebra isomorphism
\begin{eqnarray*}
\C[X]\otimes \C[X] \rightarrow \C[X,Y]\;, \qquad X^m\otimes X^n \mapsto X^mY^n\,
\end{eqnarray*}
and identify both spaces. Then the multiplication $\triangledown$ of the algebra of polynomials amounts to setting $Y=X$,
$$
\triangledown(h)(X)=h(X,X)\qquad \mbox{for } h\in \C[X,Y]\,.
$$
\begin{enumerate}
\item Base ${\cal B}_1=\{X^n \mid n\in\N_0 \}$ generates the coalgebra $(\C[X],\vartriangle_1,\epsilon_1)$ with
        $$
    \vartriangle_1(X^n) =\sum_{k=0}^n X^k Y^{n-k} \;.
    $$
\item Base ${\cal B}_2=\{\frac{X^n}{n!} \mid n\in\N_0 \}$ generates the coalgebra $(\C[X],\vartriangle_2,\epsilon_2)$. Since
    \begin{eqnarray}
    \label{PolynComult2}
    \vartriangle_2(X^n) =\sum_{k=0}^n {n\choose k} X^k Y^{n-k} = (X+Y)^n
    \end{eqnarray}
    this coalgebra is sometimes called {\bf binomial coalgebra}. By linearity of $\vartriangle_2$ we get
    $$
    \vartriangle_2(f)(X,Y) = f(X+Y)\qquad \mbox{for } f\in \C[X]\,.
    $$
\end{enumerate}
Both counits coincide, $\epsilon_1=\epsilon_2$, (see Example \ref{DecoposCo}) and
$$\epsilon_i(f)=f(0)\;, \qquad \mbox{for } f\in \C[X]\,.$$
\end{example}

The above examples being all cocommutative, we finally present a non-cocom\-mu\-tative one.

\begin{example}
\label{MatrixBialg}
Let $M_n$ be the linear space of square $n\times n$ matrices on the field $K$ and $\{B_{i,j}\mid i,j=1\dots n\}$ the standard base. Defining
\begin{eqnarray*}
\vartriangle(B_{i,j})&:=& \sum_{k=1}^n B_{i,k}\otimes B_{k,j}\,, \\
\epsilon(B_{i,j}) &:=& \left\{
\begin{array}{ll}
1 & \mbox{if } \quad i=j \\
0 & \mbox{else}
\end{array} \right.
\end{eqnarray*}
$(M_n,\vartriangle,\epsilon)$ becomes a coalgebra, which is not cocommutative. For an arbitrary matrix $A\in M_n$ the counit $\epsilon$ applies $A$ to the trace of $A$.
\end{example}

%\newpage
\section{Algebra structures on spaces of linear applications}
\label{AlgHom}

In this section we study the natural algebra structure on the space $H$ of linear applications from a coalgebra to an algebra.

\begin{theorem}
\label{convAlg}
Let ${\cal A}=(A,\triangledown,\eta)$ be an algebra and ${\cal C}=(C,\vartriangle,\epsilon)$ a coalgebra on a field $K$ and $H:=\Hom_K(C,A)$ the linear space of $K$-linear applications from $C$ to $A$. Define for $f$, $g\in H$ the product, called {\bf convolution},
\begin{eqnarray*}
f\star g &:=& \triangledown \circ\; (f\otimes g) \circ \vartriangle
\end{eqnarray*}
and the unit $$u:=\eta\circ\epsilon\,.$$ Then ${\cal H}({\cal C},{\cal A}):=(H,\star,u)$ is a $K$-algebra, called {\bf convolution algebra} of $({\cal C},{\cal A})$.

If $\,{\cal A}\,$ is commutative or $\;{\cal C}\;$ is cocommutative, then the convolution algebra ${\cal H}({\cal C},{\cal A})$ is commutative, too.
\end{theorem}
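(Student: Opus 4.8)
The plan is to verify the three algebra axioms for $(H,\star,u)$ directly from the definitions, then deduce commutativity under the stated hypotheses. First I would establish that $\star$ is associative. Given $f,g,h\in H$, I would expand $(f\star g)\star h$ and $f\star(g\star h)$ using the definition $f\star g=\triangledown\circ(f\otimes g)\circ\vartriangle$, obtaining expressions of the form $\triangledown\circ(\triangledown\otimes\id_A)\circ(f\otimes g\otimes h)\circ(\vartriangle\otimes\id_C)\circ\vartriangle$ and $\triangledown\circ(\id_A\otimes\triangledown)\circ(f\otimes g\otimes h)\circ(\id_C\otimes\vartriangle)\circ\vartriangle$. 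The associativity of $\triangledown$ makes the two leading factors agree and the coassociativity (\ref{coass}) of $\vartriangle$ makes the two trailing factors agree, so the two triple products coincide. The bookkeeping here is entirely functorial: one uses that tensoring is functorial, i.e.\ $(f\otimes g)\circ(\vartriangle\otimes\id_C)=(f\otimes g)\otimes\dots$ reassociates correctly, so that the middle map $f\otimes g\otimes h$ can be pulled out.

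Next I would check the unit axioms, namely $u\star f=f=f\star u$ with $u=\eta\circ\epsilon$. Writing $u\star f=\triangledown\circ((\eta\circ\epsilon)\otimes f)\circ\vartriangle$, I would factor $(\eta\circ\epsilon)\otimes f=(\eta\otimes\id_A)\circ(\epsilon\otimes f)$ and then $(\epsilon\otimes f)=(\id_K\otimes f)\circ(\epsilon\otimes\id_C)$, so that $u\star f=\triangledown\circ(\eta\otimes\id_A)\circ(\id_K\otimes f)\circ(\epsilon\otimes\id_C)\circ\vartriangle$. The counitarity (\ref{counitarity}) collapses $(\epsilon\otimes\id_C)\circ\vartriangle$ to $\id_C$ (after the canonical identification $K\otimes C\cong C$), and unitarity collapses $\triangledown\circ(\eta\otimes\id_A)$ to $\id_A$, leaving $u\star f=f$. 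The equation $f\star u=f$ is symmetric, using the right-hand versions of (\ref{counitarity}) and unitarity.

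Finally, for the commutativity claim, I would compare $g\star f$ with $f\star g$. Using the twist map $t$ one has $g\star f=\triangledown\circ(g\otimes f)\circ\vartriangle=\triangledown\circ t\circ(f\otimes g)\circ t\circ\vartriangle$, since $t\circ(f\otimes g)=(g\otimes f)\circ t$. If ${\cal A}$ is commutative then $\triangledown\circ t=\triangledown$, collapsing the leading twist; if ${\cal C}$ is cocommutative then $t\circ\vartriangle=\vartriangle$, collapsing the trailing twist. In either case $g\star f=\triangledown\circ(f\otimes g)\circ\vartriangle=f\star g$, so ${\cal H}({\cal C},{\cal A})$ is commutative.

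I expect the only real subtlety to be the careful handling of the canonical isomorphisms $K\otimes A\cong A\cong A\otimes K$ and the reassociation of iterated tensor products in the associativity step; these are routine but must be tracked so that $\triangledown$'s associativity and $\vartriangle$'s coassociativity are applied to the matching factors. The commutativity part is the easiest, being a one-line consequence of the intertwining relation $t\circ(f\otimes g)=(g\otimes f)\circ t$ together with the (co)commutativity hypothesis.
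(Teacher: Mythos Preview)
Your verification of associativity and of the unit laws is correct and is essentially the same argument as the paper's, which presents the same equalities as commuting diagrams; the content is identical.

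The gap is in your commutativity argument, and it is a real one. You correctly write
\[
g\star f \;=\; \triangledown\circ t\circ(f\otimes g)\circ t\circ\vartriangle,
\]
but then you claim that collapsing \emph{one} of the two twists yields $\triangledown\circ(f\otimes g)\circ\vartriangle$. It does not: if only ${\cal A}$ is commutative you are left with $\triangledown\circ(f\otimes g)\circ t\circ\vartriangle$, and if only ${\cal C}$ is cocommutative you are left with $\triangledown\circ t\circ(f\otimes g)\circ\vartriangle$. Neither of these is $f\star g$ in general. Your argument actually proves the correct statement, namely that ${\cal H}({\cal C},{\cal A})$ is commutative when ${\cal A}$ is commutative \emph{and} ${\cal C}$ is cocommutative.

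In fact the ``or'' version stated in the theorem is false: take ${\cal C}=(M_n,\vartriangle,\epsilon)$ the matrix coalgebra of Example~\ref{MatrixBialg} and ${\cal A}=K$, which is certainly commutative. Then ${\cal H}({\cal C},{\cal A})={\cal C}^*$ is, by Example~\ref{MatrixBialg2}, the matrix algebra $(M_n,\cdot,U)$, which is non-commutative for $n\ge 2$. The paper's own proof makes the same slip: in the chain $\triangledown\circ t\circ(f\otimes g)\circ\vartriangle = \triangledown\circ(g\otimes f)\circ\vartriangle$ a factor $t$ has been silently dropped, since $t\circ(f\otimes g)=(g\otimes f)\circ t$, not $(g\otimes f)$. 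So your instinct to introduce both twists was right; the conclusion simply requires both hypotheses, not just one.
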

The following commuting diagrams illustrate the convolution and the unit.
\begin{center}
\begin{minipage}{6cm}
\xymatrix@=3em{
{C}
\ar[r]^{f\star g}
\ar[d]^{\vartriangle}
& {A}
& C \ar[r]^{\epsilon}
\ar[dr]_{u}
& K \ar[d]^{\eta}
 \\
{C\otimes C}
\ar[r]^{f\otimes g}
& {A\otimes A} \ar[u]^{\triangledown}
&& A
 }
\end{minipage}
\end{center}

\begin{proof}
We first check unitarity regarding the following diagram.
\begin{center}
\begin{minipage}{6cm}
\xymatrix@=4em{
{C}
\ar[rrr]_{f\star u}
\ar[dr]^{\vartriangle}
\ar[ddd]^{\scriptsize{\id_C}}
&&& {A}
 \\
& {C\otimes C}
\ar[r]^{f\otimes u}
\ar[d]^{\scriptsize{\id_C}\otimes\epsilon} &
{A\otimes A} \ar[ur]^{\triangledown}
 \\
& {C \otimes K}
\ar[r]^{f\otimes\scriptsize{\id_K}}
& {A \otimes K}
\ar[u]^{\scriptsize{\id_A}\otimes\eta} \ar[dr]^{\sim}
 \\
{C}
\ar[rrr]^{f} \ar[ur]^{\sim}
&&&
{A}
\ar[uuu]^{\scriptsize{\id_A}}
 }
\end{minipage}
\end{center}
The definition of convolution implies that the upper cell commutes. The inner square  commutes by the definition of $u$. The left cell commutes, since $\epsilon$ is a counit, the right one since $\eta$ is a unit. Obviously, the lower cell commutes.
So, the outer square commutes as well, i.e. $f\star u=f$. The equation $u\star f=f$ proves similarly.

Next we check associativity.
\begin{center}
\begin{minipage}{6cm}
\xymatrix@=4em{
{C}
\ar[rrr]^{(f\star g)\star h}_{f\star (g\star h)}
\ar[dr]^{\vartriangle}
\ar[ddd]^{\vartriangle}
&&& {A}
 \\
& {C\otimes C}
\ar[r]^{{f\otimes (g\star h)}}
\ar[d]^{\scriptsize{\id_C}\otimes \vartriangle} &
{A\otimes A} \ar[ur]^{\triangledown}
 \\
& {C \otimes C\otimes C}
\ar[r]^{f\otimes(g\otimes h)}_{(f\otimes g)\otimes h}
& {A \otimes A\otimes A}
\ar[u]^{\scriptsize{\id_A}\otimes\triangledown} \ar[dr]^{\triangledown\otimes\scriptsize{\id_A}}
 \\
{C\otimes C}
\ar[rrr]^{(f\star g)\otimes h} \ar[ur]^{\vartriangle\otimes\scriptsize{\id_C}}
&&&
{A\otimes A}
\ar[uuu]^{\triangledown}
 }
\end{minipage}
\end{center}
The left cell commutes by coassociativity of $\vartriangle$, the right one by associativity of $\triangledown$. Associativity of the tensor product implies, that the upper and lower specification of the lower arrow of the central square coincide. Then, by definition of convolution the central square commutes and so does the lower cell. Again the definition of convolution implies that the outer large square commutes and also the upper cell (with the inner specification of the arrows). We showed that the two specifications of the upper arrow coincide, $(f \star g)\star h=f \star (g\star h)$.

For the last assertion on commutativity we know $\triangledown\circ t=\triangledown$, whence
$f\star g =\triangledown\circ(f\otimes g)\circ\vartriangle =
\triangledown\circ t\circ(f\otimes g)\circ\vartriangle =
\triangledown\circ(g\otimes f)\circ\vartriangle = g\star f$.
The cocommutative case runs similarly.
\end{proof}

\begin{example}
\label{CoalgDual}
For a coalgebra ${\cal C}=(C,\vartriangle,\epsilon)$ the dual $\,{\cal C}^*:=(C^*,\star\,,\epsilon)$ is an algebra with multiplication   $\vartriangle^*|_{C^*\otimes C^*}$ being the dual of $\vartriangle$ restricted to $C^*\otimes C^* \hookrightarrow (C\otimes C)^*$ and the counit $\epsilon$ of $\,{\cal C}$ being the unit of $\,{\cal C}^*$.
\end{example}

\begin{example}
\label{PowerSeries}
Applying the preceding example to the coalgebras of polynomials (Example \ref{PolynCoalg}) we get natural algebra isomorphisms
\begin{eqnarray*}
((\C[X],\vartriangle_1,\epsilon_1)^*,\star_1\,,1) &\rightarrow& (\C[[X]],\cdot\,,1) \,,\qquad f \mapsto \sum_{n=0}^\infty f(X^n) X^n\;, \\
((\C[X],\vartriangle_2,\epsilon_2)^*,\star_2\,,1) &\rightarrow& (\C[[X]],\cdot\,,1) \,,\qquad
f \mapsto \sum_{n=0}^\infty \frac{f(X^n)}{n!} X^n \;.
\end{eqnarray*}
Here $(\C[[X]],\cdot\,,1)$ denotes the algebra of formal power series.
\end{example}

\begin{example}
\label{MatrixBialg2}
The dual of the coalgebra $(M_n,\vartriangle,\epsilon)$ in Example \ref{MatrixBialg} is canonically isomorphic to the matrix algebra $(M_n,\cdot,U)$, $U$ denoting the $n\times n$ unit matrix. The isomorphism $(M_n,\cdot,U)\rightarrow (M_n,\vartriangle,\epsilon)^*$ maps $A=(a_{i,j})_{i,j=1,\dots, n}$ to the linear form given by $(B_{i,j}\mapsto a_{i,j})_{i,j=1,\dots, n}$, especially $U\mapsto \epsilon$.
\end{example}

Passing from pairs $({\cal C},{\cal A})$ to the convolution algebra ${\cal H}({\cal C},{\cal A})$ is compatible with morphisms, i.e.\ this is a bifunctor, contravariant in the coalgebras and covariant in the algebras.

\begin{proposition}
\label{ConvMorph}
Let ${\cal A},{\cal A}'$ be algebras, ${\cal C},{\cal C}'$ coalgebras and $\alpha:A\rightarrow A'$ an algebra morphism, $\gamma: C'\rightarrow C$ a coalgebra morphism, then
\begin{eqnarray*}
h: \Hom_K(C,A) \rightarrow \Hom_K(C',A')\;,\qquad f\mapsto \alpha\circ f\circ \gamma
\end{eqnarray*}
is an algebra morphism from the convolution algebra ${\cal H}({\cal C},{\cal A})$ to the convolution algebra ${\cal H}({\cal C}',{\cal A}')$.
\end{proposition}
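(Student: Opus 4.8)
The plan is to verify directly the two defining conditions for $h$ to be an algebra morphism, namely $h(u)=u'$ and $h(f\star g)=h(f)\star' h(g)$, where $u=\eta\circ\epsilon$ and $u'=\eta'\circ\epsilon'$ denote the units and $\star$, $\star'$ the convolutions of ${\cal H}({\cal C},{\cal A})$ and ${\cal H}({\cal C}',{\cal A}')$. The whole argument is a short diagram chase; the only point to watch is that $\gamma$ runs $C'\rightarrow C$, so the coalgebra-morphism identities are applied to $\gamma$ in this ``backward'' direction, which is exactly what realises the announced contravariance in the coalgebra variable and covariance in the algebra variable.

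First I would treat the unit. Unfolding the definitions gives $h(u)=\alpha\circ\eta\circ\epsilon\circ\gamma$. Since $\alpha$ is an algebra morphism we have $\alpha\circ\eta=\eta'$, and since $\gamma$ is a coalgebra morphism from $C'$ to $C$ we have $\epsilon\circ\gamma=\epsilon'$. Substituting yields $h(u)=\eta'\circ\epsilon'=u'$.

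For the product I would start from the right-hand side and use that the tensor product is a bifunctor on linear maps, so that
$$
h(f)\otimes h(g)=(\alpha\circ f\circ\gamma)\otimes(\alpha\circ g\circ\gamma)=(\alpha\otimes\alpha)\circ(f\otimes g)\circ(\gamma\otimes\gamma).
$$
By the definition of convolution this gives
$$
h(f)\star' h(g)=\triangledown'\circ(\alpha\otimes\alpha)\circ(f\otimes g)\circ(\gamma\otimes\gamma)\circ\vartriangle'.
$$
Then I would push the two morphism hypotheses in from the ends: $\triangledown'\circ(\alpha\otimes\alpha)=\alpha\circ\triangledown$ because $\alpha$ is an algebra morphism, and $(\gamma\otimes\gamma)\circ\vartriangle'=\vartriangle\circ\gamma$ because $\gamma$ is a coalgebra morphism. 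This turns the expression into $\alpha\circ\triangledown\circ(f\otimes g)\circ\vartriangle\circ\gamma=\alpha\circ(f\star g)\circ\gamma=h(f\star g)$, as required.

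There is no genuine obstacle here: once the tensor product is recognised as functorial on maps, the algebra condition on $\alpha$ and the coalgebra condition on $\gamma$ fit together exactly around the convolution, one acting on the $\triangledown$-side and the other on the $\vartriangle$-side. The only place where care is needed is respecting the direction of $\gamma$, since reversing it would break the matching of $(\gamma\otimes\gamma)\circ\vartriangle'$ with $\vartriangle\circ\gamma$; this is precisely the source of the mixed variance of the bifunctor ${\cal H}(-,-)$.
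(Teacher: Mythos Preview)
Your proof is correct and follows essentially the same approach as the paper: both verify $h(u)=u'$ via $\alpha\circ\eta=\eta'$ and $\epsilon\circ\gamma=\epsilon'$, and both derive $h(f\star g)=h(f)\star' h(g)$ by inserting the algebra-morphism identity $\triangledown'\circ(\alpha\otimes\alpha)=\alpha\circ\triangledown$ and the coalgebra-morphism identity $(\gamma\otimes\gamma)\circ\vartriangle'=\vartriangle\circ\gamma$ around the functorial decomposition of $h(f)\otimes h(g)$. The paper packages this as a commuting diagram while you write it equationally, but the logical content is identical.
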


\begin{proof}
We have to show
\begin{eqnarray*}
h(\varphi\star \psi)= h(\varphi) \star h(\psi)\,,\qquad h(u)=u'\,.
\end{eqnarray*}

First the unit is applied to the unit: We know $\alpha\circ\eta=\eta'$ since $\alpha$ is an algebra morphism, and $\epsilon\circ\gamma=\epsilon'$ since $\gamma$ is a coalgebra morphism and $u=\eta\circ\epsilon$. Then $h(u)=\alpha\circ\eta\circ\epsilon\circ\gamma=\eta'\circ \epsilon'= u'$.

Regard the following diagram.

\begin{center}
\begin{minipage}{6cm}
\xymatrix@=4em{
{C'}
\ar[rrr]_{h(\phi\star\psi)}^{h(\phi) \star h(\psi)}
\ar[dr]^{\gamma}
\ar[ddd]^{\vartriangle'}
&&& {A'}
 \\
& C
\ar[r]^{\phi\star\psi}
\ar[d]^{\vartriangle} &
A \ar[ur]^{\alpha}
 \\
& {C \otimes C}
\ar[r]^{\phi\otimes\psi}
& {A \otimes A}
\ar[u]^{\triangledown} \ar[dr]^{\alpha\otimes\alpha}
 \\
{C' \otimes C'}
\ar[rrr]^{h(\phi) \otimes h(\psi)} \ar[ur]^{\gamma\otimes\gamma}
&&&
{A' \otimes A'}
\ar[uuu]^{\triangledown'}
 }
\end{minipage}
\end{center}
The definition of convolution implies that the inner and outer squares commute. The upper and lower cells commute by the definition of $h$ and functoriality of the tensor product. The left cell commutes, since $\gamma$ is a coalgebra morphism, the right since $\alpha$ is an algebra morphism. So, the upper arrow equals $h(\varphi\star \psi)$ (upper cell) and equals also $h(\varphi) \star h(\psi)$ (outer square). We are done.
\end{proof}

The opposite of the convolution algebra is the convolution algebra of the opposites.

\begin{proposition}
\label{opConvol}
Let ${\cal A}$ be an algebra, ${\cal C}$ a coalgebra. Then
$${\cal H}^{op}({\cal C},{\cal A}) = {\cal H}({\cal C}^{op},{\cal A}^{op})\,.$$
\end{proposition}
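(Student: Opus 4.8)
The plan is to note that both sides are algebra structures on one and the same underlying space with the same unit, and then reduce the claimed equality to comparing the two multiplications by a short computation; the only tool needed is the naturality of the twist map together with $t\circ t=\id$.

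First I would record what is fixed and what changes. Passing to the opposite alters neither the unit $\eta$ of ${\cal A}$ nor the counit $\epsilon$ of ${\cal C}$, so the unit $u=\eta\circ\epsilon$ of ${\cal H}({\cal C}^{op},{\cal A}^{op})$ is literally the unit of ${\cal H}({\cal C},{\cal A})$, which in turn is the unit of ${\cal H}^{op}({\cal C},{\cal A})$. The underlying space $H=\Hom_K(C,A)$ is the same for all three. Hence everything comes down to showing that the convolution product $\star'$ of ${\cal H}({\cal C}^{op},{\cal A}^{op})$ equals the product $\star^{op}$ of ${\cal H}^{op}({\cal C},{\cal A})$, the latter being defined by $f\star^{op} g=g\star f$.

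For $f,g\in H$ I would substitute the definitions $\triangledown^{op}=\triangledown\circ t$ and $\vartriangle^{op}=t\circ\vartriangle$ into the convolution formula, obtaining
\[
f\star' g=\triangledown^{op}\circ(f\otimes g)\circ\vartriangle^{op}=\triangledown\circ t\circ(f\otimes g)\circ t\circ\vartriangle .
\]
The key step is the naturality identity $t\circ(f\otimes g)=(g\otimes f)\circ t$, which one checks on a basis tensor $a\otimes b$ since both sides send it to $g(b)\otimes f(a)$. Using this to move the left-hand $t$ past $f\otimes g$, the two copies of $t$ become adjacent and cancel by $t\circ t=\id$:
\[
f\star' g=\triangledown\circ(g\otimes f)\circ t\circ t\circ\vartriangle=\triangledown\circ(g\otimes f)\circ\vartriangle=g\star f=f\star^{op} g .
\]

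Thus $\star'=\star^{op}$ and the two algebras coincide. There is essentially no obstacle here: the only point demanding attention is to apply the naturality of $t$ in the correct order, so that the two twists meet and cancel rather than being left stranded on opposite sides of $f\otimes g$.
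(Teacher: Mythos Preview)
Your proof is correct and follows essentially the same route as the paper: both reduce the statement to the identity $\triangledown^{op}\circ(f\otimes g)\circ\vartriangle^{op}=\triangledown\circ(g\otimes f)\circ\vartriangle$, obtained from the naturality relation $t\circ(f\otimes g)=(g\otimes f)\circ t$ together with $t\circ t=\id$. The paper packages this same computation into a small commuting diagram and does not separately comment on the unit, which you handle explicitly.
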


\begin{proof}
Denote with $\star^{op}$ the convolution $f\star^{op} g = \triangledown^{op} \circ\; (f\otimes g) \circ \vartriangle^{op}$ of the convolution algebra ${\cal H}({\cal C}^{op},{\cal A}^{op})$. We have to show
$$f \star^{op} g = g \star f\,,\qquad f,g\in\Hom_K(C,A)\,.$$
But this is plain since the diagram
\begin{center}
\begin{minipage}{6cm}
\xymatrix@=3em{
{L}
\ar[r]^{f\;\star^{op}\; g}_{g \star\; f} \ar@/_ 1cm/[dd]_{\vartriangle^{op}}
\ar[d]^{\vartriangle}
& {L}
 \\
{L\otimes L} \ar[d]^t
\ar[r]^{g\otimes f}
& {L\otimes L}  \ar[u]^{\triangledown}
 \\
{L\otimes L}
\ar[r]^{f\otimes g}
& {L\otimes L} \ar[u]^{t} \ar@/_ 1cm/[uu]_{\triangledown^{op}}
 }
\end{minipage}
\end{center}
commutes.
\end{proof}

Convolution is compatible with the tensor product.

\begin{proposition}
\label{TensInv}
Let ${\cal A},{\cal A}'$ be algebras, ${\cal C},{\cal C}'$ coalgebras and let $f,g\in{\cal H}:=(\Hom_K(C,A),\star,u)$ and $f',g'\in {\cal H'}:=(\Hom_K(C',A'),\star,u')$ be applications in the convolution algebras. Then, in the convolution algebra $\underline{\cal H}:=(\Hom_K(C\otimes C',A\otimes A'),\star,u\otimes u')$, one has
\footnote{We use the same symbol $\star$ for the convolution in the three convolution algebras.}
$$
(f\otimes f')\star(g\otimes g')  = (f\star g)\otimes (f'\star g')\,.
$$
If $f$, $g$ are invertible for convolution, then their tensor product is invertible too, and
$$
(f\otimes f')^{\star -1} = f^{\star -1}\otimes f'^{\star -1}\,.
$$
\end{proposition}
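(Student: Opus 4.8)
The plan is to reduce the product formula to the naturality of the capital twist mapping $T$, after which the invertibility statement follows in one line. First I would unfold the convolution in $\underline{\cal H}$: by the definitions of the tensor product coalgebra and algebra from Section 2, $\underline{\vartriangle}=T\circ(\vartriangle\otimes\vartriangle')$ and $\overline{\triangledown}=(\triangledown\otimes\triangledown')\circ T$, so that
$$(f\otimes f')\star(g\otimes g')=(\triangledown\otimes\triangledown')\circ T\circ\big((f\otimes f')\otimes(g\otimes g')\big)\circ T\circ(\vartriangle\otimes\vartriangle').$$
The central ingredient is the naturality of the capital twist,
$$T\circ\big((f\otimes f')\otimes(g\otimes g')\big)=\big((f\otimes g)\otimes(f'\otimes g')\big)\circ T,$$
which is checked on a simple tensor $(a\otimes a')\otimes(b\otimes b')$, where both sides return $(f(a)\otimes g(b))\otimes(f'(a')\otimes g'(b'))$. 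This is the same bookkeeping that underlies Proposition \ref{TensMorph} together with the relations $T\circ t=(t\otimes t)\circ T$ recorded before it.

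Substituting this identity and using $T\circ T=\id$ to cancel the two surviving twists leaves $(\triangledown\otimes\triangledown')\circ\big((f\otimes g)\otimes(f'\otimes g')\big)\circ(\vartriangle\otimes\vartriangle')$. I would then apply the interchange law $(p\otimes q)\circ(r\otimes s)=(p\circ r)\otimes(q\circ s)$ for tensor products of linear maps twice, once to $\triangledown\otimes\triangledown'$ and once to $\vartriangle\otimes\vartriangle'$, to rewrite this as
$$\big(\triangledown\circ(f\otimes g)\circ\vartriangle\big)\otimes\big(\triangledown'\circ(f'\otimes g')\circ\vartriangle'\big)=(f\star g)\otimes(f'\star g'),$$
which is the first assertion.

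For the invertibility claim I would first record that the unit of $\underline{\cal H}$ really is $u\otimes u'$, since $\overline{\eta}\circ\underline{\epsilon}=(\eta\otimes\eta')\circ(\epsilon\otimes\epsilon')=(\eta\circ\epsilon)\otimes(\eta'\circ\epsilon')=u\otimes u'$. Assuming $f$ and $f'$ are convolution-invertible in ${\cal H}$ and ${\cal H}'$, I apply the product formula with $g:=f^{\star-1}$ and $g':={f'}^{\star-1}$ to get $(f\otimes f')\star(f^{\star-1}\otimes {f'}^{\star-1})=(f\star f^{\star-1})\otimes(f'\star {f'}^{\star-1})=u\otimes u'$, and symmetrically for the reversed product. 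Hence $f^{\star-1}\otimes {f'}^{\star-1}$ is the two-sided convolution inverse of $f\otimes f'$.

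The only delicate point — the main obstacle — is keeping the two instances of the capital twist (one acting on the $C$-factors, one on the $A$-factors) and their factor orderings straight while verifying the naturality square; once that square is in hand, the remainder is only the interchange law and the involutivity $T\circ T=\id$, so nothing genuinely hard intervenes. (I read the hypothesis ``$f,g$ invertible'' as ``$f,f'$ invertible'', which is what the stated conclusion requires.)
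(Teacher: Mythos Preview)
Your argument is correct and is essentially the same as the paper's proof, only written out equationally rather than as a diagram chase: the paper's ``trivially commuting'' cells are precisely the definitions $\underline{\vartriangle}=T\circ(\vartriangle\otimes\vartriangle')$, $\overline{\triangledown}=(\triangledown\otimes\triangledown')\circ T$, together with the naturality square $T\circ\big((f\otimes f')\otimes(g\otimes g')\big)=\big((f\otimes g)\otimes(f'\otimes g')\big)\circ T$ that you verify on simple tensors, and the paper likewise derives the invertibility by setting $g=f^{\star-1}$, $g'={f'}^{\star-1}$. Your reading of the hypothesis as ``$f,f'$ invertible'' is also the intended one.
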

\begin{proof}
Regard the following diagram.

\begin{center}
\begin{minipage}{6cm}
\xymatrix@=4em{
{C\otimes C'}
\ar[rrr]_{(f\star g)\otimes(f'\star g')}^{(f\otimes f')\star(g\otimes g')}
\ar[dr]^{\vartriangle\otimes\vartriangle'}
\ar[dd]^{\underline{\vartriangle}}
&&& {A\otimes A'}
 \\
& {\begin{array}{c}
C \quad\; C' \\
\otimes\otimes\otimes  \\
C \quad\; C'
\end{array}}
\ar[r]^{\scriptsize{\begin{array}{c}
f \quad\; f' \\
\otimes\otimes\otimes  \\
g \quad\; g'
\end{array}}}
\ar[dl]^{T} &
{\begin{array}{c}
A \quad\; A' \\
\otimes\otimes\otimes \\
A \quad\; A'
\end{array}}
\ar[ur]^{\triangledown\otimes\triangledown'}
 \\
{\begin{array}{c}
C \otimes C' \\
 \otimes  \\
C \otimes C'
\end{array}}
\ar[rrr]^{\scriptsize{\begin{array}{c}
f \otimes f' \\
 \otimes  \\
g \otimes g'
\end{array}}} &&&
{\begin{array}{c}
A \otimes A' \\
 \otimes  \\
A \otimes A'
\end{array}}
\ar[ul]^{T} \ar[uu]^{\overline{\triangledown}}
 }
\end{minipage}
\end{center}
The defining diagrams for $f\star g$ and $f'\star g'$ together imply that the upper cell with the inner applications commutes. The outer square with $(f\otimes f')\star(g\otimes g')$ on top commutes by definition of convolution in $\underline{\cal H}$. The other three cells commute trivially. So both applications on the top horizontal arrow coincide.

Setting $g=f^{\star-1}$ and $g'=f'^{\star-1}$ the last assertion derives directly from the first.
\end{proof}

%\newpage
\section{Coalgebra structures on spaces of linear applications}
\label{CoalgHom}

Analogous to the last section there is, in general, no natural coalgebra structure on the space of linear applications from an algebra to a coalgebra. The meager remaining result is Example \ref{dualofAlg}.
In this section we rather study coalgebra structures on subspaces of $H= \Hom_K(L,L')$. They require a preassigned base but need only very few structure on $L$, $L'$.
\begin{proposition}
\label{diagonal}
Let $L$ be a $K$-linear space endowed with a unit $\eta: K\rightarrow L$, $1_L:=\eta(1_K)$, and let $L'$ be a space with unit $\eta': K\rightarrow L'$ and counit $\epsilon': L'\rightarrow K$, and suppose $\epsilon'\circ\eta' = \id_K$. Let ${\cal B}\subset H:= \Hom_K(L,L')$ be a linear independent set with $b\circ\eta=\eta'$ for all $b\in{\cal B}$. The span $H_{\cal B}\subseteq H$ of ${\cal B}$ becomes a coalgebra $(H_{\cal B},\vartriangle_{\cal B},\epsilon_{\cal B})$ with
\begin{eqnarray*}
\vartriangle_{\cal B}(f) &:=& \sum_{b\in {\cal B}} x_b\; b\otimes b\,,\qquad f=\sum_{b\in {\cal B}} x_b\;b \in H_{\cal B}\;, \\
\epsilon_{\cal B}(f) &:=& \epsilon'\circ f \circ \eta(1_K) \;=\; \epsilon'\circ f(1_L)\,,\qquad f\in H_{\cal B}\;.
\end{eqnarray*}
\end{proposition}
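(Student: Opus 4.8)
The plan is to recognize that $(H_{\cal B},\vartriangle_{\cal B},\epsilon_{\cal B})$ is nothing but an instance of the coalgebra constructed in Example \ref{diagonal0}, applied to the linear space $H_{\cal B}$ with base ${\cal B}$. Since ${\cal B}$ is linearly independent and spans $H_{\cal B}$, it is a genuine base, so the coefficients $x_b$ in the expansion $f=\sum_{b\in{\cal B}} x_b\,b$ are uniquely determined; this is exactly what makes $\vartriangle_{\cal B}$ well defined. Once we verify that both the comultiplication and the counit of our proposition coincide, on the base ${\cal B}$, with those of Example \ref{diagonal0}, coassociativity (\ref{coass}) and counitarity (\ref{counitarity}) follow immediately from that example, and there is nothing further to prove.

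First I would check the comultiplication. Evaluating at a single base element $b\in{\cal B}$ means taking $f=b$, so that $x_b=1$ and all other coefficients vanish; hence $\vartriangle_{\cal B}(b)=b\otimes b$. This is precisely the diagonal comultiplication of Example \ref{diagonal0}. Second, and this is the only genuine computation, I would verify that $\epsilon_{\cal B}$ restricts on ${\cal B}$ to the constant counit $b\mapsto 1_K$ of Example \ref{diagonal0}. For $b\in{\cal B}$, using $1_L=\eta(1_K)$, the hypothesis $b\circ\eta=\eta'$, and the assumption $\epsilon'\circ\eta'=\id_K$, one computes
$$
\epsilon_{\cal B}(b)=\epsilon'\circ b(1_L)=\epsilon'\circ b\circ\eta(1_K)=\epsilon'\circ\eta'(1_K)=1_K\,.
$$
Since both $\epsilon_{\cal B}$ and the Example \ref{diagonal0} counit are linear and agree on the base ${\cal B}$, they are equal as maps on $H_{\cal B}$.

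With both structural maps identified with those of Example \ref{diagonal0}, I would conclude that $(H_{\cal B},\vartriangle_{\cal B},\epsilon_{\cal B})$ is a coalgebra. The main obstacle, if one can call it that, is entirely contained in the counit computation: the reduction to Example \ref{diagonal0} is transparent for the comultiplication, but the counit is defined intrinsically by $f\mapsto\epsilon'\circ f(1_L)$ rather than base-wise, so one must check that the three hypotheses on $\eta$, $\eta'$, $\epsilon'$, and ${\cal B}$ conspire to yield the value $1_K$ on each $b$. The condition $b\circ\eta=\eta'$ is exactly what transports the unit of $L$ to the unit of $L'$, and $\epsilon'\circ\eta'=\id_K$ then returns $1_K$; no further structure on $L$ or $L'$ is needed.
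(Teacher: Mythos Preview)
Your proposal is correct and follows essentially the same approach as the paper's own proof: both reduce the statement to Example~\ref{diagonal0} by checking that $\vartriangle_{\cal B}(b)=b\otimes b$ and, via the chain $\epsilon'\circ b\circ\eta(1_K)=\epsilon'\circ\eta'(1_K)=1_K$, that $\epsilon_{\cal B}(b)=1_K$ on base elements, then extend by linearity.
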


\begin{proof}
This coalgebra structure on the linear space $H_{\cal B}$ is in fact the coalgebra defined in Example \ref{diagonal0} by $\vartriangle_{\cal B}(b)=b\otimes b$ and $\epsilon_{\cal B}(b)=1_K$ for $b\in{\cal B}$.
The formula for $\vartriangle_{\cal B}$ in the proposition follows by linearity. For $b\in {\cal B}$ our assumptions imply
$\epsilon_{\cal B}(b) = 1_K \;=\; \epsilon'\circ \eta'(1_K) \;=\; \epsilon'\circ b \circ \eta(1_K) \;=\; \epsilon'\circ b(1_L)\,$. These equations extend by linearity to $f\in H_{\cal B}$.
\end{proof}

\noindent
Let $L$, $L'$ be linear spaces with unit $\eta$ respectively $\eta'$. A base of a subspace of the morphism space $\Hom_K(L,L')$ with the property $b\circ \eta = \eta'$ for all base elements $b$, will be called a {\bf unitary base}.

\begin{example}
\label{AlgDual}
Let ${\cal A}=(A,\triangledown,\eta)$ be an algebra structure on $L=A$ and set $L'=K$, then $\Hom_K(A,K)=A^*$ is the dual space of $A$. With the notations of Proposition \ref{diagonal} $1_A=\eta(1_K)$, $\eta'=\id_K$, $\epsilon'=\id_K$. We show that there exists a unitary base ${\cal B}$ of $A^*$, i.e.\ $b(1_A)=1_K,\; b\in{\cal B}$.

It is sufficient to show, that $H_1:=\{f\in A^* \mid f(1_A)=1_K \}$ spans $H$. Let $f\in H$. First, in case $f(1_A)\neq 0$ we get $h:=f/f(1_A) \in H_1$ and $f$ is a multiple of $h$. In the remaining case $f(1_A)= 0$ select a function $g\in H_1$ and set $h:=f+g$. Then $h(1_A)=f(1_A)+g(1_A)=1_K$, whence $h\in H_1$, too. So $f$ equals the difference $h-g$ of functions in $H_1$.

With such base $H_{\cal B}=A^*$ so that by Proposition \ref{diagonal}  $(A^*,\vartriangle_{\cal B},\epsilon_{\cal B})$ is a coalgebra and the counit is the evaluation
$\epsilon_{\cal B}(f)=f(1_A)$ of $f\in A^*$ at the unit of $A$.

In case $n=\dim_KA <\infty$ a unitary base of $A^*$ can be given explicitly.  Let $\{a_1,\dots,a_n\}$ be a base of $A$ with $1_A=\sum_{j=1}^n a_j$ (e.g.\ for the $\R$-algebra $\C$ take $a_1=(1+i)/2$, $a_2=(1-i)/2$). Then the dual base ${\cal B}=\{b_1,\dots,b_n\}$ of $A^*$ has the property $\;b_k(1_A)=b_k(a_k)=1_K,\; k=1,\dots,n\,$, i.e.\ ${\cal B}$ is a unitary base.
\end{example}

In Section \ref{AlgHom} we have seen that $\Hom_K(C,A)$ has a natural algebra structure. Here we ask: Is $\Hom_K(A,C)$ a coalgebra? As we know from Example \ref{AlgDual} this problem has no canonical solution, but it solves canonically w.r.t.\ a base. We start with some preparation.
The following well known lemma proves easily by means of the universal property of tensor products.

\begin{lemma}
\label{Rho}
For $K$-linear spaces $A$, $A'$, $C$ and $C'$ let
$$
H:=\Hom_K(A,C),\; H':=\Hom_K(A',C'),\; \underline{H}:=\Hom_K(A\otimes A',C\otimes C')
$$
Define the application
$$
\varrho : H\otimes H' \rightarrow \underline{H}\,,\qquad f{\otimes}f' \mapsto f\underline{\otimes} \,f'\;,
$$
where $f\underline{\otimes} \,f'$ denotes the tensor product of the linear applications $f$, $f'$, whereas $f{\otimes}f'$ denotes the tensor product of elements of the spaces $H$ and $H'$.
Then $\varrho$ is a linear monomorphism and an isomorphism if the spaces have finite dimensions.

Specializing to the dual spaces, i.e.\ $C$, $C' =K$, we get
\begin{eqnarray*}
\varrho : A^*\otimes A'^* \rightarrow (A\otimes A')^*\,,\qquad f{\otimes} f' \mapsto f\underline{\otimes} \,f'\;, \\
f\underline{\otimes} \,f'(a\otimes a')=f(a)f'(a')\,,\qquad a\in A,\, a'\in A'\,.
\end{eqnarray*}
\end{lemma}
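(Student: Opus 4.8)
The plan is to construct $\varrho$ from the universal property of the tensor product, verify injectivity by a separation-of-points argument, and finally upgrade to an isomorphism in the finite-dimensional case by a dimension count.

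First I would observe that the assignment $(f,f')\mapsto f\underline{\otimes}f'$ is a bilinear map $H\times H'\to\underline{H}$: for fixed $f'$ it is linear in $f$ and symmetrically in $f'$, since $(f_1+f_2)\underline{\otimes}f'$ and $f_1\underline{\otimes}f'+f_2\underline{\otimes}f'$ agree on every elementary tensor $a\otimes a'$, and such tensors span $A\otimes A'$. The universal property then furnishes a unique linear map $\varrho:H\otimes H'\to\underline{H}$ with $\varrho(f\otimes f')=f\underline{\otimes}f'$, establishing that $\varrho$ is well defined and linear.

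For injectivity I would take $\xi\in H\otimes H'$ with $\varrho(\xi)=0$ and write $\xi=\sum_{i=1}^n f_i\otimes f_i'$ with $f_1',\dots,f_n'$ linearly independent in $H'$, which is always achievable by expressing the right-hand factors through a basis of their span. Then $\varrho(\xi)(a\otimes a')=\sum_i f_i(a)\otimes f_i'(a')=0$ for all $a\in A$, $a'\in A'$. Fixing $a$ and applying an arbitrary functional $\lambda\in C^*$ to the first tensor slot gives $\sum_i \lambda(f_i(a))\,f_i'(a')=0$ for all $a'$, hence $\sum_i \lambda(f_i(a))\,f_i'=0$ in $H'$; linear independence forces $\lambda(f_i(a))=0$ for every $i$. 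Since $\lambda\in C^*$ and $a\in A$ are arbitrary and $C^*$ separates the points of $C$, each $f_i=0$ and thus $\xi=0$. This is the heart of the argument and essentially the only place demanding care; the rest is formal.

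For the finite-dimensional case I would simply count dimensions: $\dim(H\otimes H')=(\dim A\cdot\dim C)(\dim A'\cdot\dim C')$ coincides with $\dim\underline{H}=(\dim A\cdot\dim A')(\dim C\cdot\dim C')$, so the injective map $\varrho$ between spaces of equal finite dimension is automatically surjective, hence an isomorphism. Finally, the dual specialization is the instance $C=C'=K$: the canonical identification $K\otimes K\cong K$ turns $\underline{H}$ into $(A\otimes A')^*$, and $f\underline{\otimes}f'(a\otimes a')=f(a)\otimes f'(a')$ collapses to the scalar $f(a)f'(a')$, so the two displayed formulas drop out of the general statement without further work.
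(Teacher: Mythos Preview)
Your proof is correct and follows essentially the same outline as the paper: construct $\varrho$ from the universal property of the tensor product, establish injectivity, count dimensions in the finite-dimensional case, and specialize to $C=C'=K$ via $K\otimes K\cong K$. The only noteworthy difference is in the injectivity step: the paper argues tersely via the factorization $H\times H'\hookrightarrow\Bilin_K(A\times A',C\otimes C')\hookrightarrow\underline{H}$, whereas you give the more explicit (and more transparently complete) argument of choosing the $f_i'$ linearly independent and separating points of $C$ with linear functionals.
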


\begin{proof}
By the universal property of the tensor product we get bilinear injections
\begin{eqnarray*}
H\times H' \hookrightarrow \Bilin_K(A\times A',C\otimes C')\hookrightarrow \underline{H}\;
\end{eqnarray*}
and $\varrho$ is the corresponding linear application.
Now, if $\varrho(\sum f_i \otimes f_i')=0$, i.e. $\sum f_i \underline{\otimes} f_i'=0$ then $\sum f_i \times f_i'=0$. It's image $\sum f_i \otimes f_i'$ under the canonical bilinear application $H\times H' \rightarrow H\otimes H'$ equals zero, too.

In the finite dimensional case one easily proves $\dim_K H\otimes H' = \dim_K \underline{H}<\infty$, whence $\varrho$ is an isomorphism.

In case of the dual spaces we get $f\underline{\otimes} \,f'(a\otimes a')=f(a)\otimes f'(a')=f(a)f'(a')$ by means of the canonical isomorphism $K\otimes K\simeq K$.
\end{proof}

\begin{example}
\label{dualofAlg}
Let ${\cal A}=(A,\triangledown,\eta)$ be a finite dimensional $K$-algebra.
With the natural isomorphism $\varrho:A^*\otimes A^*\rightarrow (A\otimes A)^*$ from Lemma \ref{Rho} and the dual $\triangledown^*:A^*\rightarrow (A\otimes A)^*$ of the multiplication of ${\cal A}$ we build the application
$$
\vartriangle:=\varrho^{-1}\circ\triangledown^* : A^*\rightarrow A^*\otimes A^* \,.
$$
Then $(A^*,\vartriangle,\varepsilon)$ is a coalgebra, where
$$
\varepsilon(f):=f(1_A)\,,\qquad f\in A^*\,.
$$
\end{example}
How to get explicit formulas for the comultiplication, is shown with further specializing the last example.

\begin{example}
\label{ComplexStar}
We take the
$\R$-algebra $(\C,\triangledown,\eta) = (\C,\,\cdot\,,1)$
of complex numbers. On the dual space
$\,\C^* = \Hom_{\R}(\C,\R)\,$ let $\{h_1,h_2\}$ be the dual base of the standard base $\{1,i\}$ of $\,\C$.  Then
$$
\varepsilon(h_1)=1\,,\qquad \varepsilon(h_2)=0\,.
$$
We write complex numbers as $a=a_1+ia_2$, $a_1,a_2\in\R$. Then we get using $\,h_1\underline{\otimes}h_2(a\otimes b)=a_1\otimes b_2=a_1b_2\,$ etc.\
\begin{eqnarray*}
\triangledown^*(h_1)(a\otimes b) =h_1(ab)=a_1b_1-a_2b_2=(h_1\underline{\otimes}h_1 - h_2\underline{\otimes}h_2)(a\otimes b)\,, \\
\triangledown^*(h_2)(a\otimes b) =h_2(ab)=a_1b_2+a_2b_1=(h_1\underline{\otimes}h_2 + h_2\underline{\otimes}h_1)(a\otimes b)
\end{eqnarray*}
and, applying $\varrho^{-1}$,
\begin{eqnarray*}
\vartriangle(h_1)=\varrho^{-1}\circ\triangledown^*(h_1)=h_1{\otimes}h_1 - h_2{\otimes}h_2\,, \\
\vartriangle(h_2)=\varrho^{-1}\circ\triangledown^*(h_2)=h_1{\otimes}h_2 + h_2{\otimes}h_1\,.
\end{eqnarray*}
\end{example}
With the $\R$-linear isomorphism $h_1 \mapsto 1$, $h_2\mapsto i$ between $\C^*$ and $\C$ we get

\begin{example}
\label{Complex}
The
$\R$-linear space $\C$ becomes a coalgebra $(\C,\vartriangle,\varepsilon)$ setting
\begin{eqnarray*}
\vartriangle(1)=1\otimes 1 - i\otimes i\,, &\qquad&
\varepsilon(1)=1\,, \\
\vartriangle(i)=1\otimes i + i\otimes 1\,,
&\qquad& \varepsilon(i)=0\,.
\end{eqnarray*}
\end{example}

\begin{proposition}
\label{split}
Let ${\cal A}=(A,\triangledown,\eta)$ and  ${\cal A}'=(A',\triangledown',\eta')$ be algebras and ${\cal C}=(C,\vartriangle,\epsilon)$ and ${\cal C}'=(C',\vartriangle',\epsilon')$ coalgebras. With the notations from Lemma \ref{Rho} define the {\bf (non-linear) split mapping}
$$
s: \underline{H} \rightarrow H\otimes H' \,, \qquad F \mapsto F_1 \otimes F_2\,,
$$
where the linear applications $F_1: A\rightarrow C$, $F_2:A'\rightarrow C'$ are defined by
\begin{eqnarray*}
F_1(x)&:=& (\id_C \otimes \epsilon')\circ F(x\otimes 1_{A'})\,,\quad \mbox{for } x\in A \,, \\
F_2(x')&:=& (\epsilon \otimes \id_{C'})\circ F(1_A\otimes x')\,,\quad \mbox{for } x'\in A'\,.
\end{eqnarray*}
Then $s$ is quadratic, $s(k F)=k^2 s(F)$, $k\in K$, and
\begin{eqnarray}
\label{scircrho}
s(f\underline{\otimes} f') = \epsilon\circ f(1_A)\cdot \epsilon'\circ f'(1_{A'}) \;\;f{\otimes}f'\,, \qquad f\in H, f'\in H'\;.
\end{eqnarray}
\end{proposition}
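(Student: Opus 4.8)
The plan is to verify the two stated properties of the split mapping $s$ directly from the definitions of $F_1$ and $F_2$. The quadratic scaling property is almost immediate, so I would dispatch it first and then concentrate on the substantive formula \eqref{scircrho}, which describes how $s$ interacts with the monomorphism $\varrho$ from Lemma \ref{Rho}.

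For the scaling property, observe that both $F_1$ and $F_2$ are defined by applying $F$ (composed with fixed linear maps $\id_C\otimes\epsilon'$ or $\epsilon\otimes\id_{C'}$ and precomposed with fixed insertions $x\mapsto x\otimes 1_{A'}$, $x'\mapsto 1_A\otimes x'$). These operations are all linear in $F$, so replacing $F$ by $kF$ replaces $F_1$ by $kF_1$ and $F_2$ by $kF_2$. Since $s(F)=F_1\otimes F_2$ lives in the \emph{tensor product} $H\otimes H'$, the scalar appears twice: $s(kF)=(kF_1)\otimes(kF_2)=k^2\,F_1\otimes F_2=k^2 s(F)$. This is the reason $s$ is only quadratic and not linear.

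For the main formula, I would substitute $F=f\underline{\otimes}f'$ and compute its two components $F_1$ and $F_2$ explicitly. Writing $F(x\otimes 1_{A'})=f(x)\otimes f'(1_{A'})$ and applying $\id_C\otimes\epsilon'$ gives $F_1(x)=f(x)\cdot\bigl(\epsilon'\circ f'(1_{A'})\bigr)$, using the canonical identification $C\otimes K\simeq C$; hence $F_1=\bigl(\epsilon'\circ f'(1_{A'})\bigr)\,f$ as an element of $H$, since the bracketed quantity is a scalar. Symmetrically $F_2=\bigl(\epsilon\circ f(1_A)\bigr)\,f'$. Taking the tensor product in $H\otimes H'$ and pulling both scalar factors out front yields $s(f\underline{\otimes}f')=\bigl(\epsilon\circ f(1_A)\bigr)\bigl(\epsilon'\circ f'(1_{A'})\bigr)\,f\otimes f'$, which is exactly \eqref{scircrho}. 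The order of the two scalars is irrelevant since $K$ is commutative.

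\textbf{The main obstacle} will be purely bookkeeping: keeping straight the two distinct tensor symbols $\otimes$ (in $H\otimes H'$) and $\underline{\otimes}$ (the tensor of linear maps in $\underline{H}$) as in Lemma \ref{Rho}, and correctly invoking the identifications $C\otimes K\simeq C$ and $K\otimes C'\simeq C'$ when the counits $\epsilon'$, $\epsilon$ collapse one factor to a scalar. No deep structural input is needed — coassociativity, counitarity, and associativity of $\triangledown$ play no role here; only the bilinearity built into $\varrho$ and the fact that a scalar pulled out of one tensor factor can be moved to the front. I expect the entire argument to be a short, direct computation.
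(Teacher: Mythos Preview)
Your proposal is correct and follows essentially the same direct computation as the paper. The only cosmetic difference is that the paper verifies \eqref{scircrho} by applying the injective map $\varrho$ to both sides and evaluating on $x\otimes x'$, whereas you compute $F_1$ and $F_2$ as scalar multiples of $f$ and $f'$ and then tensor; the underlying steps (using $F(x\otimes 1_{A'})=f(x)\otimes f'(1_{A'})$ and the identifications $C\otimes K\simeq C$, $K\otimes C'\simeq C'$) are identical.
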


\begin{proof}
First $s(k F)=(kF_1)\otimes (kF_2) = k^2 F_1\otimes F_2 = k^2 s(F)$.

We set $F:= \varrho(f\otimes f')=f\underline{\otimes} f'$. For $x\in A$, $x'\in A'$ we then have  $F(x\otimes x')=f(x)\otimes f'(x')$. Since $\varrho$ is injective it is sufficient to prove, that both sides of equation (\ref{scircrho}) have equal images under $\varrho$.
\begin{eqnarray*}
\varrho\circ s(f\underline{\otimes} f')(x\otimes x') &=& F_1\,\underline{\otimes}\, F_2\;(x\otimes x') \\
&=& (\id_C \otimes \epsilon')\circ F(x\otimes 1_{A'}) \otimes (\epsilon \otimes \id_{C'})\circ F(1_A\otimes x') \\
&=& (f(x)\otimes \epsilon'(f'(1_{A'})) \otimes (\epsilon(f(1_{A})\otimes f'(x')) \\
&=& \epsilon'(f'(1_{A'})) \cdot \epsilon(f(1_{A}))\;\; f\underline{\otimes} f'\; (x\otimes x')  \\
&=& \varrho\circ (\epsilon'(f'(1_{A'})) \cdot \epsilon(f(1_{A}))\;\; f\otimes f')\, (x\otimes x')  \,.
\end{eqnarray*}
For the fourth equality we used the canonical isomorphisms $C\otimes K\simeq C$, $K\otimes C\simeq C$.
Since these equations hold for all $x\otimes x' \in A\otimes A'$, we are done.\footnote{Despite $s$ not being linear, be aware that $\varrho\circ s(F)$ ia a linear application.}
\end{proof}
\noindent
In (\ref{scircrho}) the constant factor on the right hand side can be zero. We are looking for conditions, such that this factor becomes $1$.

\begin{corollary}
\label{affineBase}
Under the assumptions of Proposition \ref{split} suppose that the underlying spaces of ${\cal A}$ and ${\cal C}$ are identical, $A=C $, and that counit and unit are compatible,
\begin{eqnarray*}
\epsilon\circ\eta = \id_K\,. \qquad \epsilon'\circ\eta' = \id_K\,.
\end{eqnarray*}
Regard the affine spaces
\begin{eqnarray*}
H_1&:=&\{f\in H\mid f(1_A)=1_A \}\,,\qquad H'_1\quad :=\quad \{f'\in H'\mid f'(1_{A'})=1_{A'} \}\,
\\ \underline{H}_1&:=&\{F\in \underline{H}\mid F(1_{A\otimes A'})=1_{A\otimes A'}\}
\end{eqnarray*}
of linear mappings respecting the unit. Then
$$
s({\underline{H}_1}) \subseteq H_1\otimes H_1'\;
$$
$$
s\circ \varrho\,(f\otimes f')  = s\,(f\underline{\otimes}f') = f\otimes f'\;,\qquad f\in H_1, f'\in H_1'\;.
$$
\end{corollary}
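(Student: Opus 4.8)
The plan is to treat the two assertions separately, in each case reducing everything to the compatibility conditions $\epsilon\circ\eta=\id_K$ and $\epsilon'\circ\eta'=\id_K$, which say precisely that $\epsilon(1_A)=1_K$ and $\epsilon'(1_{A'})=1_K$. At the outset I would also record that the unit of the tensor product algebra is $1_{A\otimes A'}=1_A\otimes 1_{A'}$, since $\overline{\eta}=\eta\otimes\eta'$, so that the defining condition of $\underline{H}_1$ reads $F(1_A\otimes 1_{A'})=1_A\otimes 1_{A'}$.

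For the inclusion $s(\underline{H}_1)\subseteq H_1\otimes H_1'$, I would take $F\in\underline{H}_1$ and compute the two components $F_1,F_2$ of $s(F)$ at the respective units directly from their definitions, since this part concerns arbitrary $F$ and cannot go through (\ref{scircrho}). Evaluating $F_1$ at $1_A$ gives $F_1(1_A)=(\id_C\otimes\epsilon')\circ F(1_A\otimes 1_{A'})=(\id_C\otimes\epsilon')(1_A\otimes 1_{A'})$, and the canonical isomorphism $C\otimes K\simeq C$ together with $\epsilon'(1_{A'})=1_K$ turns this into $1_A$; thus $F_1\in H_1$. The mirror-image computation, using $\epsilon(1_A)=1_K$ and $K\otimes C'\simeq C'$, yields $F_2(1_{A'})=1_{A'}$, so $F_2\in H_1'$. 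Hence $s(F)=F_1\otimes F_2\in H_1\otimes H_1'$.

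For the identity $s\circ\varrho(f\otimes f')=s(f\underline{\otimes}f')=f\otimes f'$, the work is already done by equation (\ref{scircrho}), which expresses $s(f\underline{\otimes}f')$ as the scalar $\epsilon\circ f(1_A)\cdot\epsilon'\circ f'(1_{A'})$ times $f\otimes f'$. For $f\in H_1$ we have $f(1_A)=1_A$, so $\epsilon\circ f(1_A)=\epsilon(1_A)=1_K$, and likewise $\epsilon'\circ f'(1_{A'})=1_K$ for $f'\in H_1'$; the scalar therefore collapses to $1_K$ and the claimed equality follows.

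I expect no genuine obstacle here: the corollary is essentially a bookkeeping consequence of (\ref{scircrho}) and Proposition \ref{split}. The only points requiring care are the identifications $A=C$ and $A'=C'$, which make the conditions $f(1_A)=1_A$ in the definitions of $H_1,H_1'$ meaningful, and the silent use of the isomorphisms $C\otimes K\simeq C$ and $K\otimes C'\simeq C'$ when simplifying $(\id_C\otimes\epsilon')$ and $(\epsilon\otimes\id_{C'})$ applied to a pure tensor one of whose factors is a unit.
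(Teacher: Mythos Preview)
Your proof is correct and follows essentially the same route as the paper: for the inclusion you evaluate $F_1$ and $F_2$ at the units directly from their definitions using $\epsilon(1_A)=1_K$ and $\epsilon'(1_{A'})=1_K$, and for the second assertion you invoke (\ref{scircrho}) and observe that the scalar factor collapses to $1_K$. The paper does exactly this, only computing $F_2$ first and leaving $F_1$ to symmetry; you do the reverse and are slightly more explicit about the canonical isomorphisms and the need for $A'=C'$ alongside $A=C$.
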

\noindent

\begin{proof}
If $F\in\underline{H}_1$ then, using compatibility of counit and unit, $F_2(1_{A'})=(\epsilon \otimes \id_{C'})\circ F(1_A\otimes 1_{A'})=(\epsilon \otimes \id_{C'})(1_A\otimes 1_{A'})=1_{A'}$  that is $F_2\in H_2$ and similarly $F_1\in H_1$. We get $s(F)=F_1\otimes F_2 \in H_1\otimes H_1'\,$.
The last assertion is just (\ref{scircrho}), where the coefficients on the right hand side equal $1$ by our assumptions.
\end{proof}

Our question "Is $\Hom_K(A,C)$ a coalgebra?" can now be answered affirmatively, but only on suitable subspaces.

\begin{proposition}
\label{DeltaCirc}
Let $L$ ba a $K$-linear space, which supports an algebra ${\cal A}=(L,\triangledown,\eta)$ and a coalgebra $\,{\cal C}=(L,\vartriangle,\epsilon)$.
Suppose that the structures of ${\cal A}$ and ${\cal C}$ are compatible in the weak sense, that the unit $\eta$ is a coalgebra morphism, i.e.\
\begin{eqnarray}
\label{etaCoMorph}
\epsilon\circ\eta = \id_K\,, \qquad \vartriangle \circ\,\eta = \eta \otimes \eta\,.
\end{eqnarray}
Like above set
\begin{eqnarray*}
H :=& \Hom_K(L,L),\; \qquad\qquad & \underline{H}:= \Hom_K(L\otimes L,L\otimes L) \\
H_1 :=& \{f\in H\mid f(1_A)=1_A \}\,,\qquad &
\underline{H}_1 := \{F\in \underline{H}\mid F(1_{{\cal A}\otimes {\cal A}})=1_{{\cal A}\otimes {\cal A}} \}
\end{eqnarray*}
Then $\vartriangle\circ f\circ \triangledown  \in \underline{H}_1$ if $f\in H_1$ and (recall that $s$ is not linear)
\begin{eqnarray*}
%\label{HomComult}
s(\vartriangle\circ f\circ \triangledown) = f\otimes f \qquad \mbox{for } f\in H_1\,.
\end{eqnarray*}
Restricting this application $f\mapsto f\otimes f$ to a linearly independent set  $\,{\cal B} \subseteq H_1$ generates a comultiplication on the span $H_{\cal B}\subseteq H$ of $\,{\cal B}$. Namely defining $\vartriangle_{\cal B}$ and $\epsilon_{\cal B}$ like in Proposition \ref{diagonal} then $(H_{\cal B},\vartriangle_{\cal B},\epsilon_{\cal B})$ is a coalgebra. \end{proposition}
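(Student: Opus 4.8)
The plan is to treat the three assertions separately: the first two are direct computations with the unit and counit, and the coalgebra statement is then obtained by citing Proposition \ref{diagonal}. First I would check that $\vartriangle\circ f\circ\triangledown\in\underline{H}_1$ for $f\in H_1$. Since the unit of ${\cal A}\otimes{\cal A}$ is $1_A\otimes 1_A$, I evaluate in three steps: $\triangledown(1_A\otimes 1_A)=1_A$ because $1_A\cdot 1_A=1_A$; then $f(1_A)=1_A$ because $f\in H_1$; and finally $\vartriangle(1_A)=\vartriangle\circ\eta(1_K)=(\eta\otimes\eta)(1_K)=1_A\otimes 1_A$ using the second identity in the compatibility hypothesis (\ref{etaCoMorph}). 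Composing gives $(\vartriangle\circ f\circ\triangledown)(1_A\otimes 1_A)=1_A\otimes 1_A$, which is the claim.

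Next I would compute the split. Put $F:=\vartriangle\circ f\circ\triangledown$ and specialize Proposition \ref{split} to $A=A'=C=C'=L$ with $\triangledown'=\triangledown$, $\vartriangle'=\vartriangle$, $\eta'=\eta$, $\epsilon'=\epsilon$. For $F_1(x)=(\id_L\otimes\epsilon)\circ F(x\otimes 1_A)$, unitarity of $\triangledown$ gives $\triangledown(x\otimes 1_A)=x$, hence $F(x\otimes 1_A)=\vartriangle(f(x))$, and then counitarity of $\vartriangle$ yields $(\id_L\otimes\epsilon)\circ\vartriangle(f(x))=f(x)$; thus $F_1=f$. The computation for $F_2$ is symmetric, using $\triangledown(1_A\otimes x')=x'$ and the companion counit identity, so $F_2=f$ as well. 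Therefore $s(F)=F_1\otimes F_2=f\otimes f$, as required. (This step in fact holds for every $f\in H$; membership in $H_1$ was needed only for the first assertion.)

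Finally, I would obtain the coalgebra by invoking Proposition \ref{diagonal} with $L'=L$, $\eta'=\eta$, $\epsilon'=\epsilon$. Its standing hypothesis $\epsilon'\circ\eta'=\id_K$ is exactly $\epsilon\circ\eta=\id_K$, supplied by (\ref{etaCoMorph}), and its condition $b\circ\eta=\eta'$ on the base elements unwinds to $b(1_A)=1_A$, i.e.\ ${\cal B}\subseteq H_1$, which is assumed. Proposition \ref{diagonal} then makes $H_{\cal B}$ a coalgebra with $\vartriangle_{\cal B}(b)=b\otimes b$ and $\epsilon_{\cal B}(b)=1_K$ on ${\cal B}$ extended by linearity, which is precisely the comultiplication gotten by restricting the map $f\mapsto f\otimes f$ to ${\cal B}$. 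I do not expect a genuine obstacle here; the only thing demanding care is the bookkeeping that identifies the abstract hypotheses of Proposition \ref{diagonal} with the present data, namely that $\{\,b\mid b\circ\eta=\eta\,\}$ is exactly $H_1$ and that the counit required there is furnished by the given $\epsilon$ via (\ref{etaCoMorph}).
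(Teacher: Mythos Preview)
Your proof is correct and follows the same line as the paper's own (very terse) argument: verify $\vartriangle\circ f\circ\triangledown\in\underline{H}_1$ from (\ref{etaCoMorph}), and cite Proposition \ref{diagonal} for the coalgebra structure. You actually supply more than the paper does, since you spell out the computation of $s(\vartriangle\circ f\circ\triangledown)=f\otimes f$ via unitarity of $\triangledown$ and counitarity of $\vartriangle$, a step the paper's proof leaves to the reader.
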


\begin{proof}
Let $f\in H_1$ then (\ref{etaCoMorph}) immediately implies $\vartriangle\circ f\circ \triangledown  \in \underline{H}_1$. That $(H_{\cal B},\vartriangle_{\cal B},\epsilon_{\cal B})$ is a coalgebra, follows from Proposition \ref{diagonal}.
\end{proof}

%\newpage
\section{Bialgebras and Hopf algebras}
\label{BiHopf}

Here we collect the definitions and basic properties of bilagebras and Hopf algebras, weakening the compatibility condition between multiplication and comultiplication in order to access a greater domain of applications. Some of them, namely m-weak incidence Hopf algebras, will be treated in the remaining sections.

Let $L$ ba a $K$-linear space, which supports an algebra ${\cal A}=(L,\triangledown,\eta)$ and a coalgebra ${\cal C}=(L,\vartriangle,\epsilon)$. We regard the following compatibility conditions.
\begin{eqnarray}
\label{bi1}
(\triangledown\otimes\triangledown)\circ T\circ(\vartriangle\otimes\vartriangle) &=& \vartriangle\circ\triangledown \quad : L\otimes L \rightarrow L\otimes L \\
\label{bi2}
\vartriangle \circ\,\eta &=& \eta \otimes \eta \quad : K \simeq K\otimes K \rightarrow L\otimes L \\
\label{bi3}
\epsilon\circ \triangledown &=& \epsilon\otimes\epsilon \quad : L\otimes L \rightarrow K \simeq K\otimes K \\
\label{bi4}
\epsilon\circ\eta &=& \id_K \quad : K\rightarrow K
\end{eqnarray}
Recall that $\overline{\triangledown}=(\triangledown\otimes\triangledown)\circ T$ is the multiplication of the algebra ${\cal A}\otimes {\cal A}$ and $\underline{\vartriangle}=T\circ(\vartriangle\otimes\vartriangle)$ the comultiplication of the coalgebra $\;{\cal C}\otimes {\cal C}$. Condition (\ref{bi1}) is represented by the commuting diagram
\begin{center}
\begin{minipage}{6cm}
\xymatrix@=4em{
{L\otimes L}
\ar[rr]^{\triangledown}
\ar[d]_{\vartriangle\otimes\vartriangle}
\ar[ddr]^{\underline{\vartriangle}}
&& {L}
\ar[dd]^{\vartriangle}
 \\
{\begin{array}{c}
L \quad\; L \\
\otimes\otimes\otimes  \\
L \quad\; L
\end{array}}
\ar[dr]_{T} \ar[drr]^{\overline{\triangledown}}
 \\
& {\begin{array}{c}
L \otimes L \\
\otimes \\
L \otimes L
\end{array}}
\ar[r]_{\scriptsize{\begin{array}{c}
\triangledown \\
\otimes \\
\triangledown
\end{array}}} &
{\begin{array}{c}
L \\
\otimes \\
L
\end{array}}
 }
\end{minipage}
\end{center}
and it is obvious that
\begin{eqnarray*}
\vartriangle \mbox{ is an algebra morphism } &\Leftrightarrow & (\ref{bi1}), (\ref{bi2}) \mbox{ hold} \\
\epsilon \mbox{ is an algebra morphism } &\Leftrightarrow & (\ref{bi3}), (\ref{bi4}) \mbox{ hold} \\
\triangledown \mbox{ is a coalgebra morphism } &\Leftrightarrow & (\ref{bi1}), (\ref{bi3}) \mbox{ hold} \\
\eta \mbox{ is a coalgebra morphism } &\Leftrightarrow & (\ref{bi2}), (\ref{bi4}) \mbox{ hold}
\end{eqnarray*}
The quintuple $(L,\triangledown,\eta;\vartriangle,\epsilon)$ is called a {\bf m-weak bialgebra} if $(L,\triangledown,\eta)$ is an algebra and $(L,\vartriangle,\epsilon)$ a coalgebra and (\ref{bi2}), (\ref{bi3}), (\ref{bi4}) hold. A m-weak bialgebra is called a (strong) {\bf bialgebra} if equation (\ref{bi1}) holds, too.

\begin{example}
\label{MatrixBialg3}
The coalgebra $(M_n,\vartriangle,\epsilon)$ of $n\times n$ matrices (Example \ref{MatrixBialg}) and it's dual matrix algebra $(M_n,\,\cdot\,,U)$ (Example \ref{MatrixBialg2}) forms a bialgebra. The proof is straightforward using the property $\;B_{i,j}\cdot B_{k,\ell}=\delta_{j,k}B_{i,\ell}\;$ of the standardbase $\{B_{i,j} \mid i,j=1,\dots ,n\}$ of $M_n$.
\end{example}

Be aware, that in a bialgebra the multiplication $\triangledown$ is a coalgebra morphism, but it is an algebra morphism only in the commutative case (Corollary \ref{element}), similarly with the comultiplication. Does there exist a more symmetric issue between m-weak and strong bialgebra?

A {\bf bialgebra morphism} is an application between m-weak bialgebras which is simultaneously an algebra morphism and a coalgebra morphism.

The linear space $H= \Hom_K(L,L)$ supports two algebra structures. First the convolution algebra ${\cal H}({\cal C},{\cal A}):=(H,\star,u)$ (cf.\ Theorem \ref{convAlg}). The other one is simpler with composition of applications as multiplication and the identity as unit, $(H,\circ,\id_L)$. We call it the {\bf composition algebra} of the linear space $L$.

\begin{proposition}
\label{composBialg}
Let $L$ ba a $K$-linear space endowed with linear applications $\eta: K\rightarrow L$ and $\epsilon: L\rightarrow K$, which are compatible, i.e.\ (\ref{bi4}) holds. Let $H_{\cal B}$ be a subalgebra of the composition algebra $(\Hom_K(L,L),\circ,id_L)$, which has a unitary base ${\cal B}$, i.e.\
$b\circ\eta=\eta$ for all $b\in {\cal B}$. Then $H_{\cal B}$ becomes a bialgebra,
$$
{\cal H}_{\cal B} :=(H_{\cal B},\circ,\id_L;\vartriangle_{\cal B},\epsilon_{\cal B})\,,
$$
where $(H_{\cal B},\vartriangle_{\cal B},\epsilon_{\cal B})$ is the coalgebra defined in Proposition \ref{diagonal},
$$\vartriangle_{\cal B}(b) := b\otimes b\,, \quad
\epsilon_{\cal B}(b):=1_K\,, \qquad b\in {\cal B}\,.$$
\end{proposition}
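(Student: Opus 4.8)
The plan is to verify the two structural claims in turn: that $(H_{\cal B},\vartriangle_{\cal B},\epsilon_{\cal B})$ is a coalgebra, and that the algebra $(H_{\cal B},\circ,\id_L)$ together with this coalgebra satisfies the four bialgebra compatibility conditions (\ref{bi1})--(\ref{bi4}). The coalgebra claim is handed to me by Proposition \ref{diagonal}, so I would merely check its hypotheses in the present situation: here $L=L'$, $\eta=\eta'$ and $\epsilon=\epsilon'$, the required $\epsilon'\circ\eta'=\id_K$ is exactly (\ref{bi4}), the set ${\cal B}$ is linearly independent because it is a base, and $b\circ\eta=\eta$ for $b\in{\cal B}$ is the unitary-base hypothesis. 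The organizing observation for everything else is that, by Example \ref{diagonal0} together with Proposition \ref{BtB}, ${\cal B}$ is precisely the set of group-like elements of $(H_{\cal B},\vartriangle_{\cal B},\epsilon_{\cal B})$, i.e.\ those $f$ with $\vartriangle_{\cal B}(f)=f\otimes f$ and $\epsilon_{\cal B}(f)=1_K$. Since each of (\ref{bi1})--(\ref{bi4}) is an equality of linear maps, I would reduce each to an evaluation on ${\cal B}$ (respectively on ${\cal B}\otimes{\cal B}$) and read it off from this group-like description.

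For the counit and unit conditions I would argue directly. Condition (\ref{bi4}) reads $\epsilon_{\cal B}(\id_L)=1_K$, and $\epsilon_{\cal B}(\id_L)=\epsilon\circ\id_L\circ\eta(1_K)=\epsilon\circ\eta(1_K)=1_K$ by the assumed compatibility. For (\ref{bi3}), $\epsilon_{\cal B}\circ\triangledown=\epsilon_{\cal B}\otimes\epsilon_{\cal B}$ with $\triangledown=\circ$, I would test on $b\otimes b'$ for $b,b'\in{\cal B}$: unitarity gives $b'(1_L)=b'\circ\eta(1_K)=\eta(1_K)=1_L$, whence $\epsilon_{\cal B}(b\circ b')=\epsilon\big(b(b'(1_L))\big)=\epsilon(b(1_L))=1_K=\epsilon_{\cal B}(b)\,\epsilon_{\cal B}(b')$, and linearity extends this over all of $H_{\cal B}\otimes H_{\cal B}$. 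Condition (\ref{bi2}) says the unit is group-like; it amounts to $\vartriangle_{\cal B}(\id_L)=\id_L\otimes\id_L$, which holds because the unit $\id_L$ of the subalgebra is itself a base element, $\id_L\in{\cal B}$. These three conditions already deliver a m-weak bialgebra.

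The remaining and decisive point is (\ref{bi1}), $(\triangledown\otimes\triangledown)\circ T\circ(\vartriangle_{\cal B}\otimes\vartriangle_{\cal B})=\vartriangle_{\cal B}\circ\triangledown$ with $\triangledown=\circ$. Evaluating the left-hand side on $b\otimes b'$ and tracing $\vartriangle_{\cal B}(b)\otimes\vartriangle_{\cal B}(b')=(b\otimes b)\otimes(b'\otimes b')$ through $T$ and then $\circ\otimes\circ$ yields $(b\circ b')\otimes(b\circ b')$, whereas the right-hand side gives $\vartriangle_{\cal B}(b\circ b')$. Thus (\ref{bi1}) restricted to the base is exactly the assertion that $b\circ b'$ is group-like, i.e.\ $b\circ b'\in{\cal B}$. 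This is the crux, and I expect it to be the main obstacle: it forces ${\cal B}$ to be closed under composition, so that (together with $\id_L\in{\cal B}$) ${\cal B}$ is a submonoid of the composition monoid. I would flag this closure as the essential content of the hypothesis that $H_{\cal B}$ is a \emph{subalgebra} carried by the base ${\cal B}$, and note that the whole bialgebra structure collapses to the single statement that the group-like family ${\cal B}$ is unital and composition-closed; once that is isolated, linearity closes each of the four conditions.
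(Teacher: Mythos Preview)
Your approach is essentially the paper's: verify (\ref{bi1})--(\ref{bi4}) on base vectors $b\otimes b'$ after invoking Proposition~\ref{diagonal} for the coalgebra structure. The paper's proof is terser --- it simply writes the four lines
\[
(\triangledown_\circ\otimes\triangledown_\circ)\circ T\circ(\vartriangle_{\cal B}\otimes\vartriangle_{\cal B})(b\otimes b')=(b\circ b')\otimes(b\circ b')=\vartriangle_{\cal B}\circ\triangledown_\circ(b\otimes b'),\quad\text{etc.}
\]
and declares the proof complete.

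You are in fact more careful than the paper at the crucial point. The equality $\vartriangle_{\cal B}(b\circ b')=(b\circ b')\otimes(b\circ b')$ requires $b\circ b'\in{\cal B}$, and $\vartriangle_{\cal B}(\id_L)=\id_L\otimes\id_L$ requires $\id_L\in{\cal B}$; you flag both explicitly, while the paper silently uses them. Your instinct that this should be ``the essential content of the hypothesis that $H_{\cal B}$ is a subalgebra carried by the base ${\cal B}$'' is, however, not quite right as a deduction: a subalgebra spanned by a unitary base need not have that base closed under composition (e.g.\ upper-triangular $2\times 2$ matrices with the unitary base $\{\id,\,\begin{smallmatrix}1&0\\0&0\end{smallmatrix},\,\begin{smallmatrix}1&1\\0&1\end{smallmatrix}\}$). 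So this closure is really an additional standing assumption that both your argument and the paper's need; the paper's proof simply takes it for granted.
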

Examples for such bases and subspaces will be given in the context of incidence algebras, see Proposition \ref{compBialg}. \\

\begin{proof}
We denote multiplication and unit of ${\cal H}_{\cal B}$ with $\triangledown_\circ$ and $\eta_\circ$, respectively. It is sufficient to verify (\ref{bi1}) through (\ref{bi4}) for base vectors.  $\{ b\otimes b' \mid b, b' \in {\cal B} \}$ is a base of $H_{\cal B}\otimes H_{\cal B}$.
\begin{eqnarray*}
(\triangledown_\circ\otimes\triangledown_\circ)\circ T\circ(\vartriangle_{\cal B}\otimes\vartriangle_{\cal B})\;(b\otimes b') &=& (b\circ b')\otimes (b\circ b') \; =\;  \vartriangle_{\cal B}\circ\triangledown_\circ \,(b\otimes b') \\
\vartriangle_{\cal B} \circ\,\eta_\circ (1_K) &=& \id_L\otimes\id_L \;=\; \eta_\circ \otimes \eta_\circ\, (1_K\otimes 1_K) \\
\epsilon_{\cal B}\circ \triangledown_\circ\;(b\otimes b') &=& 1_K\cdot 1_K \;=\; \epsilon_{\cal B}\otimes\epsilon_{\cal B}\;(b\otimes b') \\
\epsilon_{\cal B}\circ\eta_\circ (1_K) &=& 1_K
\end{eqnarray*}
The proof is complete.
\end{proof}

In analogy to Proposition \ref{composBialg} we ask: Does there exist a natural coalgebra structure on $H= \Hom_K(L,L)$, such that, together with the convolution algebra $(H,\star,u)$, we get a bialgebra? The examples in Section \ref{Examples} will give an affirmative answer, but like in Proposition \ref{composBialg} only on a subspace of $H$ and the coalgebra structure depending on a preassigned base.

By definition a {\bf (m-weak) Hopf algebra} is a (m-weak) bialgebra $(L,\triangledown,\eta;\vartriangle,\epsilon)$ for which the unit $\id_L$ of the composition algebra $(H,\circ,\id_L)$ has a multiplicative inverse in the convolution algebra ${\cal H}=(H,\star,u)$.\footnote{The analogous question, if $u$ has an inverse in the composition algebra, can be affirmative only in case $\dim_K L=1$ since $u=\eta\circ\varepsilon$.}
This inverse $S:=\id_L^{\star-1}$ is called the {\bf antipode} of the (m-weak) Hopf algebra. Recall that the antipode $S$, if it exists, is uniquely determined by the two equations
\begin{eqnarray}
\label{Antipode}
\id_L \star S = u \,,\quad S\star \id_L = u \,.
\end{eqnarray}

The bialgebra ${\cal H}_{\cal B}$ in Proposition \ref{composBialg} has no antipode, whence it is no Hopf algebra. The same holds for the matrix bialgebra in Example \ref{MatrixBialg3}. But restricting to upper triangular matrices, one gets m-weak Hopf algebras as we shall see in the next sections.

\begin{example}
\label{Polynbialg}
With the notations of Example \ref{PolynCoalg} $(\C[X],\cdot,1;\vartriangle_2,\epsilon)$ is a bialgebra, whereas $(\C[X],\cdot,1;\vartriangle_1,\epsilon)$ is only a m-weak bialgebra since equation (\ref{bi1}) fails.

$\vartriangle_1$ is no algebra morphism:
$
\vartriangle_1(X^m X^n)=\sum_{k=0}^{m+n} X^k\otimes X^{m+n-k}
$
whereas
$
\vartriangle_1(X^m)\vartriangle_1( X^n)=\sum_{i=0}^m \sum_{j=0}^n X^{i+j}\otimes X^{m+n-(i+j)} = \sum_{k=0}^{m+n} a_k X^k\otimes X^{m+n-k}\,$, with $a_k:=|\{(i,j)\in\N_0\times\N_0\mid i\le m, j\le n, i+j=k\}|\,$.

Both m-weak bialgebras have an antipode, namely
\begin{eqnarray*}
S_1(X^n) = \left\{
\begin{array}{ll}
1 & \mbox{if } \quad n=0 \\
-X & \mbox{if } \quad n=1 \\
0 & \mbox{else}
\end{array} \right.
\;,\qquad S_2(X^n)=(-X)^n =S_2(X)^n\;.
\end{eqnarray*}
$S_2$ is a bialgebra morphism. $S_1$ is neither an algebra nor a coalgebra morphism.
\end{example}

\begin{example}
\label{Hamilton}
Hamilton's skew field $\,\Quat$ of quaternions can be made a m-weak Hopf algebra, if one takes the coalgebra structure from Example \ref{diagonal0} w.r.t.\ the natural base ${\cal B}=\{1,i,j,k\}$ of $\,\Quat$ over the field $\,\R$ of reals. It is no Hopf algebra since (\ref{bi1}) fails, $(\triangledown\otimes\triangledown)\circ T\circ(\vartriangle\otimes\vartriangle)(i\otimes i)= (-1)\otimes (-1) \neq -1\otimes 1 = \vartriangle\circ\triangledown(i\otimes i)$. The antipode $S$ is conjugation, $\;S(a+ib+jc+kd)=a-ib-jc-kd,\quad a,b,c,d\in\R$. One easily checks that $S(x\cdot y)=S(y)\cdot S(x)$, i.e.\ $S:\Quat\rightarrow \Quat\;$ is an antimorphism of $\,\R$-algebras.

The sub-$\R$-algebra $\C\subset \Quat$ of complex numbers is a commutative m-weak Hopf algebra.
\end{example}

The antipode is compatible with many operations and constructions as we will show next. First, the opposite bialgebra has the same antipode as the original one.

\begin{proposition}
\label{opAntipode}
The opposite $(L,\triangledown^{op},\eta;\vartriangle^{op},\epsilon)$ of a (m-weak) bialgebra $(L,\triangledown,\eta; \\
\vartriangle,\epsilon)$ is a (m-weak) bialgebra. The opposite of a m-weak Hopf algebra with antipode $S$ is again a m-weak Hopf algebra with the same antipode $S$.
\end{proposition}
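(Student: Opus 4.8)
The plan is to treat the two assertions separately, handling first the bialgebra structure and then the antipode.

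For the first assertion, recall that the opposite algebra $(L,\triangledown^{op},\eta)$ and the opposite coalgebra $(L,\vartriangle^{op},\epsilon)$ are again an algebra and a coalgebra, with the same unit $\eta$ and counit $\epsilon$, since passing to the opposite only reverses the order of $\triangledown$ resp. $\vartriangle$ via the twist $t$. Because the conditions defining a m-weak bialgebra are exactly (\ref{bi2}), (\ref{bi3}), (\ref{bi4}), I would simply verify that each survives this passage. Condition (\ref{bi4}), $\epsilon\circ\eta=\id_K$, involves neither $\triangledown$ nor $\vartriangle$ and is untouched. For (\ref{bi2}) I compute $\vartriangle^{op}\circ\eta = t\circ\vartriangle\circ\eta = t\circ(\eta\otimes\eta)=\eta\otimes\eta$, using that $t$ fixes $1_L\otimes 1_L$. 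For (\ref{bi3}) I compute $\epsilon\circ\triangledown^{op}=\epsilon\circ\triangledown\circ t=(\epsilon\otimes\epsilon)\circ t=\epsilon\otimes\epsilon$, using that $\epsilon\otimes\epsilon$ is symmetric under $t$ once one lands in $K\otimes K\cong K$. Thus the opposite is again a m-weak bialgebra. In the strong case one must additionally check (\ref{bi1}) for the opposite; this is a routine computation following from the identities $T\circ t=(t\otimes t)\circ T$ and $t\circ T=T\circ(t\otimes t)$ already invoked for (\ref{querop}), reflecting that the flip makes the category of $K$-spaces symmetric, so the double-opposite of a bialgebra is again a bialgebra.

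For the second assertion I would lean on Proposition \ref{opConvol}. The convolution algebra governing the opposite m-weak bialgebra is ${\cal H}({\cal C}^{op},{\cal A}^{op})$, and by Proposition \ref{opConvol} this equals ${\cal H}^{op}({\cal C},{\cal A})$: its multiplication $\star^{op}$ satisfies $f\star^{op}g=g\star f$, while its unit is still $u=\eta\circ\epsilon$, unchanged because $\eta$ and $\epsilon$ are unchanged by passing to the opposite. The key observation is then purely algebraic: in any unital algebra and its opposite, an element and its two-sided inverse remain mutually inverse, since inversion is unaffected by reversing the product. Concretely, the antipode $S=\id_L^{\star-1}$ satisfies the equations (\ref{Antipode}), namely $\id_L\star S=u$ and $S\star\id_L=u$. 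Reading these through $\star^{op}$ gives $\id_L\star^{op}S=S\star\id_L=u$ and $S\star^{op}\id_L=\id_L\star S=u$, which are precisely the antipode equations for the opposite m-weak bialgebra. Hence $\id_L$ is invertible in $(H,\star^{op},u)$ with inverse $S$, so the opposite is a m-weak Hopf algebra with the same antipode $S$.

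I expect no serious obstacle here: once Proposition \ref{opConvol} identifies the opposite convolution algebra, the antipode claim reduces to the triviality that inverses are preserved under passage to the opposite of an algebra. The only point requiring a moment's care is that the two equations in (\ref{Antipode}) exchange roles under this passage, so that one genuinely needs both of them; a mere one-sided inverse would not transfer automatically.
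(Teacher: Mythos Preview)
Your proof is correct and follows essentially the same route as the paper. In particular, your antipode argument is identical: invoke Proposition \ref{opConvol} to identify ${\cal H}({\cal C}^{op},{\cal A}^{op})={\cal H}^{op}({\cal C},{\cal A})$, then read the two equations (\ref{Antipode}) through $\star^{op}$ so that they swap, exactly as the paper does. The only cosmetic difference is that the paper handles the strong case by a two-step ``one opposite at a time'' argument (first $(L,\triangledown^{op},\eta;\vartriangle,\epsilon)$, then $(L,\triangledown,\eta;\vartriangle^{op},\epsilon)$, then combine), while you gesture directly at the single computation using the identities $T\circ t=(t\otimes t)\circ T$; both amount to the same verification.
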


\begin{proof}
Since the field $K$ is commutative, the first assertion is trivial for m-weak bialgebras. To prove condition (\ref{bi1}) for $(L,\triangledown^{op},\eta;\vartriangle^{op},\epsilon)$ we get from the diagram (\ref{bi1}) for $(L,\triangledown,\eta;\vartriangle,\epsilon)$ and from $\overline{\triangledown}\circ t = \overline{\triangledown\circ t }$ (see (\ref{querop})), that $\vartriangle$ is an algebra morphism w.r.t.\ $\triangledown^{op}$. That is, $(L,\triangledown^{op},\eta;\vartriangle,\epsilon)$ is a bialgebra. Dually, $(L,\triangledown,\eta;\vartriangle^{op},\epsilon)$ is a bialgebra, too. Starting again with the last bialgebra we see that $\vartriangle^{op}$ is an algebra morphism w.r.t.\ $\triangledown^{op}$, i.e.\ $(L,\triangledown^{op},\eta;\vartriangle^{op},\epsilon)$ is a bialgebra.

Using Proposition \ref{opConvol} and (\ref{Antipode}) we get
$$\id \star^{op} S = S \star \id =u\,,\quad S \star^{op} \id = \id \star S =u$$
which shows that $S$ is the $\star^{op}$-inverse of $\id$, too.
\end{proof}

The tensor product preserves bialgebras and the antipode.

\begin{proposition}
\label{TensAntip}
If $B$, $B'$ are (m-weak) bialgebras then their tensor product is a (m-weak) bialgebra. If $B$, $B'$ are (m-weak) Hopf algebras with antipodes $S$ and $S'$, respectively, then $B\otimes B'$ is a (m-weak) Hopf algebra with antipode $S\otimes S'$.

Also, the tensor product of bialgebra morphisms is a bialgebra morphism.
\end{proposition}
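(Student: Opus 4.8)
The plan is to reduce every assertion to the morphism-transport results already proved, namely Proposition~\ref{TensMorph} and Proposition~\ref{TensInv}, so that essentially no new computation is required. Recall from the equivalences displayed just before the definition of m-weak bialgebra that a quintuple is a m-weak bialgebra exactly when its unit is a coalgebra morphism and its counit is an algebra morphism (these together yield (\ref{bi2}), (\ref{bi3}), (\ref{bi4})), and it is a strong bialgebra when in addition its comultiplication is an algebra morphism (which supplies (\ref{bi1})). Writing $B=(L,\triangledown,\eta;\vartriangle,\epsilon)$, $B'=(L',\triangledown',\eta';\vartriangle',\epsilon')$ and recalling that $B\otimes B'$ carries the algebra structure $(\overline{\triangledown},\overline{\eta})$ and the coalgebra structure $(\underline{\vartriangle},\underline{\epsilon})$ already established earlier, it suffices to transport these morphism properties across $\otimes$. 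Since $B,B'$ are m-weak bialgebras, $\eta,\eta'$ are coalgebra morphisms and $\epsilon,\epsilon'$ are algebra morphisms; by Proposition~\ref{TensMorph}.1 the tensor products $\overline{\eta}=\eta\otimes\eta'$ and $\underline{\epsilon}=\epsilon\otimes\epsilon'$ are again a coalgebra morphism and an algebra morphism respectively (using the canonical identifications $K\otimes K\simeq K$ as coalgebra and as algebra). This delivers (\ref{bi2}), (\ref{bi3}), (\ref{bi4}) for $L\otimes L'$, so $B\otimes B'$ is a m-weak bialgebra.

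For the strong case I would additionally produce (\ref{bi1}), equivalently show that $\underline{\vartriangle}$ is an algebra morphism, since (\ref{bi2}) already holds. Here $\vartriangle,\vartriangle'$ are algebra morphisms because $B,B'$ are strong bialgebras, so $\vartriangle\otimes\vartriangle'$ is an algebra morphism by Proposition~\ref{TensMorph}.1, and the capital twist is an algebra morphism by Proposition~\ref{TensMorph}.3; hence the composite $\underline{\vartriangle}=T\circ(\vartriangle\otimes\vartriangle')$ is an algebra morphism. The one point to watch is that the twist occurring here runs from $(L\otimes L)\otimes(L'\otimes L')$ to $(L\otimes L')\otimes(L\otimes L')$, i.e.\ it is the inverse of the map treated in Proposition~\ref{TensMorph}.3; but $T$ is an involution, so this inverse is again a capital twist and therefore still an algebra morphism. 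This is the only step that uses more than ``a tensor of morphisms is a morphism'', and it is where the tensor bookkeeping needs care.

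For the antipode, having shown that $B\otimes B'$ is a m-weak bialgebra, I only need the $\star$-invertibility of $\id_{L\otimes L'}$ in the convolution algebra ${\cal H}({\cal C}\otimes{\cal C}',{\cal A}\otimes{\cal A}')$, which is exactly the algebra $\underline{\cal H}$ of Proposition~\ref{TensInv} with $C=A=L$ and $C'=A'=L'$. Since $\id_{L\otimes L'}=\id_L\otimes\id_{L'}$ and $\id_L,\id_{L'}$ are $\star$-invertible with inverses $S,S'$, Proposition~\ref{TensInv} gives $(\id_L\otimes\id_{L'})^{\star-1}=\id_L^{\star-1}\otimes\id_{L'}^{\star-1}=S\otimes S'$. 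Thus $\id_{L\otimes L'}$ has the $\star$-inverse $S\otimes S'$, which is by definition the antipode of $B\otimes B'$, establishing that $B\otimes B'$ is a m-weak Hopf algebra.

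Finally, the morphism claim is immediate: if $\varphi,\varphi'$ are bialgebra morphisms then each is simultaneously an algebra and a coalgebra morphism, so by Proposition~\ref{TensMorph}.1 the tensor product $\varphi\otimes\varphi'$ is again both, hence a bialgebra morphism. I expect the main obstacle to be purely organizational rather than conceptual, namely keeping the identifications $K\otimes K\simeq K$ and the direction of the capital twist straight in the strong-bialgebra step; every remaining part is a direct appeal to the cited propositions.
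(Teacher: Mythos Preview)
Your proof is correct and follows essentially the same approach as the paper: both reduce the m-weak case to Proposition~\ref{TensMorph}.1 applied to $\eta\otimes\eta'$ and $\epsilon\otimes\epsilon'$, the strong case to Proposition~\ref{TensMorph} applied to $\underline{\vartriangle}=T\circ(\vartriangle\otimes\vartriangle')$, the antipode to Proposition~\ref{TensInv}, and the morphism claim again to Proposition~\ref{TensMorph}.1. Your version is in fact more explicit than the paper's, since you spell out that the capital twist occurring in $\underline{\vartriangle}$ is the inverse of the one treated in Proposition~\ref{TensMorph}.3 and that this is harmless because $T$ is an involution.
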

\begin{proof}
The tensor product of algebras is an algebra and the same holds with coalgebras (see Section 2). Since the counits $\epsilon$, $\epsilon'$ are algebra morphisms, so is $\epsilon\otimes\epsilon'$ by Proposition \ref{TensMorph}. Similarly the unit $\eta\otimes\eta'$ is a coalgebra morphism. That is, conditions (\ref{bi2}), (\ref{bi3}), (\ref{bi4}) are inherited by the tensor product. So the tensor product of m-weak bialgebras is a m-weak bialgebra. In case of strong bialgebras it remains to prove that (\ref{bi1}) is inherited, too. Again Proposition \ref{TensMorph} implies that the comultiplication $\underline{\vartriangle}=T\circ(\vartriangle\otimes\vartriangle')$ of $B\otimes B'$ is an algebra morphism.

Furthermore, if antipodes exist, Proposition \ref{TensInv} implies
$
\;\id_{B\otimes B'}^{\star -1} = (\id_B\otimes \id_{B'})^{\star -1} = \id_B^{\star -1}\otimes \id_{B'}^{\star -1} = S\otimes S'\,.
$

Finally, Proposition \ref{TensMorph} implies the assertion on morphisms.
\end{proof}

The operations composition and convolution obey certain distributivity laws. Especially the convolution inverse of (co)algebra morphisms can be computed by composition with the antipode.

\begin{proposition}
\label{InvAntip}
\begin{enumerate}
\item
Let $\alpha\in \Hom_K(B,A)$ be an algebra morphism from a m-weak bialgebra $B$ to an algebra $A$, then
\begin{eqnarray*}
\alpha\circ (f\star g) &=& (\alpha\circ f)\star(\alpha\circ g)\,,\quad f,g\in (\Hom_K(B,B),\star,u)\,.
\end{eqnarray*}
Especially $\alpha\circ S $ is the convolution inverse of $\alpha$, if $B$ is a m-weak Hopf algebra with antipode $S$.
\item
Let $\gamma\in \Hom_K(C,B)$ be a coalgebra morphism from a coalgebra $C$ to a m-weak bialgebra $B$, then
\begin{eqnarray*}
(f\star g)\circ \gamma &=& (f\circ \gamma)\star(g\circ \gamma)\,,\quad f,g\in (\Hom_K(B,B),\star,u)\,.
\end{eqnarray*}
Especially $S \circ \gamma$ is the convolution inverse of $\gamma$, if $B$ is a m-weak Hopf algebra with antipode $S$.
\end{enumerate}
\end{proposition}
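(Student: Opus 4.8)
The plan is to prove the distributivity laws directly by unfolding the definitions of convolution and of (co)algebra morphism, and then to deduce the statements about the antipode as a formal consequence. I will prove part 1 in detail; part 2 is entirely dual, so I will only indicate the dualization.

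For part 1, let $\alpha$ be an algebra morphism from the m-weak bialgebra $B=(L,\triangledown,\eta;\vartriangle,\epsilon)$ to the algebra $\mathcal A=(A,\triangledown_A,\eta_A)$, and let $f,g\in\Hom_K(L,L)$. By definition $f\star g=\triangledown\circ(f\otimes g)\circ\vartriangle$, so
$\alpha\circ(f\star g)=\alpha\circ\triangledown\circ(f\otimes g)\circ\vartriangle$. Since $\alpha$ is an algebra morphism we have $\alpha\circ\triangledown=\triangledown_A\circ(\alpha\otimes\alpha)$, and therefore
$$
\alpha\circ(f\star g)=\triangledown_A\circ(\alpha\otimes\alpha)\circ(f\otimes g)\circ\vartriangle
=\triangledown_A\circ\bigl((\alpha\circ f)\otimes(\alpha\circ g)\bigr)\circ\vartriangle\,,
$$
where the last equality is functoriality of the tensor product. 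The right-hand side is exactly the convolution $(\alpha\circ f)\star(\alpha\circ g)$ computed in the convolution algebra $\mathcal H(\mathcal C,\mathcal A)$ (here $\mathcal C=(L,\vartriangle,\epsilon)$). This is the asserted distributivity; a clean way to present it is a single commuting diagram whose outer path reads $\alpha\circ(f\star g)$ and whose lower path reads $(\alpha\circ f)\star(\alpha\circ g)$, with the only nontrivial cell being the one expressing that $\alpha$ commutes with the two multiplications.

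To get the statement about the antipode, suppose $B$ is a m-weak Hopf algebra with antipode $S$, so by \eqref{Antipode} one has $\id_L\star S=u=S\star\id_L$ in $\mathcal H(\mathcal C,\mathcal A_B)$. I apply the distributivity law just proved with $f=\id_L$, $g=S$ and again with $f=S$, $g=\id_L$ to obtain
$$
(\alpha\circ\id_L)\star(\alpha\circ S)=\alpha\circ(\id_L\star S)=\alpha\circ u\,,\qquad
(\alpha\circ S)\star(\alpha\circ\id_L)=\alpha\circ(S\star\id_L)=\alpha\circ u\,.
$$
Now $\alpha\circ\id_L=\alpha$, and the unit of the target convolution algebra $\mathcal H(\mathcal C,\mathcal A)$ is $\eta_A\circ\epsilon$; since $\alpha$ is an algebra morphism we have $\alpha\circ u=\alpha\circ\eta\circ\epsilon=\eta_A\circ\epsilon$, which is precisely that unit. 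Hence $\alpha\star(\alpha\circ S)=\eta_A\circ\epsilon=(\alpha\circ S)\star\alpha$, exhibiting $\alpha\circ S$ as the two-sided $\star$-inverse of $\alpha$.

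Part 2 is the exact dual: one unfolds $(f\star g)\circ\gamma=\triangledown\circ(f\otimes g)\circ\vartriangle\circ\gamma$ and uses the coalgebra-morphism identity $\vartriangle\circ\gamma=(\gamma\otimes\gamma)\circ\vartriangle_C$ together with functoriality of $\otimes$ to rewrite this as $(f\circ\gamma)\star(g\circ\gamma)$; the antipode statement then follows by substituting $f,g\in\{\id_L,S\}$ and checking $u\circ\gamma=\eta\circ\epsilon\circ\gamma=\eta\circ\epsilon_C$, the unit of the source convolution algebra. I do not expect a genuine obstacle here, since nothing uses the bialgebra compatibility \eqref{bi1}: the m-weak hypothesis is enough because only the morphism property of $\alpha$ (resp.\ $\gamma$) and the defining equations \eqref{Antipode} are invoked. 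The one point requiring care is the bookkeeping of \emph{which} convolution algebra each $\star$ lives in (the identities mix $\mathcal H(\mathcal C,\mathcal A_B)$ with $\mathcal H(\mathcal C,\mathcal A)$), and correspondingly verifying that $\alpha$ sends the unit $u=\eta\circ\epsilon$ of the source to the unit $\eta_A\circ\epsilon$ of the target rather than merely fixing it.
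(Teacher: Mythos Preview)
Your proof is correct and follows essentially the same argument as the paper. The only difference is organizational: the paper packages the distributivity and unit computation into a single citation of Proposition~\ref{ConvMorph} (specialized to $\gamma=\id_B$) to conclude that $h:f\mapsto\alpha\circ f$ is an algebra morphism between convolution algebras, whereas you inline that computation directly.
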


\begin{proof}
1. Applying Proposition \ref{ConvMorph} with $\gamma=\id_B$ we get an algebra morphism
\begin{eqnarray*}
h: \Hom_K(B,B) \rightarrow \Hom_K(B,A)\;,\qquad f\mapsto \alpha\circ f \,,
\end{eqnarray*}
whence
$
\alpha\circ (f\star g) = h(f\star g)=h(f)\star h(g)= (\alpha\circ f)\star(\alpha\circ g)$.
Denoting with $u_A$ the unit of the right hand side convolution algebra, we get with (\ref{Antipode})
$
u_A = h(u)=h(\id_B\star S) =\alpha \star (\alpha\circ S)\,
$
and $u_A= h(u)= h(S\star \id_B) = (\alpha\circ S)\star \alpha\,$.

2. proves similarly.
\end{proof}

Bialgebra morphisms respect the antipode.
\begin{corollary}
Let $\beta :B\rightarrow B'$ be a bialgebra morphism between m-weak Hopf algebras with antipodes $S$ and $S'$, then
$$
\beta \circ S \;=\; S'\circ \beta\,.
$$
\end{corollary}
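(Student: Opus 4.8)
The plan is to exploit the uniqueness of inverses in an algebra: I will show that both $\beta\circ S$ and $S'\circ\beta$ are the convolution inverse of $\beta$ in one and the same convolution algebra, whence they must coincide. Being a bialgebra morphism, $\beta$ is simultaneously an algebra morphism and a coalgebra morphism, and this is exactly what lets me invoke both parts of Proposition \ref{InvAntip}. The relevant convolution algebra is ${\cal H}({\cal C},{\cal A}')=(\Hom_K(B,B'),\star,u)$, where ${\cal C}$ denotes the coalgebra structure carried by $B$ and ${\cal A}'$ the algebra structure carried by $B'$; note that $\beta$ itself is an element of this space.

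First I would apply Proposition \ref{InvAntip}.1 to the algebra morphism $\alpha:=\beta:B\rightarrow B'$, regarding $B'$ merely as an algebra. Since $B$ is a m-weak Hopf algebra with antipode $S$, it yields that $\beta\circ S$ is the convolution inverse of $\beta$ in $\Hom_K(B,B')$. Dually, I would apply Proposition \ref{InvAntip}.2 to the coalgebra morphism $\gamma:=\beta:B\rightarrow B'$, regarding $B$ merely as a coalgebra; since the target $B'$ is a m-weak Hopf algebra with antipode $S'$ (this is the role played by ``$B$'' and ``$S$'' in part 2 of the proposition), this gives that $S'\circ\beta$ is the convolution inverse of $\beta$.

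The point that must be checked carefully --- and essentially the only obstacle --- is that the two statements really refer to the same convolution algebra, so that uniqueness of inverses applies. In both applications the convolution product is $f\star g=\triangledown'\circ(f\otimes g)\circ\vartriangle$, built from the comultiplication $\vartriangle$ of the source $B$ and the multiplication $\triangledown'$ of the target $B'$; hence the underlying convolution algebra ${\cal H}({\cal C},{\cal A}')$ is indeed the same in both cases.

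Finally, the proofs of Proposition \ref{InvAntip} actually produce two-sided inverses, namely $(\beta\circ S)\star\beta=u=\beta\star(S'\circ\beta)$. I would then conclude by the standard monoid argument, using associativity of $\star$ from Theorem \ref{convAlg}: $\beta\circ S=(\beta\circ S)\star u=(\beta\circ S)\star\bigl(\beta\star(S'\circ\beta)\bigr)=\bigl((\beta\circ S)\star\beta\bigr)\star(S'\circ\beta)=u\star(S'\circ\beta)=S'\circ\beta$. This yields $\beta\circ S=S'\circ\beta$, as desired.
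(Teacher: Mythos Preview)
Your proof is correct and follows exactly the same strategy as the paper: both $\beta\circ S$ and $S'\circ\beta$ are identified as the convolution inverse $\beta^{\star-1}$ in ${\cal H}({\cal C}_B,{\cal A}_{B'})$ via the two parts of Proposition~\ref{InvAntip}. The paper's one-line proof leaves implicit precisely the point you spell out---that both applications of Proposition~\ref{InvAntip} land in the same convolution algebra---so your extra care there is well placed.
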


\begin{proof}
Proposition \ref{InvAntip} implies $\beta \circ S = \beta^{\star-1} = S'\circ \beta$.
\end{proof}

According to their importance in Proposition \ref{InvAntip} we regard the algebra morphisms among the linear morphisms in the morphism space $H$. Applying the unit to the unit enforces that they form an affine subspace, but no linear subspace (cf. Corollary \ref{affineBase}). We show that in the abelian case this affine space forms a group with convolution as group operation (cf.\ \cite{Manin} 2.5).

\begin{proposition}
\label{starAlgMorph}
Let $(B,\triangledown,\eta;\vartriangle,\epsilon)$ be a bialgebra, $(A',\triangledown',\eta')$ an abelian algebra and
$$
H_{\mbox{\small alg}}:= \Hom_{K\mbox{\small -alg}}(B,A') \;\subseteq\; H=\Hom_K(B,A')
$$
the affine subspace of the $K$-algebra morphisms $\varphi: B\rightarrow A'$. Then
$$ \varphi,\psi \in H_{\mbox{\small alg}} \;\Rightarrow\; \varphi\star\psi \in H_{\mbox{\small alg}}\, $$
and $(H_{\mbox{\small alg}},\star,u)$ is an abelian monoid.

$(H_{\mbox{\small alg}},\star,u)$ is an abelian group if $(B,\triangledown,\eta;\vartriangle,\epsilon)$ is a Hopf algebra.
\end{proposition}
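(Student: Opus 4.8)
The plan is to push everything through the convolution machinery of Section~\ref{AlgHom}, resting on three facts. Since $B$ is a bialgebra, $\triangledown:B\otimes B\to B$ is a coalgebra morphism, so by Proposition~\ref{ConvMorph} precomposition $f\mapsto f\circ\triangledown$ is a morphism of convolution algebras. Since $A'$ is abelian, $\triangledown':A'\otimes A'\to A'$ is an \emph{algebra} morphism by Corollary~\ref{element}, so again by Proposition~\ref{ConvMorph} postcomposition $F\mapsto\triangledown'\circ F$ is a morphism of convolution algebras. Finally convolution is compatible with the tensor product of maps (Proposition~\ref{TensInv}). Throughout, $\star$ denotes the convolution in $\Hom_K(B,A')$ with unit $u=\eta'\circ\epsilon$, and I abbreviate $\underline{H}:=\Hom_K(B\otimes B,A')$, the convolution algebra of the coalgebra $B\otimes B$ and the algebra $A'$, whose unit is $\eta'\circ(\epsilon\otimes\epsilon)$.

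First I would settle the monoid statement. By Theorem~\ref{convAlg}, $\star$ is associative on all of $\Hom_K(B,A')$ and, as $A'$ is abelian, commutative; and $u=\eta'\circ\epsilon$ is an algebra morphism by (\ref{bi3}) and (\ref{bi4}), hence lies in $H_{\mbox{\small alg}}$ as the unit. The real point is closure. Given $\varphi,\psi\in H_{\mbox{\small alg}}$, the map $\varphi\star\psi$ respects the unit by (\ref{bi2}) together with $\varphi\circ\eta=\psi\circ\eta=\eta'$, and it respects multiplication through the chain
\begin{eqnarray*}
(\varphi\star\psi)\circ\triangledown &=& (\varphi\circ\triangledown)\star(\psi\circ\triangledown) \;=\; \big(\triangledown'\circ(\varphi\otimes\varphi)\big)\star\big(\triangledown'\circ(\psi\otimes\psi)\big) \\
&=& \triangledown'\circ\big((\varphi\otimes\varphi)\star(\psi\otimes\psi)\big) \;=\; \triangledown'\circ\big((\varphi\star\psi)\otimes(\varphi\star\psi)\big)\,,
\end{eqnarray*}
whose four steps are precomposition with $\triangledown$ (Proposition~\ref{ConvMorph}), the morphism property of $\varphi$ and $\psi$, postcomposition with $\triangledown'$ (Proposition~\ref{ConvMorph}), and Proposition~\ref{TensInv}. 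Hence $\varphi\star\psi\in H_{\mbox{\small alg}}$ and $(H_{\mbox{\small alg}},\star,u)$ is an abelian monoid.

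For the group statement, fix $\varphi\in H_{\mbox{\small alg}}$ with $B$ a Hopf algebra. Proposition~\ref{InvAntip} already provides a $\star$-inverse $\Phi:=\varphi\circ S$ of $\varphi$ inside $\Hom_K(B,A')$, so it remains only to verify $\Phi\in H_{\mbox{\small alg}}$. Evaluating $\varphi\star\Phi=u$ at the unit, with (\ref{bi2}) and (\ref{bi4}), gives $\Phi\circ\eta=\eta'$. Multiplicativity is the delicate part, since I may not assume $\Phi\circ\triangledown=\triangledown'\circ(\Phi\otimes\Phi)$ in advance; instead I would show that both of these maps are $\star$-inverses in $\underline{H}$ of the single element $R:=\varphi\circ\triangledown=\triangledown'\circ(\varphi\otimes\varphi)$, the two descriptions of $R$ coinciding because $\varphi$ is a morphism. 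Precomposition with $\triangledown$ gives $R\star(\Phi\circ\triangledown)=(\varphi\star\Phi)\circ\triangledown=u\circ\triangledown=\eta'\circ(\epsilon\otimes\epsilon)$ by (\ref{bi3}), while the closure chain above, rerun with $\psi$ replaced by $\Phi$, gives $R\star\big(\triangledown'\circ(\Phi\otimes\Phi)\big)=\triangledown'\circ\big((\varphi\star\Phi)\otimes(\varphi\star\Phi)\big)=\triangledown'\circ(u\otimes u)=\eta'\circ(\epsilon\otimes\epsilon)$. Both products equal the unit of $\underline{H}$; since $\underline{H}$ is commutative, inverses are unique, so $\Phi\circ\triangledown=\triangledown'\circ(\Phi\otimes\Phi)$, i.e.\ $\Phi\in H_{\mbox{\small alg}}$. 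Thus every element of the monoid is invertible, and $(H_{\mbox{\small alg}},\star,u)$ is an abelian group.

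The step I expect to be the main obstacle is the multiplicativity in both the closure and the inversion arguments, for this is exactly where the commutativity of $A'$ is indispensable: it is needed, through Corollary~\ref{element}, to make $\triangledown'$ an algebra morphism and thereby turn postcomposition by $\triangledown'$ into a convolution-algebra morphism. Without abelianness the twist buried in $\overline{\triangledown}$ and in Proposition~\ref{TensInv} cannot be absorbed, in accordance with the Eckmann--Hilton obstruction of Theorem~\ref{EckmannHilton}; all the remaining steps are routine bookkeeping with the functoriality of convolution.
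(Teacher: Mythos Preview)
Your proof is correct, but the closure step takes a longer route than the paper's. The paper simply observes that $\varphi\star\psi=\triangledown'\circ(\varphi\otimes\psi)\circ\vartriangle$ is a composite of three algebra morphisms: $\vartriangle$ is one by the bialgebra axiom~(\ref{bi1}), $\varphi\otimes\psi$ is one by Proposition~\ref{TensMorph}, and $\triangledown'$ is one by Corollary~\ref{element} since $A'$ is abelian. This one-line argument replaces your four-step chain through Propositions~\ref{ConvMorph} and~\ref{TensInv}; what your route buys is that it never invokes the fact that $\vartriangle$ is an algebra morphism directly, only that $\triangledown$ is a coalgebra morphism, so it would adapt more readily to dual or one-sided settings.

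For the group statement the comparison goes the other way: the paper merely cites Proposition~\ref{InvAntip} for the existence of $\varphi^{\star-1}=\varphi\circ S$ and stops, without verifying that this inverse lies in $H_{\mbox{\small alg}}$. Your argument---showing that $\Phi\circ\triangledown$ and $\triangledown'\circ(\Phi\otimes\Phi)$ are both $\star$-inverses of $\varphi\circ\triangledown$ in $\underline{H}$ and hence coincide---is the standard uniqueness-of-inverses trick and genuinely fills that gap. (One could alternatively appeal to Theorem~\ref{SAnti}: since $S$ is an algebra anti-morphism and $A'$ is abelian, $\varphi\circ S$ is an algebra morphism; but your self-contained argument avoids that forward reference.) One minor remark: you invoke commutativity of $\underline{H}$ to conclude the two inverses agree, but once $\Phi\circ\triangledown$ is a two-sided inverse of $R$ (which it is, being the image of the two-sided inverse $\Phi$ under the algebra morphism $f\mapsto f\circ\triangledown$), uniqueness of one-sided inverses follows without commutativity.
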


\begin{proof}
$\varphi\otimes\psi$ is an algebra morphism by Proposition \ref{TensMorph},
$\vartriangle$ is an algebra morphism by the bialgebra property (\ref{bi1}) and $\triangledown'$ is one since the algebra $A'$ is abelian (Proposition \ref{element}), whence $\varphi\star\psi = \triangledown' \circ (\varphi\otimes\psi)\,\circ\vartriangle$ is an algebra morphism as well. Since $A'$ is abelian, the monoid $(H_{\mbox{\small alg}},\star,u)$ is abelian, too (Theorem \ref{convAlg}).

Under the final assumption, $B$ has an antipode and Proposition \ref{InvAntip} implies that any $\varphi\in (H_{\mbox{\small alg}},\star,u)$ has an inverse.
\end{proof}

\begin{corollary}
\label{ScircS}
Let $B$ be an abelian Hopf algebra with antipode $S$, then $S$ is an algebra isomorphism and $$\;S\circ S \,=\, \id_B\,.$$
\end{corollary}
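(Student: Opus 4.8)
The plan is to prove the two assertions in the order that their logical dependence dictates: first that $S$ is an algebra morphism, and then, building on that, that $S\circ S=\id_B$. The isomorphism claim will fall out at the very end, since an algebra morphism that squares to the identity is automatically bijective.

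First I would establish that $S$ is an algebra morphism by specializing Proposition \ref{starAlgMorph} to the target algebra $A'=B$. Because $B$ is abelian, its underlying algebra is an abelian algebra, so the affine space $H_{\mathrm{alg}}:=\Hom_{K\text{-alg}}(B,B)$ is, by Proposition \ref{starAlgMorph}, an abelian \emph{group} under convolution with unit $u=\eta\circ\epsilon$; here the Hopf hypothesis is exactly what upgrades the monoid to a group. The identity $\id_B$ is an algebra morphism and hence lies in $H_{\mathrm{alg}}$, so its convolution inverse — which is by definition the antipode $S=\id_B^{\star-1}$ — must again lie in $H_{\mathrm{alg}}$. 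Thus $S$ is an algebra morphism.

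Next I would deduce $S\circ S=\id_B$ by exhibiting two convolution inverses of $S$ and invoking their uniqueness in the monoid ${\cal H}=(H,\star,u)$. On the one hand, the defining equations (\ref{Antipode}) of the antipode say precisely that $\id_B$ is a two-sided convolution inverse of $S$. On the other hand, now that $S$ is known to be an algebra morphism from $B$ to $B$, Proposition \ref{InvAntip}(1) applied with $\alpha=S$ yields that $\alpha\circ S=S\circ S$ is the convolution inverse of $\alpha=S$. Since inverses in a monoid are unique, these two inverses coincide, giving $S\circ S=\id_B$.

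Finally, from $S\circ S=\id_B$ the map $S$ is its own set-theoretic inverse, hence bijective; being a bijective algebra morphism whose inverse is again the algebra morphism $S$, it is an algebra isomorphism. I expect the only substantive step to be the first one: recognizing that the commutativity of $B$ is exactly what places $S$ inside the group $H_{\mathrm{alg}}$, thereby making $S$ an algebra morphism and, crucially, making Proposition \ref{InvAntip} applicable to $\alpha=S$. Everything after that is a formal uniqueness-of-inverse argument in the convolution algebra.
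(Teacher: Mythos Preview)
Your proof is correct and follows exactly the paper's route: use Proposition~\ref{starAlgMorph} with $A'=B$ to place $S=\id_B^{\star-1}$ in $H_{\mathrm{alg}}$, then apply Proposition~\ref{InvAntip} with $\alpha=S$ to obtain $S\circ S=S^{\star-1}=\id_B$. You have simply made explicit the isomorphism conclusion and the uniqueness-of-inverse step that the paper leaves implicit.
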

Example \ref{Polynbialg} shows that the corollary fails for m-weak Hopf algebras. \\

\begin{proof}
Clearly $\id_B \in H_{\mbox{\small alg}}$, so Proposition \ref{starAlgMorph} implies $S=\id^{\star -1} \in H_{\mbox{\small alg}}$. Applying now Proposition \ref{InvAntip} with $\varphi=S$ we get $S\circ S=S^{\star -1}=\id_B$.
\end{proof}

\noindent
In the non abelian case one only gets (cf. Example \ref{Hamilton})
\begin{theorem}
\label{SAnti}
Let $B$ be a Hopf algebra with antipode $S$, then $S$ is an antimorphism of algebras and coalgebras, i.e.\
\begin{eqnarray*}
S\circ\triangledown = \triangledown^{op}\circ (S\otimes S)\,, \qquad \vartriangle^{op}\circ S = (S\otimes S)\circ\vartriangle\,.
\end{eqnarray*}
\end{theorem}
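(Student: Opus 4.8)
The plan is to prove the two identities separately, each time realizing the two maps in question as one-sided convolution inverses of a single fixed map, so that uniqueness of inverses in the convolution monoid forces them to coincide. The engine is Proposition \ref{InvAntip}: for a (strong) bialgebra the multiplication $\triangledown$ is a coalgebra morphism and the comultiplication $\vartriangle$ is an algebra morphism (by the equivalences listed after conditions (\ref{bi1})--(\ref{bi4})), so each of them becomes convolution-invertible once an antipode is present.

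For the algebra antimorphism I would work in the convolution algebra ${\cal H}({\cal C}\otimes{\cal C},{\cal A})=(\Hom_K(L\otimes L,L),\star,u)$, whose source coalgebra is $L\otimes L$ with $\underline{\vartriangle}=T\circ(\vartriangle\otimes\vartriangle)$ and whose unit is $u=\eta\circ(\epsilon\otimes\epsilon)$. Since $\triangledown\colon L\otimes L\to L$ is a coalgebra morphism from the coalgebra $L\otimes L$ into the Hopf algebra $B$, Proposition \ref{InvAntip} part 2 shows that $S\circ\triangledown$ is the two-sided convolution inverse of $\triangledown$ in this algebra. It then suffices to check that the competing map $\triangledown^{op}\circ(S\otimes S)$ is a right convolution inverse of $\triangledown$, i.e.
\begin{eqnarray*}
\triangledown\star\bigl(\triangledown^{op}\circ(S\otimes S)\bigr) \;=\; \triangledown\circ\bigl(\triangledown\otimes(\triangledown^{op}\circ(S\otimes S))\bigr)\circ T\circ(\vartriangle\otimes\vartriangle) \;=\; u\,.
\end{eqnarray*}
This is a direct composite computation: after expanding $T$ and using coassociativity of $\vartriangle$ together with associativity of $\triangledown$, the inner factor collapses through the antipode relation $\triangledown\circ(\id_L\otimes S)\circ\vartriangle = u$ from (\ref{Antipode}), leaving $\eta\circ(\epsilon\otimes\epsilon)=u$. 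A right inverse of a map that already possesses a two-sided inverse equals that inverse, so $\triangledown^{op}\circ(S\otimes S)=S\circ\triangledown$, which is the first assertion.

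The coalgebra antimorphism is handled dually in ${\cal H}({\cal C},{\cal A}\otimes{\cal A})=(\Hom_K(L,L\otimes L),\star,u')$, with target algebra $L\otimes L$, $\overline{\triangledown}=(\triangledown\otimes\triangledown)\circ T$, and unit $u'=(\eta\otimes\eta)\circ\epsilon$. Now $\vartriangle\colon L\to L\otimes L$ is an algebra morphism from $B$ into the algebra $L\otimes L$, so Proposition \ref{InvAntip} part 1 gives that $\vartriangle\circ S$ is the two-sided convolution inverse of $\vartriangle$. I would then verify, by the same kind of composite calculation (again using $\triangledown\circ(\id_L\otimes S)\circ\vartriangle = u$, coassociativity, and the twist identities), that $(S\otimes S)\circ\vartriangle^{op}$ is a one-sided inverse of $\vartriangle$, whence $(S\otimes S)\circ\vartriangle^{op}=\vartriangle\circ S$. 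Composing both sides with the twist $t$ and using $t\circ(S\otimes S)=(S\otimes S)\circ t$ together with $t\circ t=\id$ turns this into the asserted form $\vartriangle^{op}\circ S=(S\otimes S)\circ\vartriangle$.

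The conceptual content is light, since everything reduces to the uniqueness of inverses in a monoid, already exploited for the antipode itself; the only real work, and the main obstacle, is the bookkeeping in the two one-sided-inverse computations: tracking the capital twist $T$ through the four tensor factors and applying the antipode/counit collapse in the correct tensor slot. As the paper eschews Sweedler's notation, I would carry these out as explicit equalities of composite linear maps, which is tedious but routine.
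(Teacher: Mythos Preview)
Your argument is correct and is precisely the standard proof: realize both $S\circ\triangledown$ and $\triangledown^{op}\circ(S\otimes S)$ as convolution inverses of $\triangledown$ in ${\cal H}({\cal C}\otimes{\cal C},{\cal A})$, invoke uniqueness, and dualize for the coalgebra statement. The paper itself gives no argument at all---its proof consists solely of a citation to \cite{Street} Proposition~9.1(b)---so you are in fact supplying what the paper defers; the approach you outline is the classical one and is almost certainly what the cited reference does.
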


\begin{proof}
\cite{Street} Proposition 9.1 (b).
\end{proof}

Finally an example to illustrate Proposition \ref{starAlgMorph}.

\begin{example}
\label{Polynbialg2}
We resume the Hopf algebra $(\C[X],\cdot,1;\vartriangle_2,\epsilon)$ of Example \ref{Polynbialg}. Any polynomial $g\in\C[X]$ induces via $X\mapsto g$ an algebra endomorphism $$\varphi_g:\C[X]\rightarrow\C[X]\,,\quad f\mapsto f\circ g$$
and any algebra endomorphism of $\C[X]$ is of this form. Using (\ref{PolynComult2}), one easily checks $\varphi_g\star\varphi_h(f)=f\circ(g+h)\,,$ whence
$$
\varphi_g\star\varphi_h = \varphi_{g+h}\,, \qquad \varphi_0=u \,.
$$
So we have a group isomorphism
$$(\C[X],+,0)\rightarrow (H_{\mbox{\small \C -alg}}(\C[X],\C[X]),\star,u)\,,\quad g\mapsto \varphi_g\,,$$
especially $\varphi_X=\id_{\C[X]}$, $\varphi_g^{\star-1}=\varphi_{-g}$, $S_2=\varphi_{-X}$.
Notice that $\varphi_g$ is a coalgebra morphism only if $g(X)=aX$ with $a\in \C$. Hence $S_2$ is one as we know already from Example \ref{Polynbialg}.
\end{example}

%\newpage
\section{Posets and their interval spaces}
\label{Poset}

The linear space, spanned by the intervals of a locally finite ordered set, supports canonically a multiplication and a comultiplication. Under a unitarian condition it becomes a m-weak bialgebra. This holds, too, if one passes to the quotient space w.r.t.\ a suitable equivalence relation on intervals. Many important examples are addressed.

Let ${\cal P}=(P,\preceq)$ be a countable or finite partially
ordered set (poset for short). We always suppose that it is {\bf locally
finite}, i.e.\ all {\bf intervals}
$$[a,b]:=\{x\in P \mid a\preceq x\preceq b\},\quad a,b \in P\,,$$
are finite. We write $[a,b]_\preceq$ for an interval, if it is necessary to indicate the ordering. $\intv({\cal P})$ denotes the set of all intervals $\neq \emptyset$.

Concatenation and decomposition of intervals lead to the following natural definitions.
We denote with $L=L({\cal P})$ the $K$-linear space with base $\intv({\cal P})$.
A {\bf multiplication} $\triangledown :L\otimes L\rightarrow L$ is defined
by the following values on the base
\begin{eqnarray*}
\triangledown([a,b]\otimes [c,d]) :=\left\{
\begin{array}{ll}
[a,d] & \mbox{if} \quad  b=c \\
{\bf 0} & \mbox{else}
\end{array} \right. .
\end{eqnarray*}
Similarly, a {\bf comultiplication} $\vartriangle :L\rightarrow L\otimes L$ is defined by
\begin{eqnarray*}
\label{comult}
\vartriangle([a,b]):=\sum_{a\preceq x\preceq b}
[a,x]\otimes [x,b]\,,
\end{eqnarray*}
where local finiteness is essential. One easily checks that these operations are associative and coassociative, respectively.

composition of comultiplication and multiplication $\triangledown\circ\vartriangle: L\rightarrow L$ is called {\bf Hopf square map} (see (\ref{Zsquare}) for the origin of the name). In the present context it is an important combinatorial function, essentially it counts the elements of an interval,
\begin{eqnarray}
\label{count}
\triangledown\circ\vartriangle([a,b]) = |[a,b]|\;[a,b]\,.
\end{eqnarray}

\begin{example}
\label{MatrRepr}
Let $P$ be finite. Then the partial order $\preceq$ can be extended to a total order on $P$ (\cite{Stanley} 3.5), i.e.\ we get an enumeration $P=\{a_1,a_2,\dots ,a_n\}$ of $P$ such that $a_i\preceq a_j\Rightarrow i\le j$. Applying an interval $[a_i,a_j]$ to the matrix $B_{i,j}\in M_n$ (Example \ref{MatrixBialg2}) with entry $1$ at $(i,j)$ and $0$ elsewhere, we see that $L$ can be perceived as subspace of $M_n$, where concatenation $\triangledown$ corresponds to matrix multiplication and $\sum_{k=1}^n [a_k,a_k]\in L$ to the unit matrix $\;U$. The elements of $L$ correspond to upper triangular matrices.

But the just defined comultiplication $\vartriangle_L$ on $L$ does not coincide with the comultiplication $\vartriangle_{M_n}$ of the matrix bialgebra $M_n$ (Example \ref{MatrixBialg3}), we rather get
\begin{eqnarray*}
\vartriangle_L &=& (\pi\otimes \pi)\, \circ \vartriangle_{M_n}\mid_L\,,
\end{eqnarray*}
where $\pi:M_n\rightarrow L$ denotes the natural projection, applying $B_{i,j}$ to $[a_i,a_j]$ if $a_i\preceq a_j$ and to $0$ else. Thus the compatibility condition (\ref{bi1}) for a strong bialgebra is lost. But this loss will be compensated by existence of an antipode as we shall see in Corollary \ref{IncidHopf}.
\end{example}

Before considering the m-weak bialgebra structure on $L$ we are looking for equivalence relations on the base $\intv({\cal P})$, which allow essentially to maintain the above definitions when replacing intervals by their equivalence classes.
An equivalence relation $\sim$ on $\intv({\cal P})$ is called {\bf bialgebra compatible}, if it is
\begin{enumerate}
\item
$\triangledown${\bf -compatible}, i.e.\
$$
[a,b]\sim [a',b'],\; [b,c]\sim [b',c'] \quad \Rightarrow \quad  [a,c]\sim [a',c'] ; $$
\item
$\vartriangle${\bf -compatible}, i.e.\
for $[a,b]\sim [a',b']$ there exists a bijection $f:[a,b]\rightarrow
[a',b']$ such that for all $x\in [a,b]$
$$
[a,x]\sim [a',f(x)] , \qquad [x,b]\sim [f(x),b'] ;
$$
\item
{\bf unitary}, i.e.\ all one point intervals are equivalent,\footnote{\label{FNunitary}For some results it would be sufficient to require that the one point intervals are partitioned in finitely many equivalence classes $I_k$, so that $1_L := \sum_k I_k$ exists. But our main results need a single class.}
$$
[a,a]\sim[b,b] \quad \mbox{ for all } \; a,b \in {\cal P}\,.
$$
\end{enumerate}
In the literature (e.g.\ \cite{Schmitt}) a $\vartriangle$-compatible equivalence relation is called {\it order compatible}.
Notice that $\vartriangle$-compatibility and unitarity of intervals depend only on the intervals themselves, whereas $\triangledown$-compatibility depends on that part of the poset surrounding the intervals.

\begin{example}
On a locally finite poset define $[a,b]\sim [a',b']$ iff the two intervals $([a,b],\preceq)$, $([a',b'],\preceq)$ with the induced ordering are isomorphic posets. This equivalence relation is only $\vartriangle$-com\-patible and unitary but generally not $\triangledown$-com\-patible as the poset with the following Hasse diagram shows.
\begin{center}
\begin{minipage}{6cm}
\xymatrix@=2em{
c \ar@{-}[rd] && {c'} \ar@{-}[rdd]\ar@{-}[rd]
 \\
& b \ar@{-}[d] && {b'} \ar@{-}[d]
\\
& a && {a'}
 }
\end{minipage}
\end{center}
\end{example}

Given an equivalence relation $\sim$\, on $\intv({\cal P})$, we denote the equivalence class of an interval $[a,b]$ with $[[a,b]]$ or $[[a,b]]_\sim$. The set
$\intv({\cal P},\sim):=\intv({\cal P})/\sim$ of equivalence classes of intervals is now taken as base for the $K$-linear space $L=L({\cal P},\sim)$.

\begin{proposition}
\label{IntvBialg}
Let ${\cal P}$ be a locally finite poset with a bialgebra compatible equivalence relation $\sim$ on it's intervals.  Then the linear space $L=L({\cal P},\sim)$ becomes a m-weak bialgebra $(L,\triangledown,\eta;\vartriangle,\epsilon)$, if $\;\triangledown$, $\eta$ and $\vartriangle$, $\epsilon$ are defined by
\begin{eqnarray*}
\triangledown(I\otimes J) &:=&
\sum_{x\in [a,b]:\, I=[[a,x]],\, J=[[x,b]]} [[a,b]]\;, \quad \mbox{ with } \sum_\emptyset :=0 \\
1_L &:=& [[a,a]]\,, \qquad \eta: K\hookrightarrow L\,,\quad \eta(r)= r 1_L\,; \\
\vartriangle([[a,b]])&:=& \sum_{x\in [a,b]}
[[a,x]]\otimes [[x,b]]\;, \\
\epsilon([[a,b]]) &:=& \left\{
\begin{array}{ll}
1 & \mbox{if} \quad  a=b \\
0 & \mbox{else}
\end{array} \right., \quad \epsilon: L \rightarrow K\,.
\end{eqnarray*}
If the equivalence relation is only $\triangledown$-compatible and unitary, then $(L,\triangledown,\eta)$ is an algebra.
If the equivalence relation is only $\vartriangle$-compatible, then $(L,\vartriangle,\epsilon)$ is a coalgebra.
\end{proposition}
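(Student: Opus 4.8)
The plan is to establish the three assertions in the order coalgebra, algebra, then the m-weak bialgebra identities, the real content being the well-definedness of the operations on $\sim$-classes. I would work throughout on basis elements, i.e.\ classes $[[a,b]]$, and write $\pi:L({\cal P})\to L({\cal P},\sim)$ for the linear surjection $[a,b]\mapsto[[a,b]]$ induced on bases by $\sim$. On the unquotiented space $L({\cal P})$ I use the concatenation $\triangledown_0$ and splitting $\vartriangle_0$ from the start of this section, which are associative and coassociative, together with the counit $\epsilon_0([a,b])=\delta_{a,b}$; counitarity on $L({\cal P})$ is immediate, so $(L({\cal P}),\vartriangle_0,\epsilon_0)$ is a coalgebra.

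For the coalgebra claim (only $\vartriangle$-compatibility) I would argue by descent through $\pi$. The point is that $(\pi\otimes\pi)\circ\vartriangle_0$ and $\epsilon_0$ are constant on $\sim$-classes: if $[a,b]\sim[a',b']$, the bijection $f$ furnished by $\vartriangle$-compatibility matches the summand $[a,x]\otimes[x,b]$ with $[a',f(x)]\otimes[f(x),b']$, so after applying $\pi\otimes\pi$ the two sums agree term by term; since $f$ is a bijection it also forces $|[a,b]|=|[a',b']|$, so a one-point interval is never equivalent to a larger one and $\epsilon_0$ is class-invariant. Hence $\vartriangle$ and $\epsilon$ are well defined and satisfy $\vartriangle\circ\pi=(\pi\otimes\pi)\circ\vartriangle_0$ and $\epsilon\circ\pi=\epsilon_0$, whereupon coassociativity and counitarity descend automatically from the corresponding identities on $L({\cal P})$ because $\pi$, $\pi\otimes\pi$ and $\pi\otimes\pi\otimes\pi$ are surjective.

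For the algebra claim (only $\triangledown$-compatibility and unitarity) this descent is unavailable, since the matching condition $b=c$ in $\triangledown_0([a,b]\otimes[c,d])$ is not $\sim$-invariant and $\triangledown_0$ does not factor through $\pi\otimes\pi$. I would therefore verify well-definedness of $\triangledown$ directly from its formula: if $[a,x]\sim[a',x']$ (both representing $I$) and $[x,b]\sim[x',b']$ (both representing $J$), then $\triangledown$-compatibility gives $[a,b]\sim[a',b']$, so the class obtained by gluing a representative of $I$ to one of $J$ along a shared endpoint depends only on $I$ and $J$; the multiplicity with which it occurs, namely the number of admissible split points $x$ in a fixed representative $[a,b]$, is representative-independent by the bijection $f$ of $\vartriangle$-compatibility, which restricts to a bijection between the admissible split points of $[a,b]$ and those of $[a',b']$. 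Associativity I would then get by noting that, all relevant counts being representative-independent, both $(I\star J)\star K$ and $I\star(J\star K)$ may be evaluated inside one representative $[a,d]$ admitting a triple split, where each becomes the count of chains $a\preceq x\preceq y\preceq d$ with $[[a,x]]=I$, $[[x,y]]=J$, $[[y,d]]=K$; equality of the two counts is the image under $\pi$ of the associativity of $\triangledown_0$. Unitarity of $\sim$ makes $1_L=[[a,a]]$ a well-defined two-sided unit, the only admissible split against a one-point class being the trivial one.

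Finally I would check the three m-weak bialgebra identities on basis elements, all of which localize to the behaviour of one-point intervals. Identity (\ref{bi4}) is immediate from $\epsilon([[a,a]])=1$; identity (\ref{bi2}) holds because $[a,a]$ has the single split point $a$, so $\vartriangle(1_L)=[[a,a]]\otimes[[a,a]]=1_L\otimes 1_L$; and for (\ref{bi3}) both $\epsilon\circ\triangledown(I\otimes J)$ and $\epsilon(I)\epsilon(J)$ equal $1$ exactly when $I$ and $J$ are the one-point class and $0$ otherwise, since a product can be a one-point class only through the trivial split of a one-point interval, unitarity again guaranteeing that this class is unique. I expect the genuine obstacle to be the well-definedness of $\triangledown$: unlike $\vartriangle$ it cannot be produced by a quotient, so one must argue separately that the resulting class is determined by $\triangledown$-compatibility and, in the bialgebra setting, that its multiplicity is determined by $\vartriangle$-compatibility, after which associativity, coassociativity and the unit/counit identities are essentially inherited from the interval (co)algebra on $L({\cal P})$.
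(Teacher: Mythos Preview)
Your proposal is considerably more detailed than the paper's own proof, which consists of the single sentence ``The proof is straightforward, since the properties of the equivalence relation had been defined such that $\triangledown$, $\eta$ and $\vartriangle$, $\epsilon$ are well defined, i.e.\ not depending on the intervals representing a class.'' Your descent-through-$\pi$ treatment of the coalgebra structure and your verification of the m-weak identities (\ref{bi2})--(\ref{bi4}) are correct and make explicit exactly what the paper leaves to the reader.

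There is, however, one genuine gap. In the paragraph devoted to the algebra claim you announce that only $\triangledown$-compatibility and unitarity are in force, yet you invoke the bijection $f$ coming from $\vartriangle$-compatibility to show that the multiplicity $n_{I,J}$ --- the number of admissible split points $x$ in a chosen representative $[a,b]$ --- does not depend on that representative. That hypothesis is not available in the algebra-only setting, so as written you have not established well-definedness of $\triangledown$ (nor, consequently, associativity, since your associativity argument rests on ``all relevant counts being representative-independent'') under the weaker assumptions. You appear to be aware of this, since your closing paragraph explicitly separates the two ingredients: ``the resulting class is determined by $\triangledown$-compatibility and, in the bialgebra setting, its multiplicity is determined by $\vartriangle$-compatibility.'' But this leaves the last sentence of the Proposition (the stand-alone algebra claim) unproved by your argument. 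Either a different argument for the multiplicity is needed under $\triangledown$-compatibility alone, or the paper's formulation of $\triangledown$ tacitly relies on more than it states; the paper's one-line proof does not resolve this, and neither does your proposal.
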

We will call ${\cal A}({\cal P},\sim) :=(L,\triangledown,\eta)$ the {\bf interval algebra}, ${\cal C}({\cal P},\sim) :=(L,\vartriangle,\epsilon)$ the {\bf interval coalgebra}, and  $(L,\triangledown,\eta;\vartriangle,\epsilon)$ the {\bf (m-weak) interval  bialgebra}.

Multiplication formulates more explicitly as $\triangledown(I\otimes J)=n_{I,J}\cdot [[a,b]]$ if there exist $a,b,x$ such that $I=[[a,x]]$, $J=[[x,b]]$ and $n_{I,J}$ is, for fixed $a, b$, the number of such $x$, and $\triangledown(I\otimes J)=0$ otherwise.
In most of our examples $n_{I,J}=1$ if not zero, the prominent exception is Example \ref{SubsetPoset2}.
 \\

\begin{proof}
The proof is straightforward, since the properties of the equivalence relation had been defined such that $\triangledown$, $\eta$ and $\vartriangle$, $\epsilon$ are well defined, i.e.\ not depending on the intervals representing a class.
\end{proof}

\begin{example}
\label{1equiv}
On a locally finite poset the equivalence relation $$[a,b]\sim_0 [a',b'] \quad :\Leftrightarrow\quad a=b,\; a'=b'$$ is bialgebra compatible.
\end{example}
All unitary, whence all bialgebra compatible equivalence relations are refinements of the relation in Example \ref{1equiv}.

\begin{example}
\label{Antikette0}
Let $\hat{P}=P\cup\hat{0}$ be an antichain $P$ with adjoined minimal element $\hat{0}$. With the bialgebra compatible equivalence relation $\sim_0$ from Example \ref{1equiv} and, with the notations of Proposition \ref{IntvBialg}, the comultiplication becomes
\begin{eqnarray*}
\vartriangle([[a,b]])&=& \left\{
\begin{array}{ll}
1_L\otimes 1_L & \mbox{if} \quad  a=b \\
1_L\otimes [[\hat{0},b]] + [[\hat{0},b]]\otimes 1_L
& \mbox{if} \quad a=\hat{0}\neq b
\end{array} \right. \,.
\end{eqnarray*}
\end{example}

\begin{example}
\label{SubsetPoset}
Let $\Omega$ be an arbitrary set and
${\cal P}_f(\Omega) := \{ A\subseteq\Omega \mid A$ finite\}, then
${\cal P}=({\cal P}_f(\Omega),\subseteq)$ is a locally finite poset.
There is a natural equivalence relation on the set of intervals,
$$
[A,B]\sim_1 [A',B'] \quad :\Leftrightarrow\quad B\setminus A = B'\setminus A' \,.
$$
It is bialgebra compatible and $\;(L({\cal P},\sim_1),\triangledown,\eta;\vartriangle,\epsilon) \;$ is a m-weak bialgebra. Via $[[A,B]]\mapsto B\setminus A$ we identify the base $\intv({\cal P},\sim_1)$ of $L({\cal P},\sim_1)$ with ${\cal P}_f(\Omega)$. Then, for base elements $A,B\in {\cal P}_f(\Omega)$, multiplication is disjoint union,
\begin{eqnarray*}
\triangledown(A\otimes B) = \left\{
\begin{array}{ll}
A\cup B & \mbox{if }\, A\cap B=\emptyset \\
0 & \mbox{else}
\end{array} \right.\,,
\end{eqnarray*}
with $\emptyset$ as unit. Comultiplication is the sum of all decompositions of the set in two sets,
\begin{eqnarray*}
\vartriangle(A) = \sum_{B\subseteq A} B\otimes (A\setminus B)\,,
\end{eqnarray*}
with counit $\varepsilon(A)=\delta_{\emptyset,A}$.

In case of finite $\Omega$, say $\Omega=\{1,\dots,n\}$, we get an algebra isomorphism
\begin{eqnarray*}
{\cal A}({\cal P},\sim_1) & \rightarrow & K[X_1,\dots,X_n]/(X_1^2,\dots,X_n^2) \\
A & \mapsto & \prod_{a\in A} X_a \mod (X_1^2,\dots,X_n^2)\,.
\end{eqnarray*}
of the interval algebra with the algebra of polynomials in $n$ variables modulo the ideal, generated by the squared variables.
\end{example}

\begin{example}
\label{SubsetPoset2}
%Let $K=\C$.
Regarding the same poset
${\cal P}=({\cal P}_f(\Omega),\subseteq)$ as in Example \ref{SubsetPoset} there is another equivalence relation on the set of intervals,
$$
[A,B]\sim_2 [A',B'] \quad :\Leftrightarrow\quad |B\setminus A| = |B'\setminus A'| .
$$
It is again bialgebra compatible and $\;(L({\cal P},\sim_2),\triangledown,\eta;\vartriangle,\epsilon) \;$ is a m-weak bialgebra. Denote with $I_k$ the class of intervals $[A,B]$, with $|B\setminus A|=k$, then
\begin{eqnarray*}
\triangledown(I_k\otimes I_l) = \left\{
\begin{array}{ll}
{{k+l}\choose k} I_{k+l} & \mbox{if }\, k+l\le|\Omega| \\
0 & \mbox{else}
\end{array} \right. \,, \qquad \vartriangle(I_n) =\sum_{k=0}^n {n\choose k} I_k\otimes I_{n-k}\,.
\end{eqnarray*}
We get an algebra epimorphism
\begin{eqnarray*}
K[X] \rightarrow {\cal A}({\cal P}_f,\sim_2) \,, \quad
X^k \mapsto \left\{
\begin{array}{ll}
k!\, I_k & \mbox{if }\, k\le|\Omega| \\
0 & \mbox{else}
\end{array} \right. \,.
\end{eqnarray*}
For infinite $\Omega$ it is an isomorphism and, if $\Omega$ is finite, the kernel is the ideal $(X^{|\Omega|+1})$.
\end{example}

\begin{example}
\label{Nle}
$(\N_0,\le )$ is the poset of natural numbers
including $0$ with the usual total ordering. It is locally finite. Here we have the natural relation
$$
[a,b]\sim_n [a',b'] \quad :\Leftrightarrow\quad b-a = \,b'- a' \,.
$$
It is bialgebra compatible and $\;(L((\N_0,\le),\sim_n),\triangledown,\eta;\vartriangle,\epsilon) \;$ is a m-weak bialgebra. Denoting with $I_n$ the class of intervals $[a,b]$, with $b-a=n$, then
$$
\triangledown(I_m\otimes I_n) = I_{m+n} \,,\qquad \vartriangle I_n =\sum_{k=0}^n I_k\otimes I_{n-k}\,.
$$
\end{example}

\begin{example}
\label{NDiv}
$(\N,\mid )$ is the set of natural numbers with divisibility as partial order. It is locally finite and the smallest element is $1$. Here again we have a natural relation
$$
[a,b]\sim_n [a',b'] \quad :\Leftrightarrow\quad \frac{b}{a} = \frac{b'}{a'} \,.
$$
It is bialgebra compatible and $\;(L((\N,\mid),\sim_n),\triangledown,\eta;\vartriangle,\epsilon) \;$ is a m-weak bialgebra.
Denoting with $I_n$ the class of intervals $[a,b]$, with $\frac{b}{a}=n$, then
$$
\triangledown(I_m\otimes I_n) = I_{mn} \,,\qquad \vartriangle I_n =\sum_{d\mid n} I_d\otimes I_{\frac{n}{d}}\,.
$$
\end{example}

Let ${\cal P}$ be a locally finite poset with a bialgebra compatible equivalence relation $\sim$. We denote the convolution algebra $\;{\cal H}({\cal C},{\cal A})\;$ of
the  coalgebra $\;{\cal C}=(L({\cal P},\sim),\vartriangle,\epsilon)$ and the algebra  $\;{\cal A}=(L({\cal P},\sim),\triangledown,\eta)$ with
$$
{\cal H}({\cal P},\sim):= (H,\star\,,u)\;,\qquad H=\Hom_K(L,L)\,, \quad L=L({\cal P},\sim)\,,
$$
and call it the {\bf convolution algebra} of $({\cal P},\sim)$.
% Recall that $u=\epsilon\circ\eta$.

For later use let us suppose that our poset $\cal{P}$ has a minimal element $\hat{0}$. This had been the case in Examples \ref{Antikette0} through \ref{NDiv}. An {\bf atom} of $\cal{P}$ is an element $a$ such that the interval $[\hat{0},a]$ has exactly two elements. Then for $\varphi, \psi \in H$ preserving the unit
\begin{eqnarray}
\varphi\star\psi([[\hat{0},a]])=\varphi([[\hat{0},a]])+\psi([[\hat{0},a]]) \qquad  &\mbox{if } a \mbox{ is an atom. } %\\ \nonumber &\varphi(1_L)=1_L, \psi(1_L)=1_L.
\end{eqnarray}

%\newpage
\section{The incidence algebra}
\label{Incid}

The incidence algebra of a locally finite poset is the dual of the interval coalgebra. Under a unitarian assumption it comprises the interval algebra as subalgebra and for finite posets both coincide. The incidence algebra is naturally embedded in the convolution algebra of the poset (Theorem \ref{IsubH}) and thus becomes a m-weak Hopf algebra. The antipode is the classical Möbius function.
In this section ${\cal P}=(P,\preceq)$ is a locally finite poset and
$\sim$ a $\vartriangle$-compatible equivalence relation on $\intv({\cal P})$.

In combinatorics the {\bf incidence algebra}
$$
{\cal I}({\cal P},\sim)=(K^{\mbox{\small Intv}({\cal P},\sim)},\star,U)
$$
is defined as the linear space $K^{\mbox{\small Intv}({\cal P},\sim)}=\{\Phi:\intv({\cal P},\sim)\rightarrow K\}$ furnished with
\begin{eqnarray*}
\Phi\star\Psi([[a,b]])&:=& \sum_{a\preceq x\preceq b}
\Phi([[a,x]])\cdot \Psi([[x,b]])\;, \\
U([[a,b]]) &:=& \left\{
\begin{array}{ll}
1 & \mbox{if} \quad  a=b \\
0 & \mbox{else}
\end{array} \right.\,.
\end{eqnarray*}
We write ${\cal I}({\cal P}):={\cal I}({\cal P},=)$ if $\sim$ is the trivial equivalence relation $=$.
The name incidence {\it algebra} is justified in

\begin{proposition}
\label{IncidAlgInv}
Let ${\cal P}=(P,\preceq)$ be a locally finite poset and
$\sim$ a $\vartriangle$-compatible equivalence relation. Then
${\cal I}({\cal P},\sim)$ is an algebra with multiplication $\star$ and unit $U$. An incidence function $\Phi\in{\cal I}({\cal P},\sim)$ is right invertible iff it is left invertible and this holds iff $\Phi([[a,a]])\neq 0$ for all $a\in P$.
\end{proposition}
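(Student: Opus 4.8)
The plan is to treat the two assertions separately, getting the algebra structure essentially for free and then concentrating on the invertibility statement. For the algebra structure I would identify ${\cal I}({\cal P},\sim)$ with the dual of the interval coalgebra. Since $\sim$ is $\vartriangle$-compatible, Proposition \ref{IntvBialg} gives that ${\cal C}({\cal P},\sim)=(L,\vartriangle,\epsilon)$ with $L=L({\cal P},\sim)$ is a coalgebra, and $L$ has the classes $\intv({\cal P},\sim)$ as basis. The underlying space $K^{\mbox{\small Intv}({\cal P},\sim)}$ of the incidence algebra is exactly the dual $L^{*}$, the counit $\epsilon$ coincides with $U$ (both are $1$ on diagonal classes and $0$ otherwise), and unfolding the dual product $\vartriangle^{*}|_{L^{*}\otimes L^{*}}$ of Example \ref{CoalgDual} on a class $[[a,b]]$ by means of $\vartriangle([[a,b]])=\sum_{a\preceq x\preceq b}[[a,x]]\otimes[[x,b]]$ reproduces precisely $\Phi\star\Psi$. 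Hence ${\cal I}({\cal P},\sim)={\cal C}({\cal P},\sim)^{*}$, and Example \ref{CoalgDual} delivers at once well-definedness on classes, associativity, and the unit law. (Alternatively one checks associativity directly as a triple sum over chains $a\preceq x\preceq y\preceq b$, well-definedness following from $\vartriangle$-compatibility via the bijection between equivalent intervals.)

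For the invertibility claim the easy half is necessity. Evaluating a convolution on a one-point class is diagonal: since $a$ is the only element of $[a,a]$, one has $(\Phi\star\Psi)([[a,a]])=\Phi([[a,a]])\,\Psi([[a,a]])$. Thus a right inverse forces $\Phi([[a,a]])\Psi([[a,a]])=U([[a,a]])=1$, and a left inverse forces the symmetric relation; either way $\Phi([[a,a]])\neq 0$ for every $a$.

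Conversely, assume $\Phi([[a,a]])\neq 0$ for all $a$. The plan is to pass to the plain incidence algebra ${\cal I}({\cal P})={\cal I}({\cal P},=)$, invert there, and descend. Pullback along the quotient, $\iota(\Phi)([a,b]):=\Phi([[a,b]])$, is an injective algebra homomorphism ${\cal I}({\cal P},\sim)\rightarrow {\cal I}({\cal P})$ whose image consists of the $\sim$-invariant functions (those with $\tilde\Phi([a,b])=\tilde\Phi([a',b'])$ whenever $[a,b]\sim[a',b']$). In ${\cal I}({\cal P})$ the hypothesis $\tilde\Phi([a,a])\neq 0$ lets one solve $\tilde\Phi\star\tilde\Psi=U$ and $\tilde\Psi'\star\tilde\Phi=U$ by the classical recursion on $|[a,b]|$, for instance $\tilde\Psi([a,b])=-\tilde\Phi([a,a])^{-1}\sum_{a\prec x\preceq b}\tilde\Phi([a,x])\tilde\Psi([x,b])$, the intervals $[x,b]$ being strictly shorter; here no well-definedness issue arises since intervals are not identified, and associativity makes the left and right inverses coincide into a two-sided inverse $\tilde\Psi$. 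It then remains to show that $\tilde\Psi$ is again $\sim$-invariant, so that $\tilde\Psi=\iota(\Psi)$ for a (unique) $\Psi\in{\cal I}({\cal P},\sim)$, whence injectivity of $\iota$ and $\iota(\Phi\star\Psi)=\tilde\Phi\star\tilde\Psi=U=\iota(U)$ give $\Phi\star\Psi=\Psi\star\Phi=U$.

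The main obstacle is exactly this $\sim$-invariance of the inverse, and it is where $\vartriangle$-compatibility is essential. I would prove it by induction on $|[a,b]|$. Given $[a,b]\sim[a',b']$, take the bijection $f:[a,b]\rightarrow[a',b']$ provided by $\vartriangle$-compatibility; its mere existence shows $\sim$ preserves cardinality. Applying this to $x=a$ and $x=b$, the intervals $[a',f(a)]$ and $[f(b),b']$ are then singletons, which forces $f(a)=a'$ and $f(b)=b'$. Consequently $f$ restricts to a bijection of $\{x:a\prec x\preceq b\}$ onto $\{y:a'\prec y\preceq b'\}$, and combining $\sim$-invariance of $\tilde\Phi$ (so $\tilde\Phi([a,x])=\tilde\Phi([a',f(x)])$ and $\tilde\Phi([a,a])=\tilde\Phi([a',a'])$) with the induction hypothesis applied to the shorter pairs $[x,b]\sim[f(x),b']$ (so $\tilde\Psi([x,b])=\tilde\Psi([f(x),b'])$) transports the recursion defining $\tilde\Psi([a,b])$ term by term onto the one defining $\tilde\Psi([a',b'])$. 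Hence $\tilde\Psi([a,b])=\tilde\Psi([a',b'])$, which completes the descent and the proof; in particular right invertibility, left invertibility, and the condition $\Phi([[a,a]])\neq 0$ for all $a$ are all equivalent.
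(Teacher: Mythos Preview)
Your argument is correct and follows the same line as the paper, which simply embeds ${\cal I}({\cal P},\sim)$ as the subspace of $\sim$-invariant functions in ${\cal I}({\cal P})$ and then cites Stanley for the classical case. You go further than the paper in two respects: you derive the algebra structure from the coalgebra dual (this is exactly the content of the paper's subsequent Proposition~\ref{IncCStern}), and you actually carry out the step the paper leaves implicit, namely that the inverse computed in ${\cal I}({\cal P})$ is again $\sim$-invariant, via the inductive use of the $\vartriangle$-compatibility bijection.
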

Multiplication $\star$ is often called {\bf convolution}, see Theorem \ref{IsubH}. \\

\begin{proof}
${\cal I}({\cal P},\sim)$ can be perceived as subspace of ${\cal I}({\cal P})$, where an application $\Phi:\intv({\cal P})\rightarrow K$ belongs to ${\cal I}({\cal P},\sim)$ iff it is constant on the equivalence classes of $\sim$. Therefore it is sufficient to prove the case ${\cal I}({\cal P})$, what is done in e.g.\ \cite{Stanley} 3.6.2.
\end{proof}

We expand the domain of the incidence functions $\Phi\in {\cal I}({\cal P},\sim)$ to the linear space $L({\cal P},\sim)$ with base $\intv({\cal P},\sim)\;$,
\begin{eqnarray*}
\Phi &\mapsto & \overline{\Phi}:L({\cal P},\sim)\rightarrow K\quad \mbox{ with }\; \overline{\Phi}|_{\mbox{\small Intv}({\cal P},\sim)} =\Phi \,,
\end{eqnarray*}
i.e.\ $\overline{\Phi}$ denotes the linear form, coinciding on the base $\intv({\cal P},\sim)\;$ with $\Phi$.

\begin{proposition}
\label{IncCStern}
The incidence algebra ${\cal I}({\cal P},\sim)$ is canonically isomorphic to the dual ${\cal C}^*$ of the interval coalgebra ${\cal C}:=(L({\cal P},\sim),\vartriangle,\epsilon)$\,,
\begin{eqnarray*}
{\cal I}({\cal P},\sim) \;\;\rightarrow\; {\cal
C}^* \,,\qquad \Phi \mapsto \overline{\Phi}\,.
\end{eqnarray*}
\end{proposition}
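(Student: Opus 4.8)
The plan is to verify directly that $\Phi\mapsto\overline{\Phi}$ is an isomorphism of algebras, treating the underlying linear bijection, the unit, and the multiplication in turn. Both sides are already known to be well-defined objects: $\,{\cal C}=(L,\vartriangle,\epsilon)$ is a coalgebra by the last assertion of Proposition \ref{IntvBialg} (here $\vartriangle$-compatibility of $\sim$ is exactly what is needed), so its dual ${\cal C}^*$ exists in the sense of Example \ref{CoalgDual}; and ${\cal I}({\cal P},\sim)$ is an algebra by Proposition \ref{IncidAlgInv}. The only content is that the two structures coincide under $\Phi\mapsto\overline{\Phi}$.

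First, the underlying linear bijection is essentially free. An incidence function $\Phi\in K^{\mathrm{Intv}({\cal P},\sim)}$ assigns an arbitrary scalar to each element of the base $\intv({\cal P},\sim)$ of $L$, while a linear form in $C^*=\Hom_K(L,K)$ is uniquely determined by, and may take arbitrary values on, that same base. Hence $\Phi\mapsto\overline{\Phi}$ is a linear isomorphism of the underlying spaces, requiring no calculation. For the units, the unit of ${\cal C}^*$ is the counit $\epsilon$ by Example \ref{CoalgDual}, and since $\epsilon([[a,b]])=U([[a,b]])$ equals $1$ for $a=b$ and $0$ otherwise, we have $\overline{U}=\epsilon$, so the unit is preserved.

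The heart of the proof is multiplicativity, $\overline{\Phi\star\Psi}=\overline{\Phi}\star\overline{\Psi}$, where the right-hand $\star$ is the product of ${\cal C}^*$. Recall from Example \ref{CoalgDual} that this product is $\vartriangle^*$ restricted to $C^*\otimes C^*$; concretely, under the embedding $\varrho:C^*\otimes C^*\hookrightarrow (C\otimes C)^*$ of Lemma \ref{Rho}, one has $(\overline{\Phi}\star\overline{\Psi})(c)=(\overline{\Phi}\,\underline{\otimes}\,\overline{\Psi})(\vartriangle(c))$ for $c\in C$. Evaluating at $c=[[a,b]]$ and using the explicit formula $\vartriangle([[a,b]])=\sum_{a\preceq x\preceq b}[[a,x]]\otimes[[x,b]]$ together with $\overline{\Phi}\,\underline{\otimes}\,\overline{\Psi}\,([[a,x]]\otimes[[x,b]])=\Phi([[a,x]])\,\Psi([[x,b]])$ reproduces exactly the convolution defining $\Phi\star\Psi$ on the base. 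Agreement on a base extends by linearity, which completes the identification.

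The only point demanding care — the ``obstacle,'' such as it is — is the bookkeeping around the dual multiplication. In infinite dimensions $\varrho$ is merely a monomorphism (Lemma \ref{Rho}), so the product of ${\cal C}^*$ is defined via $\vartriangle^*$ only on the image of $\varrho$; and the evaluation $(\overline{\Phi}\,\underline{\otimes}\,\overline{\Psi})\circ\vartriangle$ is legitimate precisely because local finiteness of ${\cal P}$ keeps $\vartriangle([[a,b]])$ a finite sum of simple tensors. Once these definitions are correctly unwound, no genuine difficulty remains.
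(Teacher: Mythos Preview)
Your proof is correct and follows essentially the same approach as the paper: verify $\overline{U}=\epsilon$ for the unit, and check multiplicativity $\overline{\Phi\star\Psi}=\overline{\Phi}\star\overline{\Psi}$ by evaluating both sides on base elements $[[a,b]]$ via the explicit comultiplication formula. The paper packages the latter step in a commuting diagram rather than writing out the sums, but the content is identical; your additional remarks on the linear bijection and on the role of local finiteness in keeping $\vartriangle([[a,b]])$ a finite sum are correct and simply make explicit what the paper leaves tacit.
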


\begin{proof}
Clearly $\overline{U}=\epsilon$, the counit of ${\cal C}$ and the unit of ${\cal C}^*$ (see Example \ref{CoalgDual}). For the multiplication the following commuting diagram proves the desired equation.
\begin{center}
\begin{minipage}{6cm}
\xymatrix@=5em{
{L}
\ar[rrd]^{\quad\overline{\Phi\star\Psi} = \overline{\Phi}\star\overline{\Psi}}
\ar[ddr]_{\vartriangle}
 \\
& {\intv({\cal P},\sim)}
\ar[r]_{\Phi\star\Psi}
\ar[d]^{\vartriangle} \ar[ul] & K
 \\
& {L \otimes L}
\ar[r]^{\overline{\Phi}\otimes\overline{\Psi}}
& {K \otimes K}
\ar[u]^{\triangledown_K}
 }
\end{minipage}
\end{center}
Here $L$ abbreviates $L({\cal P},\sim)$ as usual and $\intv({\cal P},\sim)\hookrightarrow L({\cal P},\sim)$ is the embedding of the base in it's span.
\end{proof}
\noindent
In the sequel we identify ${\cal I}({\cal P},\sim)$ with ${\cal C}^*$ and perceive $\Phi\in {\cal I}({\cal P},\sim)$ as application $L\rightarrow K$.

\begin{proposition}
\label{IntvIncidAlg}
Let ${\cal P}=(P,\preceq)$ be a locally finite poset with a bialgebra compatible equivalence relation $\sim$ on it's intervals. Then the interval algebra ${\cal A}({\cal P},\sim)=(L({\cal P},\sim),\triangledown,\eta)$ is a subalgebra of the incidence algebra ${\cal I}({\cal P},\sim)$.

Furthermore, ${\cal A}({\cal P},\sim)={\cal I}({\cal P},\sim)$ iff $P$ is finite.
\end{proposition}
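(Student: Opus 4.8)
The plan is to realise both structures inside the single space $L^{*}=\Hom_K(L,K)$, where $L=L({\cal P},\sim)$, and to show that the interval algebra is exactly the subalgebra of \emph{finitely supported} functionals. First I would invoke Proposition \ref{IncCStern} to identify ${\cal I}({\cal P},\sim)$ with the dual ${\cal C}^{*}$ of the interval coalgebra, so that the incidence algebra becomes $(L^{*},\star,\epsilon)$ with $\star$ the convolution dual to $\vartriangle$ (Example \ref{CoalgDual}); under this identification $L^{*}\cong K^{\mbox{\small Intv}({\cal P},\sim)}$ is the space of \emph{all} functions on the basis $\intv({\cal P},\sim)$. Since $\intv({\cal P},\sim)$ is a basis of $L$, the coordinate functionals give a canonical linear injection $\iota:L\to L^{*}$, sending a class $[[a,b]]$ to its indicator $\delta_{[[a,b]]}$ (with $\delta_I(J)=1$ if $J=I$ and $0$ otherwise), whose image is precisely the functionals of finite support.

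The core step is to verify that $\iota$ is an algebra morphism ${\cal A}({\cal P},\sim)\to{\cal I}({\cal P},\sim)$. For the unit, unitarity of $\sim$ yields the single one-point class $1_L=[[a,a]]$; as $\vartriangle$-compatibility preserves cardinality, no larger interval lies in this class, so $\iota(1_L)=\delta_{[[a,a]]}=U$. For multiplicativity it suffices to treat basis elements $I,J$, and evaluating at a class $[[c,d]]$ one computes
$$(\delta_I\star\delta_J)([[c,d]])=\sum_{c\preceq x\preceq d}\delta_I([[c,x]])\,\delta_J([[x,d]])=|\{x\in[c,d]: [[c,x]]=I,\ [[x,d]]=J\}|.$$
This is exactly the statement that $\triangledown$ is adjoint to $\vartriangle$ under the coordinate pairing. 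Here $\triangledown$-compatibility makes the concatenation class $[[a,b]]$ of $I$ and $J$ unique, so the count vanishes unless $[[c,d]]=[[a,b]]$, while $\vartriangle$-compatibility (the bijection between representatives) makes the count on $[[a,b]]$ independent of the chosen representative and equal to the coefficient $n_{I,J}$ of Proposition \ref{IntvBialg}, so that $\delta_I\star\delta_J=\iota(\triangledown(I\otimes J))$. Thus $\iota$ is an injective algebra morphism and its image is a subalgebra of ${\cal I}({\cal P},\sim)$, which is the first assertion.

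For the second assertion, ${\cal A}({\cal P},\sim)={\cal I}({\cal P},\sim)$ holds iff $\iota$ is surjective, i.e.\ iff every function on $\intv({\cal P},\sim)$ has finite support, i.e.\ iff the set $\intv({\cal P},\sim)$ is finite. The implication $\Leftarrow$ is immediate: a finite $P$ has finitely many intervals, hence finitely many $\sim$-classes. I expect the converse to be the main obstacle. The natural tool is again that $\vartriangle$-compatibility forces equivalent intervals to be equinumerous, so finiteness of $\intv({\cal P},\sim)$ bounds all interval cardinalities by some $N$; the remaining combinatorial step is to conclude $|P|<\infty$ from such a bound. This last step is delicate and must use more than local finiteness — an infinite antichain (where, under unitarity, both ${\cal A}$ and ${\cal I}$ collapse to $K$) shows that a non-degeneracy hypothesis on the poset is needed for the converse, and I would isolate exactly the structural assumption under which the bound on interval sizes propagates to finiteness of $P$.
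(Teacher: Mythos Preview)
Your argument for the first assertion is correct and is exactly the paper's approach: embed $L$ in $K^{\intv({\cal P},\sim)}$ via $I\mapsto\Phi_I=\delta_I$, then verify $\Phi_I\star\Phi_J=\Phi_{\triangledown(I\otimes J)}$ by the same count $|\{x\in[a,b]:[[a,x]]=I,\,[[x,b]]=J\}|$ and check $\Phi_{[[a,a]]}=U$. Your remark that $\vartriangle$-compatibility forces equivalent intervals to be equinumerous (so that $U$ really is $\delta_{1_L}$) is a detail the paper leaves implicit.

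On the second assertion you are more careful than the paper, and rightly so. The paper's entire proof of the ``iff'' is the sentence ``Since any $\alpha\in L({\cal P})$ is a linear combination of finitely many intervals the last assertion is clear,'' which at best gives $\Leftarrow$. Your observation is correct: what the embedding actually characterizes is finiteness of the basis $\intv({\cal P},\sim)$, i.e.\ ${\cal A}({\cal P},\sim)={\cal I}({\cal P},\sim)$ iff $\intv({\cal P},\sim)$ is finite. Your infinite-antichain example (with the unitary relation $\sim_0$ of Example~\ref{1equiv}) is a genuine counterexample to the ``only if'' direction as stated: there $\intv({\cal P},\sim)$ is a single point, both algebras equal $K$, yet $P$ is infinite. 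So the gap you identify is not in your proof but in the proposition itself; the clean fix is to replace ``$P$ is finite'' by ``$\intv({\cal P},\sim)$ is finite'' (the two coincide for the trivial relation $\sim\,=\,=$, which may be what the author had in mind).
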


\begin{proof}
Clearly $L({\cal P},\sim)$ is linearly embedded in $K^{\mbox{\small Intv}({\cal P},\sim)}$ by means of
$$
\alpha:=\sum_{I\in{\mbox{\small Intv}({\cal P},\sim)}} \alpha_I \,I \;\mapsto\; (I\mapsto \alpha_I)
$$
Denote with $\Phi_\alpha$ the image of $\alpha$ under this embedding, then $\Phi_I(J)=\delta_{I,J}$. We have to show that
\begin{eqnarray}
\label{1star}
\Phi_I\star\Phi_J = \Phi_{\triangledown(I\otimes J)}\,,\qquad \Phi_{[[a,a]]}=U\,.
\end{eqnarray}
The second equation being obvious, we check the first, using Proposition \ref{IntvBialg}.
\begin{eqnarray*}
\Phi_I\star\Phi_J ([[a,b]]) %&=&
%\triangledown\circ(\Phi_I\otimes\Phi_J)\,\circ\vartriangle([[a,b]]) \\
&=& \sum_{x\in [a,b]} \Phi_I([[a,x]]) \Phi_J ([[x,b]]) \\
&=& |\{x\in [a,b] \mid [[a,x]]=I,\; [[x,b]]=J \}| \\
&=& \Phi_{\triangledown(I\otimes J)}([[a,b]])\,.
\end{eqnarray*}
Since any $\alpha \in L({\cal P})$ is a linear combination of finitely many intervals the last assertion is clear.
\end{proof}

\noindent
Proposition \ref{IntvIncidAlg} holds, too, with finite $P$ and the trivial equivalence relation $=$, if one modifies the second equation in (\ref{1star}) to $\Phi_{\sum_{a\in P}[a,a]}=U$, see footnote \ref{FNunitary}.

Often, the incidence algebra can be embedded in the convolution algebra. Recall from end of Section \ref{Poset} that the latter is defined only for bialgebra compatible equivalence relations.

\begin{theorem}
\label{IsubH}
Let ${\cal P}=(P,\preceq)$ be a locally finite poset and
$\sim$ a bialgebra compatible equivalence relation on $\intv({\cal P})$ with the property, that the product of intervals is again an interval or zero, equivalently (cf.\ Proposition \ref{IntvBialg})
\begin{eqnarray}
\label{1xinIntv}
\triangledown([[a,x]]\otimes [[x,b]]) &=& [[a,b]] \quad\mbox{ for } a\preceq x\preceq b\,.
\end{eqnarray}
For $\Phi\in {\cal I}({\cal P},\sim)$ let $\widehat{\Phi}\in {\cal H}({\cal P},\sim)$ be the linear application given on the base $\intv({\cal P},\sim)$ by
\begin{eqnarray*}
\widehat{\Phi}(I) := \Phi(I) I\,,\qquad I \in \intv({\cal P},\sim)\,.
\end{eqnarray*}
Then the application
\begin{eqnarray*}
{\cal I}({\cal P},\sim) \;\;\hookrightarrow\; {\cal
H}({\cal P},\sim) \,,\quad \Phi \mapsto \widehat{\Phi}\,
\end{eqnarray*}
is a monomorphism of algebras. The image $\widehat{{\cal I}}({\cal P},\sim)$ of ${\cal I}({\cal P},\sim)$ is a subalgebra of ${\cal
H}({\cal P},\sim)$.

Furthermore $f\in \widehat{{\cal I}}({\cal P},\sim)$ respects the unit, $f\circ \eta=\eta$, iff it respects the counit, $\epsilon\circ f=\epsilon$. And this holds iff $f$ has a convolution inverse $f^{\star -1}\in \widehat{{\cal I}}({\cal P},\sim)$.

\end{theorem}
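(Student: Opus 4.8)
The plan is to regard $\widehat{\Phi}$ as the diagonal operator on $L=L({\cal P},\sim)$ that rescales each basis interval $I$ by the scalar $\Phi(I)$, and to check in turn that $\Phi\mapsto\widehat{\Phi}$ is linear, injective, unital and multiplicative. Linearity is immediate from $\widehat{\Phi}(I)=\Phi(I)\,I$, and injectivity is just as quick: if $\widehat{\Phi}=0$ then $\Phi(I)\,I=0$ for every basis interval $I$, so $\Phi(I)=0$ for all $I$ and $\Phi=0$. For unitality I would verify $\widehat{U}=u$ on the base, where $U$ is the unit of ${\cal I}({\cal P},\sim)$ and $u=\eta\circ\epsilon$ is the unit of ${\cal H}({\cal P},\sim)$: since $U([[a,b]])=\delta_{a,b}$ and $[[a,a]]=1_L$ by unitarity of $\sim$, both $\widehat{U}$ and $\eta\circ\epsilon$ send $[[a,b]]$ to $\delta_{a,b}\,1_L$.

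The substance of the first assertion is multiplicativity, $\widehat{\Phi\star\Psi}=\widehat{\Phi}\star\widehat{\Psi}$, and this is where hypothesis (\ref{1xinIntv}) is indispensable; I expect it to be the main obstacle. Evaluating the right-hand side $\triangledown\circ(\widehat{\Phi}\otimes\widehat{\Psi})\circ\vartriangle$ on a basis interval $[[a,b]]$ and inserting the comultiplication formula of Proposition \ref{IntvBialg} yields
\[
\widehat{\Phi}\star\widehat{\Psi}\,([[a,b]]) \;=\; \sum_{x\in[a,b]}\Phi([[a,x]])\,\Psi([[x,b]])\;\triangledown\bigl([[a,x]]\otimes[[x,b]]\bigr).
\]
Hypothesis (\ref{1xinIntv}) collapses each $\triangledown([[a,x]]\otimes[[x,b]])$ to $[[a,b]]$ with coefficient exactly $1$, so the sum equals $\bigl(\sum_{x}\Phi([[a,x]])\Psi([[x,b]])\bigr)[[a,b]]=(\Phi\star\Psi)([[a,b]])\,[[a,b]]=\widehat{\Phi\star\Psi}([[a,b]])$. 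Without (\ref{1xinIntv}) the coefficient $n_{I,J}$ of Proposition \ref{IntvBialg} can exceed $1$ (Example \ref{SubsetPoset2}) and the diagonal embedding fails to be multiplicative, so this collapse is the crux. Once $\Phi\mapsto\widehat{\Phi}$ is an algebra monomorphism, its image $\widehat{{\cal I}}({\cal P},\sim)$ is automatically a subalgebra of ${\cal H}({\cal P},\sim)$ containing $u=\widehat{U}$, which is the second assertion.

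For the final equivalence I would reduce each condition on $f=\widehat{\Phi}$ to a statement about the single scalar $\Phi([[a,a]])$ attached to the one-point class, which is well defined since all $[[a,a]]$ coincide with $1_L$. Respecting the unit, $f\circ\eta=\eta$, means $f(1_L)=1_L$, i.e.\ $\Phi([[a,a]])\,1_L=1_L$, hence $\Phi([[a,a]])=1$; respecting the counit, $\epsilon\circ f=\epsilon$, is automatic off the diagonal (where $\epsilon$ vanishes) and on the diagonal reduces to the identical equation, giving the first equivalence. To connect with invertibility I would invoke Proposition \ref{IncidAlgInv}: $\Phi$ is invertible in ${\cal I}({\cal P},\sim)$ iff $\Phi([[a,a]])\neq 0$, and since $\Phi\mapsto\widehat{\Phi}$ is an algebra isomorphism onto its image, the convolution inverse of $f$ lies in $\widehat{{\cal I}}({\cal P},\sim)$, namely $\widehat{\Phi^{\star-1}}$, exactly when this nonvanishing holds; evaluating a putative inverse at $1_L$ recovers the condition in the reverse direction. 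The point I would take care to pin down is the passage between the normalization $\Phi([[a,a]])=1$ enforced by the unit and counit conditions and the bare nonvanishing that governs invertibility.
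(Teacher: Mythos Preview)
Your argument for the algebra monomorphism is essentially identical to the paper's: linearity, $\widehat{U}=u$ by unitarity of $\sim$, and the multiplicativity computation on a basis interval using (\ref{1xinIntv}) to collapse each $\triangledown([[a,x]]\otimes[[x,b]])$ to $[[a,b]]$. The paper does not mention injectivity explicitly, but your one-line check is harmless.

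Where you go beyond the paper is in the ``Furthermore'' clause, which the paper's proof simply omits. Your first equivalence is correct: both $f\circ\eta=\eta$ and $\epsilon\circ f=\epsilon$ reduce, as you say, to the single equation $\Phi([[a,a]])=1$. For the second equivalence you quite rightly hesitate. The forward direction is fine (if $\Phi([[a,a]])=1$ then $\Phi$ is invertible by Proposition~\ref{IncidAlgInv}, and $\widehat{\Phi^{\star-1}}$ is the inverse of $f$), but your ``evaluating a putative inverse at $1_L$'' only yields $\Phi([[a,a]])\Psi([[a,a]])=1$, hence $\Phi([[a,a]])\neq 0$, not $\Phi([[a,a]])=1$. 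The gap you flag in your last sentence is genuine: as literally stated, invertibility of $f$ in $\widehat{\cal I}({\cal P},\sim)$ is equivalent only to $\Phi([[a,a]])\neq 0$, which is strictly weaker than respecting the unit. So one implication of the theorem's final ``iff'' fails without an additional normalization, and no argument will close it; your instinct to be careful there is well placed.
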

$\widehat{\Phi}$ applies an interval class $I$ to a scalar
multiple of this interval class, whence regarding $\widehat{\Phi}$
as linear operator, it's spectrum consists of the values of $\Phi:\intv({\cal P},\sim)\rightarrow K$
and the interval classes form a base of
$\widehat{\Phi}$-eigenvectors. \\

\begin{proof}
The application $\;\widehat{ }\;$ clearly is linear. It preserves the unit, $\;\widehat{U}=\eta\circ\epsilon=u \;$ since $\sim$ is unitary. Finally $\;\widehat{ }\;$ is compatible with convolution. It is sufficient to check this on the base elements $[[a,b]]\in \intv({\cal P},\sim)$. Using the definitions of convolution in Theorem \ref{convAlg} and of $\vartriangle$, $\triangledown$ in Proposition \ref{IntvBialg} we get
\begin{eqnarray*}
\widehat{\Phi} \star \widehat{\Psi}([[a,b]])&=& \triangledown\circ(\widehat{\Phi} \otimes \widehat{\Psi})\circ \vartriangle ([[a,b]]) \\
&=& \sum_{x\in[a,b]}
\triangledown(\widehat{\Phi}([[a,x]])\,\otimes\widehat{\Psi}([[x,b]])) \\
&=& \sum_{x\in[a,b]}
\Phi(a,x) \Psi(x,b)\;\triangledown([[a,x]]\otimes[[x,b]]) \\
&=& \Phi\star\Psi(a,b)\;[[a,b]] \\
&=& \widehat{\Phi\star\Psi}([[a,b]])\,.
\end{eqnarray*}
For the fourth equation we used the definition of $\star$ in ${\cal I}({\cal P},\sim)$ and property (\ref{1xinIntv}) of $\triangledown$. The reader should draw a diagram like in the proof of Proposition \ref{ConvMorph}.
\end{proof}

The constant function $1$ on $\intv({\cal P},\sim)$ is called the {\bf zeta function} of $({\cal P},\sim)$ and denoted $\Zeta$. It is invertible (Proposition \ref{IncidAlgInv}) and it's inverse $\Mu:=\Zeta^{\star -1}$ is called the {\bf Möbius function}. These names had been given (see \cite{Rota}) in analogy to the classical case ${\cal P}=(\N,\mid)$ (see Example \ref{Nteilt}).

The image $\widehat{\Zeta}$ of the zeta function in the convolution algebra is the identity map $\id_L$ and
\begin{eqnarray}
\label{Zsquare}
\widehat{\Zeta} \star \widehat{\Zeta} = \triangledown\circ(\id_L\otimes \id_L)\circ \vartriangle =\triangledown\circ\vartriangle
\end{eqnarray}
is the Hopf square map (see (\ref{count})).

\begin{corollary}
\label{IncidHopf}
Under the assumptions of Theorem \ref{IsubH}\, the m-weak interval bialgebra $(L({\cal P},\sim),\triangledown,\eta;\vartriangle, \epsilon)$ (see Proposition \ref{IntvBialg}) is a m-weak Hopf algebra. The antipode is $S = \widehat{\Mu}$.
\end{corollary}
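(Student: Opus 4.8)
The plan is to notice that the m-weak bialgebra structure of $(L({\cal P},\sim),\triangledown,\eta;\vartriangle,\epsilon)$ is already granted by Proposition \ref{IntvBialg}, so the whole task reduces to exhibiting an antipode: by the definition of a m-weak Hopf algebra I must show that the unit $\id_L$ of the composition algebra is invertible for the convolution $\star$ in ${\cal H}({\cal P},\sim)$, and that its inverse is $\widehat{\Mu}$. The strategy is to transport the invertibility of the zeta function from inside the incidence algebra across the algebra monomorphism $\Phi\mapsto\widehat{\Phi}$ provided by Theorem \ref{IsubH}.

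First I would collect the two facts furnished by Theorem \ref{IsubH} and the remarks surrounding it: the map $\;\widehat{\ }\;:{\cal I}({\cal P},\sim)\rightarrow{\cal H}({\cal P},\sim)$ is a monomorphism of algebras (so it preserves $\star$ and sends the unit $U$ to $u$), and the image of the zeta function is the identity, $\widehat{\Zeta}=\id_L$. Next I would invoke Proposition \ref{IncidAlgInv}: since $\Zeta([[a,a]])=1\neq 0$ for every $a\in P$, the zeta function is invertible in ${\cal I}({\cal P},\sim)$, and its two-sided inverse is by definition the M�bius function $\Mu=\Zeta^{\star-1}$, so that $\Zeta\star\Mu=U=\Mu\star\Zeta$.

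Applying the algebra morphism $\;\widehat{\ }\;$ to this pair of identities, and using that it intertwines the two convolutions and sends $U$ to $u$, I obtain $\id_L\star\widehat{\Mu}=\widehat{\Zeta}\star\widehat{\Mu}=\widehat{\Zeta\star\Mu}=\widehat{U}=u$, and symmetrically $\widehat{\Mu}\star\id_L=u$. Thus $\id_L$ admits the two-sided convolution inverse $\widehat{\Mu}\in\widehat{\cal I}({\cal P},\sim)\subseteq{\cal H}({\cal P},\sim)$; since a two-sided inverse in the monoid $(H,\star,u)$ is unique, this forces $S=\id_L^{\star-1}=\widehat{\Mu}$, which is exactly the assertion that $(L({\cal P},\sim),\triangledown,\eta;\vartriangle,\epsilon)$ is a m-weak Hopf algebra with antipode $\widehat{\Mu}$.

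I expect no genuine obstacle here: the only general principle used is that an algebra morphism carries a two-sided inverse to a two-sided inverse, which is immediate from $\widehat{\Zeta\star\Mu}=\widehat{\Zeta}\star\widehat{\Mu}$. All the real work — that $\;\widehat{\ }\;$ is an algebra embedding satisfying property (\ref{1xinIntv}), and that $\widehat{\Zeta}=\id_L$ — was already carried out in Theorem \ref{IsubH} and the discussion of the Hopf square map; the corollary simply assembles these pieces.
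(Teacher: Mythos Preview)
Your proposal is correct and follows essentially the same route as the paper's own proof: both use that $\widehat{\ }$ is an algebra morphism (Theorem~\ref{IsubH}) and $\widehat{\Zeta}=\id_L$ to transport the identity $\Mu=\Zeta^{\star-1}$ from ${\cal I}({\cal P},\sim)$ into ${\cal H}({\cal P},\sim)$, yielding $\widehat{\Mu}=\id_L^{\star-1}=S$. Your version is simply a more detailed unpacking of the same one-line argument.
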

Especially, the incidence algebra of a finite Poset ${\cal P}=(P,\preceq)$ with property (\ref{1xinIntv}) is a m-weak Hopf algebra with antipode $S = \widehat{\Mu}$. \\

\begin{proof}
$\Mu=\Zeta^{\star-1}$ and $\widehat{\Zeta}=\id_L$ imply $\widehat{\Mu}=\widehat{\Zeta^{\star-1}}=\widehat{\Zeta}^{\star-1}=\id_L^{\star-1}$ and this is the antipode $S$.
\end{proof}

$S$ is a bialgebra morphism iff
$$S\circ\triangledown = \triangledown\circ (S\otimes S)\,,\quad \vartriangle \circ S = (S\otimes S)\circ \vartriangle\,,$$
i.e.\ iff the following diagram commutes.
\begin{center}
\begin{minipage}{6cm}
\xymatrix@=3em{
{L}
\ar[r]^{S} \ar@/_ 1cm/[dd]_{\widehat{\Zeta}^{\star 2}}
\ar[d]^{\vartriangle}
& {L}
\ar[d]^{\vartriangle} \ar@/^ 1cm/[dd]^{\widehat{\Zeta}^{\star 2}}
 \\
{L\otimes L} \ar[d]^{\triangledown}
\ar[r]^{S\otimes S}
& {L\otimes L}  \ar[d]^{\triangledown}
 \\
L
\ar[r]^{S}
& L
 }
\end{minipage}
\end{center}
In general this is not the case but all paths from north west to south east give the same application,
\begin{eqnarray*}
S\circ\triangledown\circ\vartriangle = \triangledown\circ(S\otimes S)\circ\vartriangle = S \star S =          \triangledown\circ\vartriangle\circ S\,.
\end{eqnarray*}

The linear space $K^{\mbox{\small Intv}({\cal P},\sim)}$ which underlies the incidence algebra ${\cal I}({\cal P},\sim) = (K^{\mbox{\small Intv}({\cal P},\sim)},\star,U)$ supports a second algebra ${\cal A}_\circ({\cal P},\sim) := (K^{\mbox{\small Intv}({\cal P},\sim)},\cdot\,,\Zeta)$ with $\cdot$ denoting usual multiplication of functions and the constant function $\Zeta\equiv 1$ being the unit. The application $\;\widehat{ }\;$ transforms $\Zeta$ in $\id_L$ and the product to composition,
$$
\widehat{\Phi\cdot\Psi} = \widehat{\Phi}\circ\widehat{\Psi}\,.
$$
The image\footnote{the underlying linear spaces of $\widehat{\cal A_\circ}({\cal P},\sim)$ and $\widehat{\cal I}({\cal P},\sim)$ are identical.}  $\;\widehat{\cal A_\circ}({\cal P},\sim)\subseteq H=\Hom_K(L,L)\,$, $L=L({\cal P},\sim)$, of this algebra is a subalgebra of the composition algebra $(H,\circ, \id_L)$. Even more it becomes a bialgebra, but not canonically.

\begin{proposition}
\label{compBialg}
Let ${\cal P}=(P,\preceq)$ be a locally finite poset and
$\sim$ a bialgebra compatible equivalence relation on $\intv({\cal P})$.
There exists a unitary base ${\cal B}$ of $\widehat{\cal A_\circ}({\cal P},\sim)$ and for any such base
$(\widehat{\cal A_\circ}({\cal P},\sim),\circ\,,\id_L;\vartriangle_{{\cal B}},\epsilon_{{\cal B}})$ is a bialgebra, where
\begin{eqnarray*}
\vartriangle_{{\cal B}}(\widehat{b}) &=& \widehat{b}\otimes \widehat{b}\;,\qquad \widehat{b}\in {\cal B}\,, \\
\epsilon_{\cal B}(\widehat{\Phi}) &=& \Phi([[a,a]])\;,\qquad \Phi\in K^{\mbox{\small Intv}({\cal P},\sim)}\;.
\end{eqnarray*}
% FALSCH: In the finite dimensional case, coproduct and counit don't depend on the base.
\end{proposition}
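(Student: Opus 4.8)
The plan is to reduce the whole statement to Proposition \ref{composBialg}, which already manufactures a bialgebra on any unitarily-based subalgebra of a composition algebra. That proposition needs two ingredients: a linear space $L$ carrying compatible $\eta$ and $\epsilon$ (condition (\ref{bi4})), and a subalgebra $H_{\cal B}$ of $(\Hom_K(L,L),\circ,\id_L)$ possessing a unitary base. First I would take $L=L({\cal P},\sim)$ with the unit $\eta$ and counit $\epsilon$ of the interval bialgebra from Proposition \ref{IntvBialg}. Condition (\ref{bi4}), $\epsilon\circ\eta=\id_K$, is then immediate, since $\eta(1_K)=[[a,a]]$ and $\epsilon([[a,a]])=1$. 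For the subalgebra I take $H_{\cal B}=\widehat{\cal A_\circ}({\cal P},\sim)$, which the text preceding the proposition already identifies as a subalgebra of the composition algebra via $\widehat{\Phi\cdot\Psi}=\widehat{\Phi}\circ\widehat{\Psi}$ and $\widehat{\Zeta}=\id_L$.

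The only real work is producing a unitary base, i.e.\ a base whose elements $\widehat{b}$ satisfy $\widehat{b}\circ\eta=\eta$. Unravelling, $\widehat{b}([[a,a]])=b([[a,a]])\,[[a,a]]$, so the condition reads $b([[a,a]])=1_K$; thus I need a base of $\widehat{\cal A_\circ}$ lying in the affine set $H_1:=\{\widehat{\Phi}\mid \Phi([[a,a]])=1_K\}$. I would argue exactly as in Example \ref{AlgDual}. The linear functional $\widehat{\Phi}\mapsto\Phi([[a,a]])$ is well-defined, because $\;\widehat{ }\;$ is injective ($\widehat{\Phi}(I)=\Phi(I)\,I$ recovers $\Phi$), and it is nonzero, because it sends $\id_L=\widehat{\Zeta}$ to $\Zeta([[a,a]])=1$. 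Hence $H_1$ spans $\widehat{\cal A_\circ}$: any $\widehat{\Phi}$ with $\Phi([[a,a]])\neq 0$ is a scalar multiple of an element of $H_1$, and any $\widehat{\Phi}$ with $\Phi([[a,a]])=0$ is the difference $(\widehat{\Phi}+\id_L)-\id_L$ of two elements of $H_1$. A spanning subset contains a base, so a unitary base ${\cal B}$ exists.

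With both hypotheses in hand, Proposition \ref{composBialg} delivers the bialgebra $(\widehat{\cal A_\circ}({\cal P},\sim),\circ,\id_L;\vartriangle_{\cal B},\epsilon_{\cal B})$ with $\vartriangle_{\cal B}(\widehat{b})=\widehat{b}\otimes\widehat{b}$ and $\epsilon_{\cal B}(\widehat{b})=1_K$ for the chosen unitary base, and this holds for any such base. Finally I would verify the stated closed form of the counit by specializing the formula $\epsilon_{\cal B}(f)=\epsilon\circ f(1_L)$ from Proposition \ref{diagonal}: since $\widehat{\Phi}(1_L)=\widehat{\Phi}([[a,a]])=\Phi([[a,a]])\,[[a,a]]$ and $\epsilon([[a,a]])=1$, one gets $\epsilon_{\cal B}(\widehat{\Phi})=\Phi([[a,a]])$. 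This also exhibits $\epsilon_{\cal B}$ as independent of ${\cal B}$, whereas $\vartriangle_{\cal B}$ genuinely depends on the base, matching the ``not canonically'' remark.

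The main obstacle, such as it is, lies in the unitary-base construction: one must confirm that evaluation at $[[a,a]]$ is a nonzero functional on $\widehat{\cal A_\circ}$ and then run the Example \ref{AlgDual} spanning argument. Everything else is a direct invocation of Proposition \ref{composBialg} together with the bookkeeping identification of $\epsilon_{\cal B}$.
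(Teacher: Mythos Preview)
Your proposal is correct and follows essentially the same route as the paper. The paper invokes Example~\ref{AlgDual} to obtain a unitary base ${\cal B}_0$ of $L^*$ and then pushes it through the injective linear map $\;\widehat{ }\;$ to a unitary base of $\widehat{\cal A_\circ}({\cal P},\sim)$, whereas you rerun the same spanning argument directly inside $\widehat{\cal A_\circ}({\cal P},\sim)$; both then appeal to Proposition~\ref{composBialg}, and your additional verification of the closed form for $\epsilon_{\cal B}$ is a nice touch the paper leaves implicit.
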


\begin{proof}
From Example \ref{AlgDual} we know that there exists a unitary base ${\cal B}_0$ of $L^*$, that is $b\circ \eta(1_K) =1_K$ for all $b\in{\cal B}_0$. Then ${\cal B}:=\{\widehat{b}\mid b\in{\cal B}_0 \}$ is a base of $\widehat{\cal A}_\circ({\cal P},\sim)$ and $\widehat{b}\,([[a,a]])=b\,([[a,a]])\cdot [[a,a]]= [[a,a]]$ for all $\widehat{b}\in {\cal B}$. The assumptions of Proposition \ref{composBialg} are verified and we are done.
\end{proof}

In this context the following result is remarkable.

\begin{proposition}
\label{charIncidSubalg}
Let ${\cal P}=(P,\preceq)$ be a finite poset and $\cal{A}\subseteq {\cal I}({\cal P})$ a subalgebra of the incidence algebra. Then $\widehat{\cal A}\subseteq \;\widehat{\cal A_\circ}({\cal P},=)$ is a subalgebra of  the composition algebra iff there exists a $\vartriangle$-compatible equivalence relation $\sim$ on $\intv({\cal P})$ such that $\cal{A}= {\cal I}({\cal P},\sim)$.
\end{proposition}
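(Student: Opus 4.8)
The plan is to translate both sides of the asserted equivalence into concrete conditions on ${\cal A}$ viewed as a space of functions on $\intv({\cal P})$, and then to reduce the harder implication to a dimension count followed by a counting identity. First I would record the reformulation of the left-hand side. Since $\widehat{\ }$ is injective and, by the identities stated just before the proposition, carries the pointwise product to composition and the constant function $\Zeta$ to $\id_L$, the image $\widehat{\cal A}$ is a subalgebra of the composition algebra $(H,\circ,\id_L)$ if and only if ${\cal A}$ contains $\Zeta$ and is closed under pointwise multiplication, i.e.\ iff ${\cal A}$ is a unital subalgebra of $(K^{\mbox{\small Intv}({\cal P})},\cdot\,,\Zeta)$. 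So the proposition becomes: a $\star$-subalgebra ${\cal A}\subseteq{\cal I}({\cal P})$ is \emph{also} a pointwise subalgebra precisely when ${\cal A}={\cal I}({\cal P},\sim)$ for some $\vartriangle$-compatible $\sim$.

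The implication ``$\Leftarrow$'' is quick. If $\sim$ is $\vartriangle$-compatible, then the interval coalgebra ${\cal C}=(L({\cal P},\sim),\vartriangle,\epsilon)$ is a genuine coalgebra and, by Proposition \ref{IncCStern}, ${\cal I}({\cal P},\sim)={\cal C}^*$ is a $\star$-subalgebra of ${\cal I}({\cal P})$. Moreover the constant function $\Zeta$ is constant on $\sim$-classes, hence lies in ${\cal I}({\cal P},\sim)$, and the pointwise product of two functions that are constant on $\sim$-classes is again constant on classes. Thus ${\cal I}({\cal P},\sim)$ is a unital pointwise subalgebra, which is exactly the reformulated left-hand side.

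For ``$\Rightarrow$'' I first reconstruct the relation: assuming ${\cal A}$ is a unital pointwise subalgebra, define $I\sim J$ iff $\Phi(I)=\Phi(J)$ for all $\Phi\in{\cal A}$. Then $\sim$ is an equivalence relation and ${\cal A}$ sits inside the space of functions constant on $\sim$-classes, which is ${\cal I}({\cal P},\sim)$ as a subspace of ${\cal I}({\cal P})$. To upgrade this to equality I count dimensions. Each evaluation $\mathrm{ev}_I\colon{\cal A}\to K$ is a $K$-algebra character (it is $K$-linear and multiplicative, and nonzero since $\mathrm{ev}_I(\Zeta)=1$), and $\mathrm{ev}_I=\mathrm{ev}_J$ on ${\cal A}$ iff $I\sim J$; so the number of distinct characters obtained this way equals the number of $\sim$-classes. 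By the classical linear independence of distinct algebra characters these are independent in ${\cal A}^*$, giving $\#\{\text{classes}\}\le\dim_K{\cal A}$. Since also $\dim_K{\cal A}\le\dim_K{\cal I}({\cal P},\sim)=\#\{\text{classes}\}$, equality holds throughout and ${\cal A}={\cal I}({\cal P},\sim)$.

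It remains to show that this $\sim$ is $\vartriangle$-compatible, and this is where the real work and the main obstacle lie. Here I would use the standing hypothesis that ${\cal A}={\cal I}({\cal P},\sim)$ is $\star$-closed. Fix $[a,b]\sim[a',b']$ and, for interval classes $C,D$, let $e_C,e_D\in{\cal A}$ be their indicator functions; evaluating $e_C\star e_D$ at $[a,b]$ and at $[a',b']$ yields
$$
N^{[a,b]}_{C,D}:=|\{x\in[a,b]\mid [a,x]\in C,\ [x,b]\in D\}| \;=\; N^{[a',b']}_{C,D}
$$
as elements of $K$. The sets counted by $N^{[a,b]}_{C,D}$ partition $[a,b]$ as $(C,D)$ ranges over pairs of classes, and likewise for $[a',b']$; a bijection $f\colon[a,b]\to[a',b']$ with $[a,x]\sim[a',f(x)]$ and $[x,b]\sim[f(x),b']$ exists iff these blocks have matching cardinality, i.e.\ iff the counts agree as integers. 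The main obstacle is exactly the passage from equality in $K$ to equality of integers needed to guarantee the bijection; it is immediate in characteristic zero (the setting of all the substantive examples), and I would either assume $\mathrm{char}\,K=0$ or note that the counts are bounded by $|[a,b]|$ and hence recoverable in $K$ once the field is large enough. Granting this, assembling the block-by-block bijections produces the required $f$, so $\sim$ is $\vartriangle$-compatible and the proof is complete.
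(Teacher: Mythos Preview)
The paper does not actually prove this proposition: its entire proof reads ``See e.g.\ \cite{SpiegelD} Proposition 1.3.9.'' So there is no argument in the paper to compare against, and your proposal supplies what the paper omits.

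Your argument is correct and is in fact the standard one. The reformulation via $\widehat{\Phi\cdot\Psi}=\widehat{\Phi}\circ\widehat{\Psi}$ and $\widehat{\Zeta}=\id_L$ is exactly right, the ``$\Leftarrow$'' direction is immediate, and for ``$\Rightarrow$'' your dimension count via linear independence of distinct algebra characters (Dedekind--Artin) is clean and works over any field. The verification of $\vartriangle$-compatibility through the section numbers $N^{[a,b]}_{C,D}$ is also the right idea.

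You have correctly isolated the one genuine subtlety: the equality $N^{[a,b]}_{C,D}=N^{[a',b']}_{C,D}$ is obtained only as an equality in $K$, and promoting it to an equality of natural numbers (needed to build the bijection $f$) requires $\mathrm{char}\,K=0$ or at least $\mathrm{char}\,K>|P|$. This is not a defect in your argument but a feature of the statement itself; the result as phrased in the paper (over an arbitrary field $K$) can indeed fail in small positive characteristic, and the reference \cite{SpiegelD} imposes hypotheses on the base ring to avoid this. Your flagging of the issue, with the proposed restriction, is exactly the right way to handle it.
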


\begin{proof}
See e.g. \cite{SpiegelD} Proposition 1.3.9.
\end{proof}

%\newpage
\section{Examples and incidence bialgebra}
\label{Examples}

Proposition \ref{compBialg} shows, that the dual of the algebra $(L,\triangledown,\eta)$ is not the appropriate coalgebra (see Example \ref{AlgDual}) in order to make the incidence algebra a bialgebra. Rather, like in the case of finite dimensions (Proposition \ref{IntvIncidAlg}),  one has to use the comultiplcation $\vartriangle$ on $L$ to define the suitable comultiplication on the incidence algebra. The next examples will pave the way. Notice that all examples presented here are commutative and cocommutative.

\begin{example}
\label{SubsetPosetPoly}
Let $K=\C$. We resume Example \ref{SubsetPoset2} of the poset ${\cal P}=({\cal P}_f(\Omega),\subseteq)$ with countable $\Omega$ and the relation $\sim_2$. We denoted with $I_n$ the interval class $[[A,B]]$, with $|B\setminus A|=n$ and have seen, that
\begin{eqnarray*}
\;(L({\cal P},\sim_2),\triangledown,\eta;\vartriangle,\epsilon) &\rightarrow& (\C[X],\cdot\,,1;\vartriangle_2,\epsilon_2) \\
I_n &\mapsto& \frac{1}{n!}\, X^n
\end{eqnarray*}
is an algebra isomorphism. One easily checks that it is a coalgebra morphism, too, whence a bialgebra morphism. So the interval bialgebra is a Hopf algebra with antipode $S(I_n)=(-1)^n I_n$ (Example \ref{Polynbialg}).
This isomorphism, Proposition \ref{IncCStern} and Example \ref{PowerSeries} give us the algebra isomorphism
\begin{eqnarray*}
{\cal I}({\cal P},\sim_2) &\rightarrow& ((\C[X],\vartriangle_2,\epsilon_2)^*,\star_2\,,1) \simeq (\C[[X]],\cdot\,,1) \\
\Phi &\mapsto& \widetilde{\Phi}\;:=\;\sum_{n=0}^\infty \frac{\Phi(I_n)}{n!} X^n
\end{eqnarray*}
Especially $\widetilde{\Zeta}=e^X$ is the exponential function and  $\widetilde{\Mu}=e^{-X}$ it's inverse function. Then the Möbius function is $\Mu(I_n)=(-1)^n$.

We regard another remarkable incidence function, $\Phi(I_n)=\frac{1}{n+1}$ (see \cite{Denneberg and Grabisch}). It's image $\widehat{\Phi}\in {\cal H}({\cal P},\sim_2)$ is neither an algebra morphism nor a coalgebra morphism. One easily computes $X \widetilde{\Phi} = \widetilde{\Zeta} -1$, so that $\widetilde{\Phi}^{-1}$ is the exponential generating function
$\frac{X}{e^X -1}=\;\sum_{n=0}^\infty \frac{\beta_n}{n!} X^n$
of the Bernoulli numbers $(\beta_n)_{n\in\N_0}$, whence $\Phi^{\star-1}(I_n)=\beta_n$.

The comultiplication $\vartriangle_2$ and the counit $\epsilon_2$ of the bialgebra of polynomials can be extended to the algebra of formal power series,
%by means of the $(X)$-adic topology. Then (see Example \ref{PolynCoalg})
$$
\vartriangle_2(f)(X,Y) = f(X+Y)\;,\quad \epsilon_2(f)=f(0) \qquad \mbox{for } f\in \C[[X]]\,,
$$
but the question is, if the power series $\vartriangle_2(f)$ in two variables is contained in $\,\C[[X]]\otimes \C[[X]]$, which is only a subalgebra of $\C[[X,Y]]$.
The answer is affirmative:

The linear space $\C[[X]]$ is the directed colimit\footnote{{\it Directed colimits} are often called {\it direct limit} or {\it inductive limit}.} of the directed poset (with set inclusion as ordering) of the linear spaces $D_n$, $n\in\N_0$, of polynomials of degree $\le n$. Since the functor $M\mapsto M\otimes M$ on the category of $K$-linear spaces preserves directed colimits (\cite{Porst} Proposition 8), $\C[[X]]\otimes \C[[X]]$ is the colimit of $(D_n\otimes D_n)_{n\in\N_0}$. The universal property of directed colimits implies that the linear applications $\vartriangle_2: D_n\rightarrow D_n\otimes D_n$, $n\in \N_0$, have a directed colimit. Use the $(X)$-adic and $(X,Y)$-adic topologies to show that this application coincides with $\vartriangle_2: \C[[X]]\rightarrow \C[[X,Y]]\supset \C[[X]]\otimes \C[[X]]$ as defined above.

$(\C[[X]],\cdot\,,1;\vartriangle_2,\epsilon_2)$ is a bialgebra and like in Examples \ref{Polynbialg} and \ref{Polynbialg2} one verifies that it is a Hopf algebra with antipode $S_2$, which is the bialgebra morphism definded by $S_2(f)(X)=f(-X)$. Then $S_2(\widetilde{\Zeta})=\widetilde{\Mu}$.

Summarizing, in the present example the incidence algebra is a Hopf algebra and the antipode maps the zeta function to the Möbius function and vice versa.
\end{example}

\begin{example}
\label{Nklgl}
We resume Example \ref{Nle} of the poset ${\cal P}=(\N_0,\le)$ with the relation $\sim_n$. Denote with $I_n$ the interval class $[[a,b]]$, with $|b-a|=n$. Then one easily checks that
\begin{eqnarray*}
(L((\N,\le),\sim_n),\triangledown,\eta;\vartriangle,\epsilon) &\rightarrow& (\C[X],\cdot\,,1;\vartriangle_1,\epsilon_1) \\
I_n &\mapsto& X^n
\end{eqnarray*}
is a bialgebra isomorphism. So the interval bialgebra is a m-weak Hopf algebra with antipode $S$, $S(I_0)=1$, $S(I_1)=-1$ and $S(I_n)=0$ for $n>1$ (Example \ref{Polynbialg}), which is the Möbius function $\widehat{\Mu}$ by Corollary \ref{IncidHopf}.

This isomorphism induces the algebra isomorphism
\begin{eqnarray*}
{\cal I}((\N,\le),\sim_n) &\rightarrow& ((\C[X],\vartriangle_1,\epsilon_1)^*,\star_1\,,1) \simeq (\C[[X]],\cdot\,,1) \\
\Phi &\mapsto& \widetilde{\Phi}\;:=\;\sum_{n=0}^\infty \Phi(I_n) X^n
\end{eqnarray*}
Especially $\widetilde{\Zeta}=\sum_{n=0}^\infty X^n$ is the geometric series and  $\widetilde{\Mu}=1-X$ it's inverse.

Like in the last example, $\vartriangle_1$ and $\epsilon_1$ can be extended from polynomials to power series and we get a m-weak bialgebra $(\C[[X]],\cdot\,,1;\vartriangle_1,\epsilon_1)$. It is a m-weak Hopf algebra with antipode $S_1$, definded, like for the polynomial m-weak Hopf bialgebra $(\C[X],\cdot\,,1;\vartriangle_1,\epsilon_1)$, by
$S_1\left(\sum_{n=0}^\infty \varphi_n X^n\right):= \varphi_0 - \varphi_1 X$.
Again $S_1(\widetilde{\Zeta})=\widetilde{\Mu}$, but $S_1\circ S_1=\id$ fails (cf. Corrollary \ref{ScircS}).
\end{example}

\begin{example}
\label{Nteilt}
We resume Example \ref{NDiv} of the m-weak interval bialgebra $\;(L((\N,|\,),\sim_n),\triangledown,\eta; \vartriangle, \epsilon) \;$ of the divisibility order with the natural equivalence relation $\sim_n$.

Here we need the $\C$-linear space $\C\langle s\rangle$ spanned by the linearly independent functions $(b_n)_{n\in\N}$,
$$
b_n : \C \rightarrow \C\,, \qquad b_n(s):=n^{-s}\,, s\in\C\,.
$$
Notice that
$$
b_m\cdot b_n=b_{mn}\;, \mbox{ i.e.}\quad \triangledown(b_m\otimes b_n)=b_{mn}\,.
$$
$\C\langle s\rangle$ is a subalgebra of the algebra $\C\langle\langle s\rangle\rangle$ of formal {\bf Dirichlet series}
\begin{eqnarray*}
\varphi(s) &=& \sum_{n=1}^\infty \varphi_n n^{-s}\,,\qquad \varphi_n \in\C\,.
\end{eqnarray*}
We call the functions in $\C\langle s\rangle$ {\bf Dirichlet polynomials}. Like for power series there is an algebra monomorphism of $\C\langle\langle s\rangle\rangle \otimes \C\langle\langle s\rangle\rangle$ to the algebra $\C\langle\langle s,t\rangle\rangle$ of formal Dirichlet series in two variables,
\begin{eqnarray}
\label{2Variable}
\C\langle\langle s\rangle\rangle \otimes \C\langle\langle s\rangle\rangle \hookrightarrow \C\langle\langle s,t\rangle\rangle \,,\qquad m^{-s} \otimes n^{-s} \mapsto m^{-s}n^{-t}\,.
\end{eqnarray}

Now we get algebra isomorphisms
\begin{eqnarray*}
(L((\N,|\,),\sim_n),\triangledown,\eta) &\rightarrow & (\C\langle s \rangle,\cdot\,,1)\;, \qquad [[1,n]] \mapsto b_n\,,  \\
{\cal I}((\N,|\,),\sim_n) &\rightarrow & (\C\langle\langle s\rangle\rangle,\cdot\,,1)\;, \qquad \Phi \mapsto \widetilde{\Phi}:=\sum_{n=1}^\infty \Phi([[1,n]])\, n^{-s}\;.
\end{eqnarray*}
The image of $\Zeta \in {\cal I}((\N,|\,),\sim_n)$ is the Riemann zeta function $\zeta(s)=\sum_{n=1}^\infty n^{-s}$ and the image of $\Mu=\Zeta^{\star-1}$ is the Dirichlet series $\frac{1}{\zeta(s)}=\sum_{n=1}^\infty \mu_n\, n^{-s}$, where the coefficients are the values of the classical Möbius function,
$$
\mu_n = \left\{ \begin{array}{l@{\quad}l} (-1)^k & \mbox{
if } n=p_1p_2 \dots p_k \mbox{ with primes } p_1 < p_2 < \dots <
p_k
\\
0 & \mbox{ else}
\end{array}
\right..
$$

Like in the preceding examples the coalgebra structure
\begin{eqnarray*}
\vartriangle(b_n) &=& \sum_{d|n} b_d\otimes b_{\frac{n}{d}} \,,\qquad
\epsilon(b_n) \;:=\; \left\{
\begin{array}{ll}
1 & \mbox{if} \quad  n=1 \\
0 & \mbox{else}
\end{array} \right.\,
\end{eqnarray*}
on $L((\N,|\,),\sim_n)$ transports to $\C\langle s\rangle$ and extends \, (using $\emph{m}$-adic topologies, the maximal ideal $\emph{m}$ of $\C\langle\langle s\rangle\rangle$ is generated by $b_2,b_3\dots$)\; to $\C\langle\langle s\rangle\rangle$.
Using (\ref{2Variable}), $\vartriangle$ can be written as
\begin{eqnarray*}
\vartriangle(b_n)(s,t) &=& \sum_{d|n} d^{-s} (\frac{n}{d})^{-t} \;=\; n^{-t} \sum_{d|n} d^{-(s-t)}\,.
\end{eqnarray*}
Especially $\vartriangle(\zeta)(s,t)=\zeta(s)\cdot\zeta(t)$ and $\vartriangle(\zeta^{-1})(s,t)=\zeta^{-1}(s)\cdot\zeta^{-1}(t)$. This holds despite the fact, that we have only a m-weak bialgebra $(\C\langle\langle s\rangle\rangle,\cdot\,,1;\vartriangle,\epsilon)$.

Setting $t=s$ we get $\;\vartriangle(n^{-s})(s,s)=d_n\, n^{-s}$ with $d_n$ denoting the number of divisors of $n$ (cf.\ (\ref{count})).

Finally we look at the Hopf algebra structure of the m-weak interval bialgebra and the  m-weak incidence bialgebra of the divisibility ordering w.r.t.\ $\sim_n$.
The m-weak interval bialgebra $(L((\N,|\,),\sim_n),\triangledown,\eta;
\vartriangle, \epsilon)$ is a m-weak Hopf algebra by Corollary \ref{IncidHopf} with antipode $\widehat{\Mu}$, where $\Mu([[1,n]])=\mu_n$ for $n\ge 1$.
The m-weak bialgebra $(\C\langle\langle s\rangle\rangle,\cdot\,,1; \vartriangle, \epsilon)$ has antipode $S$, given by $S(\varphi)= \sum_{n=1}^\infty \varphi_n \mu_n b_n$. The proof is straightforward, verifying $\id\star S=u$ and $S\star \id=u$ using
\begin{eqnarray*}
\sum_{d|n} \mu_d \;:=\; \left\{
\begin{array}{ll}
1 & \mbox{if} \quad  n=1 \\
0 & \mbox{else}
\end{array} \right.\,.
\end{eqnarray*}
This is just the equation $\Zeta\star \Mu = U$ in the incidence algebra of the subposet $D_n:=\{d\in \N \mid d|n\}\;$ of $(\N,\mid\,)$ with the equivalence relation $\sim_n$ restricted to $D_n$. On $\intv((D_n,\mid\,),\sim_n)$ the zeta and Möbius functions are the restrictions of the zeta and Möbius functions on $\intv((\N,\mid\,),\sim_n)$ (Proposition \ref{IncMorph}).
\end{example}

In all these examples the incidence algebra is a function space and the comultiplicaton, introduced on it, transforms a function of one variable in a function of two variables. In the last example we have seen, that inserting a specific value for the new covariable can reveal an interesting combinatorial function of the respective poset. Here is another example.

\begin{example}
Inserting in Example \ref{SubsetPosetPoly} $\;Y=-a \in \C$ in the coproduct of $f\in \C[[X]]$ gives the power series expansion $\vartriangle_2(f)(X,-a)=f(X-a)$ of the incidence function $f$ at the point $a$.

Setting $Y=X$ gives $\vartriangle_2(X^n)(X,X)=2^n\,X^n$. Here $2^n$ is the number of elements in any interval of the class $I_n$ (cf.\ (\ref{count})).
\end{example}

How to construct generally a comultiplication on $\infty$-dimensional incidence algebras? The examples suggest to extend the finite dimensional case via directed colimits. We leave this to be elaborated.

%\newpage
\section{Morphisms of incidence algebras}

Given two locally finite posets ${\cal P}_i=(P_i,\preceq_i)$ with
$\vartriangle$-compatible equivalence relations $\sim_i$ on $\intv({\cal P}_i)$, $i=1$, $2$. A linear application
$$\gamma: L_1 \rightarrow L_2$$
 between the corresponding interval spaces $L_i:=L({\cal P}_i,\sim_i)$ induces by dualisation a linear morphism between the incidence algebras ${\cal I}_i:={\cal I}({\cal P}_i,\sim_i)$,
\begin{eqnarray*}
\gamma^* : {\cal I}_2 &\rightarrow& {\cal I}_1\;,\qquad \Phi_2 \mapsto \Phi_2 \circ \gamma\;.
\end{eqnarray*}
If $\gamma$ is injective (surjective), then $\gamma^*$ is surjective (injective).
The question is, if $\gamma^*$ is an algebra morphism.

\begin{proposition}
\label{IncMorph}
Given two locally finite posets ${\cal P}_i=(P_i,\preceq_i)$ with
$\vartriangle$-compatible equivalence relations $\sim_i$ on $\intv({\cal P}_i)$, $i=1$, $2$, and a coalgebra morphism $\gamma: {\cal C}_1 \rightarrow {\cal C}_2$ between the interval coalgebras ${\cal C}_i=(L({\cal P}_i,\sim_i),\vartriangle_i,\epsilon_i)$, then $\gamma^*$ is an algebra morphism.
\end{proposition}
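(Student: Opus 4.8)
The plan is to recognize $\gamma^*$ as the linear dual of the coalgebra morphism $\gamma$ and then to invoke the general fact, already packaged in Proposition \ref{ConvMorph}, that the dual of a coalgebra morphism is an algebra morphism.

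First I would set up the identifications. By Proposition \ref{IncCStern} each incidence algebra ${\cal I}_i$ is canonically isomorphic to the dual algebra ${\cal C}_i^*$ of its interval coalgebra, and by Example \ref{CoalgDual} (equivalently Theorem \ref{convAlg} applied with the one-dimensional algebra ${\cal A}=K$) this dual algebra is exactly the convolution algebra ${\cal H}({\cal C}_i,K)=(\Hom_K(L_i,K),\star,\epsilon_i)$. Under these identifications an incidence function $\Phi_2\in{\cal I}_2$ becomes the linear form $\overline{\Phi}_2\in\Hom_K(L_2,K)$, and the map $\gamma^*$, $\Phi_2\mapsto\Phi_2\circ\gamma$, becomes precisely precomposition with $\gamma$, i.e.\ $f\mapsto f\circ\gamma$ for $f\in\Hom_K(L_2,K)$.

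Now I would apply Proposition \ref{ConvMorph} with ${\cal A}={\cal A}'=K$ and $\alpha=\id_K$ (which is trivially an algebra morphism), taking the coalgebra morphism to be $\gamma$ itself. Matching the contravariance, $\gamma:{\cal C}_1\rightarrow{\cal C}_2$ plays the role of the coalgebra morphism $C'\rightarrow C$ there with $C'=L_1$ and $C=L_2$, so that $h:\Hom_K(L_2,K)\rightarrow\Hom_K(L_1,K)$, $f\mapsto\id_K\circ f\circ\gamma=f\circ\gamma$, is an algebra morphism from ${\cal H}({\cal C}_2,K)$ to ${\cal H}({\cal C}_1,K)$. This $h$ is exactly $\gamma^*$, so the claim follows.

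If one prefers a self-contained verification rather than citing Proposition \ref{ConvMorph}, the two required identities follow directly from the defining properties of a coalgebra morphism, $\vartriangle_2\circ\gamma=(\gamma\otimes\gamma)\circ\vartriangle_1$ and $\epsilon_2\circ\gamma=\epsilon_1$. For the unit, the unit of ${\cal C}_i^*$ is $\epsilon_i$, so $\gamma^*(\epsilon_2)=\epsilon_2\circ\gamma=\epsilon_1$. For the product, writing the convolution in ${\cal C}_i^*$ as $f\star g=\triangledown_K\circ(f\otimes g)\circ\vartriangle_i$, one computes
\begin{eqnarray*}
\gamma^*(\Phi_2\star\Psi_2) &=& \triangledown_K\circ(\Phi_2\otimes\Psi_2)\circ\vartriangle_2\circ\gamma \\
&=& \triangledown_K\circ(\Phi_2\otimes\Psi_2)\circ(\gamma\otimes\gamma)\circ\vartriangle_1 \\
&=& \triangledown_K\circ((\Phi_2\circ\gamma)\otimes(\Psi_2\circ\gamma))\circ\vartriangle_1 \\
&=& \gamma^*(\Phi_2)\star\gamma^*(\Psi_2)\,,
\end{eqnarray*}
the comultiplication compatibility of $\gamma$ being used in the second step. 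Since no genuine compatibility between multiplication and comultiplication is at stake here --- only the coalgebra structure of the ${\cal C}_i$ is used --- there is no real obstacle; the only points requiring care are bookkeeping the contravariance in the coalgebra slot and confirming that the convolution product on ${\cal I}_i$ really is the dual-coalgebra product, which is the content of Proposition \ref{IncCStern}.
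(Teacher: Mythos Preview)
Your proof is correct and follows exactly the paper's approach: apply Proposition~\ref{ConvMorph} with $\alpha=\id_K$ to obtain an algebra morphism ${\cal C}_2^*\rightarrow{\cal C}_1^*$, and then invoke Proposition~\ref{IncCStern} to identify ${\cal C}_i^*$ with ${\cal I}_i$. The additional self-contained verification you give is a fine unpacking of that same argument.
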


\begin{proof}
Applying Proposition \ref{ConvMorph} with $\alpha=\id_K$ we get an algebra morphism ${\cal C}_2^* \rightarrow {\cal C}_1^*$. By Proposition \ref{IncCStern} we are done.
\end{proof}

\begin{example}
\label{RefinemMorph}
Let ${\cal P}$ be a locally finite poset with two bialgebra compatible equivalence relations $\sim_1$ and $\sim_2$, the latter being a refinement of the first. The natural projection
$
\intv({\cal P},\sim_1) \rightarrow \intv({\cal P},\sim_2)
$
induces a linear application
$$\gamma: L({\cal P},\sim_1) \rightarrow L({\cal P},\sim_2)\,,\quad [[a,b]]_{\sim_1}\mapsto[[a,b]]_{\sim_2}$$
This is a coalgebra morphism so that by Proposition \ref{IncMorph}
\begin{eqnarray*}
\gamma^* : {\cal I}({\cal P},\sim_2) &\rightarrow& {\cal I}({\cal P},\sim_1) \,,\quad \Phi\mapsto \Phi\circ\gamma
\end{eqnarray*}
is an algebra morphism. It is a monomorphism since $\gamma$ is surjective.
\end{example}

\begin{example}
\label{Power2-1}
Regard Example \ref{RefinemMorph} with the poset $({\cal P}(\{1,\dots,n\},\subseteq)$ and the equivalence relations $\sim_1$ (Example \ref{SubsetPoset}) and $\sim_2$ (Example \ref{SubsetPoset2}). Then we get
\begin{eqnarray*}
\gamma^* : \C[X]/(X^{n+1}) & \hookrightarrow & \C[X_1,\dots,X_n]/(X_1^2,\dots,X_n^2) \\
X & \mapsto & X_1 + \dots + X_n \mod (X_1^2,\dots,X_n^2)\,.
\end{eqnarray*}
\end{example}

\begin{example}
We denote with $\Prim$ the set of prime numbers. Let ${\cal P}_1$ be the poset $({\cal P}_f(\Prim),\subseteq)$ with the relation $\sim_1$  (Example \ref{SubsetPoset}) and ${\cal P}_2 = (\N,|)$ with $\sim_n$  (Example \ref{Nteilt}). They are closely related through the poset embedding
\begin{eqnarray}
\label{PfN} T: ({\cal P}_f(\Prim),\subseteq) \hookrightarrow
(\N,|)\,, \quad P \mapsto \prod_{p \in P} p\,, \quad \emptyset
\mapsto 1\,,
\end{eqnarray}
assigning to any finite set $P \subset
\Prim$ of primes their product. The image of this application is
$$
Q:=\{n \in {\N} \mid p^2\not|\, n \mbox{ for
all } p\in{\Prim}\} =\{1,2,3,5,6,7,10,11,13, \dots \}\,,
$$
the {\bf set of squarefree numbers}. $({\cal P}_f(\Prim), \subseteq)$ and
$(Q,|)$ are canonically isomorphic posets.
Furthermore $(Q,|)$ is a subposet of $(\N,|)$ with the property
that any interval $[x,y]$ in $Q$ is also an interval in $\N$ since
a divisor of a squarefree number is itself squarefree. Also the equivalence relations $\sim_1$ on $({\cal P}_f(\Prim),\subseteq)$ (Example \ref{SubsetPoset}) and $\sim_n$ on $(Q,|)$ (Example \ref{NDiv}) are the same modulo the isomorphism. Then the injection
$$
\intv(({\cal P}_f(\Prim),\subseteq),\sim_1) \hookrightarrow \intv((\N,|),\sim_n) \,.
$$
induces a linear application between the spans of these interval sets,
$$
\gamma: L(({\cal P}_f(\Prim),\subseteq),\sim_1) \hookrightarrow L((\N,|),\sim_n)\;.
$$
Notice that this is no algebra morphism, since the domain contains zero divisors (see Example \ref{SubsetPoset}) and the range not. But $\gamma$ is a coalgebra morphism and by Proposition \ref{IncMorph} we get an algebra morphism
\begin{eqnarray*}
\gamma^* : {\cal I}((\N,|),\sim_n) &\rightarrow& {\cal I}(({\cal P}_f(\Prim),\subseteq),\sim_1) \simeq {\cal I}((Q,|),\sim_n) \,,\quad \Phi\mapsto\Phi|_Q\,.
\end{eqnarray*}
It is an epimorphism since $\gamma$ is injective. Furthermore $\gamma^* \circ\gamma=\id$.
\end{example}

%\newpage

\end{document}